\documentclass[12pt, reqno]{amsart}
\usepackage{amsthm, amsopn, times}
\usepackage{amssymb}
\usepackage{amsmath}
\usepackage{mathtools}
\usepackage{bbm}
\usepackage{extarrows}
\usepackage{wasysym}

\author [Bhattacharyya, Bhowmik]{Tirthankar Bhattacharyya, Mainak Bhowmik}
\address{Department of Mathematics, 	Indian Institute of Science, 
	Bangalore 560012, India}

\author[Sau]{Haripada Sau}
\address{Department of Mathematics, Indian Institute of Science Education and Research, Pune 411008, India.}

\email{tirtha@iisc.ac.in;  mainakb@iisc.ac.in; hsau@iiserpune.ac.in}

\usepackage[x11names]{xcolor}
\usepackage[linkcolor=Blue2,citecolor=red,colorlinks]{hyperref}
\usepackage[capitalize,nameinlink]{cleveref}
\usepackage[numbers,sort&compress]{natbib}
\parindent5mm
\usepackage{color, bm, amscd, tikz-cd}

\setlength{\textheight}{640pt} \setlength{\textwidth}{450pt}
\oddsidemargin -0mm \evensidemargin -0mm \topmargin -15pt
\baselineskip=0.30in

\setcounter{footnote}{1}

\newcommand{\cA}{{\mathcal A}}
\newcommand{\cB}{{\mathcal B}}

\newcommand{\cE}{{\mathcal E}}

\newcommand{\cH}{{\mathcal H}}

\newcommand{\cJ}{{\mathcal J}}
\newcommand{\cK}{{\mathcal K}}

\newcommand{\cN}{{\mathcal N}}

\newcommand{\cP}{{\mathcal P}}

\newcommand{\bT}{{\mathbb{T}}}
\newcommand{\bB}{{\mathbb{B}}}
\newcommand{\bC}{{\mathbb{C}}}
\newcommand{\bD}{{\mathbb{D}}}

\newtheorem{thm}{Theorem}[section]

\newtheorem{corollary}[thm]{Corollary}
\newtheorem{lemma}[thm]{Lemma}

\newtheorem{proposition}[thm]{Proposition}

\theoremstyle{definition}

\newtheorem{theorem}{Theorem}
\newtheorem{definition}[thm]{Definition}
\newtheorem{remark}[thm]{Remark}

\newtheorem{example}[thm]{Example}

\newcommand*{\defeq}{\mathrel{\vcenter{\baselineskip0.5ex \lineskiplimit0pt
                     \hbox{\scriptsize.}\hbox{\scriptsize.}}}
                     =}

\numberwithin{equation}{section}

\def\textmatrix#1&#2\\#3&#4\\{\bigl({#1 \atop #3}\ {#2 \atop #4}\bigr)}
\def\dispmatrix#1&#2\\#3&#4\\{\left({#1 \atop #3}\ {#2 \atop #4}\right)}
\numberwithin{equation}{section}

\def\textmatrix#1&#2\\#3&#4\\{\bigl({#1 \atop #3}\ {#2 \atop #4}\bigr)}
\def\dispmatrix#1&#2\\#3&#4\\{\left({#1 \atop #3}\ {#2 \atop #4}\right)}


\begin{document}
\setcounter{page}{1}

\title[Hankel operators and Projective Hilbert modules]{Hankel operators and Projective Hilbert modules on quotients of bounded symmetric domains}

\maketitle

\begin{abstract}  Consider a bounded symmetric domain $\Omega$ with a finite pseudo-reflection group acting on it as a subgroup of the group of automorphisms. This gives rise to quotient domains by means of basic polynomials $\theta$ which by virtue of being proper maps map the \v Silov boundary of $\Omega$ to the \v Silov boundary of $\theta(\Omega)$. Thus, the natural measure on the \v Silov boundary of $\Omega$ can be pushed forward. This gives rise to Hardy spaces on the quotient domain. 
  
  The study of Hankel operators on the Hardy spaces of the quotient domains is introduced. The use of the weak product space shows that an analogue of Hartman's theorem holds for the small Hankel operator. Nehari's theorem fails for the big Hankel operator and this has the consequence that when the domain $\Omega$ is the polydisc $\mathbb D^d$, the {\em Hardy space} is not a projective object in the category of all Hilbert modules over the algebra $\mathcal A (\theta(\mathbb D^d))$ of functions which are holomorphic in the quotient domain and continuous on the  closure $\overline {\theta(\mathbb D^d)}$. It is not a projective object in the category of cramped Hilbert modules either. Indeed, no projective object is known in these two categories. On the other hand, every normal Hilbert module over the algebra of continuous functions on the \v Silov boundary, treated as a Hilbert module over the algebra $\mathcal A (\theta(\mathbb D^d))$,  is projective. \end{abstract}

\renewcommand{\thefootnote}{\fnsymbol{footnote}} 
\footnotetext{2020 {\em Mathematics Subject Classification}: Primary: 47B35, 46M20. Secondary: 46M10.\\
\emph{Key words:} Hankel, Toeplitz, Projective, Hilbert modules, Nehari's Theorem, Pseudo-reflection groups, Polydisc, Quotient domains.}     

\tableofcontents

\section{Introduction and Background} \label{prelims}
The classical theorem of Nehari \cite{Nehari} which states that a $\varphi \in H^2(\bD)$ defines a bounded Hankel operator on $H^2(\bD)$ if and only if $\varphi \in (H^1(\bD))^*$ has intrigued mathematicians over generations, see \cite{Car-Cla1, Fer, Fe-Sa, Hartman, Russo} and references therein. Attempts to generalize Nehari's result to more than one variable have been successful in certain domains, see \cite{CRW, Krantz-Li} and have met with serious road blocks elsewhere, see \cite{LPPW}.

A class of domains which have emerged as important domains both function theoretically and operator theoretically arises from a group action on bounded symmetric domains. 
\begin{definition} \label{D: Pseudo-reflection}
Let $d >1$ be a fixed positive integer. A {\em pseudo-reflection} is a linear map $\sigma: \bC^d \to \bC^d$ such that  rank of $(I_d - \sigma) =1$ and $\sigma^n = I_d$ for some positive integer $n$. A group $G$ generated by pseudo-reflections is known as a {\em pseudo-reflection group}. The linear map $\sigma$ is called a reflection if it is of order $2$.
\end{definition}
A domain $\Omega$ in $\bC^d$ is said to be $G$-invariant if, under the action $\sigma \cdot \boldsymbol{z} = \sigma^{-1}(\boldsymbol{z})$ of $G$ in $\bC^d$, the domain remains invariant. Clearly, $G$ is then a subgroup of Aut$(\Omega)$, the automorphism group of the domain $\Omega$. The action on a $G$-invariant domain $\Omega$ induces the action $(\sigma \cdot f)(\boldsymbol{z}) = f(\sigma^{-1} \cdot \boldsymbol{z}) = f(\sigma(\boldsymbol{z}))$ of $G$ on complex-valued functions defined on $\Omega$. A $G$-invariant function $f$ is a function on a $G$-invariant domain $\Omega$ such that $\sigma \cdot f= f$ for every $\sigma \in G$. For example, a symmetric polynomial when $\Omega = \bD^d$ and $G$ is the permutation group $S_d$.

Landmark works of Chevalley \cite{Chevalley}, Shephard and Todd \cite{She-Todd} as well as Bedford and Dadok \cite{Bedford} prove that for a finite linear group $G$ and a $G$-invariant domain $\Omega$, the quotient $\Omega/G$ is the image $\theta(\Omega)$ under a polynomial map (known as the {\em basic polynomial map associated to $G$}) 
$$
\theta(\boldsymbol{z})= (\theta_1(\boldsymbol{z}), \dots, \theta_d(\boldsymbol{z})) \text{ for } \boldsymbol{z} \in \Omega
$$
if and only if $G$ is a pseudo-reflection group. The map $\theta$ is not unique. However, the $\theta_j$ can be chosen to be algebraically independent homogeneous polynomials.

The proper map $\theta$ extends as a proper map of the same multiplicity from a neighbourhood of $\overline{\Omega}$ to a neighbourhood of $\overline{\theta(\Omega)}$ and the \v Silov boundary $\partial \Omega$ of $\Omega$ with respect to the uniform algebra  $\mathcal A (\theta(\Omega))$ of functions which are holomorphic in the domain $\Omega$ and continuous on $\overline \Omega$ is the same as $\theta^{-1}(\partial \theta(\Omega))$ where $\partial\theta(\Omega)$ is the \v Silov boundary of $\theta(\Omega)$ \cite{Kos-Zow}.  This implies that $\partial \theta(\Omega)= \theta(\partial \Omega)$). In case $\overline{\Omega}$ is polynomially convex, $\overline{\theta(\Omega)}$ is a polynomially convex compact set, see \cite[Theorem 1.6.24]{Stout}.

A long list of complex analysts and operator theorists have invested in quotient domains when $\Omega$ is the polydisc ($\bD^d$) or the Euclidean unit ball ($\bB_d$) or the bounded symmetric domains of type II. We note the contributions made in \cite{Rudin-IUMJ, Agler-Young, A-Y-W,B-P-R, Tirtha-Hari-JFA, E-K-Z, Costara-JLMS, Bhattacharyya-IUMJ, Ball-Sau, MRZ, BDGR-adv, Pal-Roy} and in particular the recent preprint \cite{GSR} which has a long list of references.

One of the key results that make our work possible is the {\em analytic Chevalley-Shepard-Todd} Theorem obtained in \cite{BDGR-adv} which states that for a pseudo-reflection group $G$ acting on an invariant domain $\Omega$ along with its basic polynomials $\theta$ as above, any holomorphic function $f : \Omega \rightarrow \mathbb C$ invariant under $G$ can be written as $f = g \circ \theta$ for a holomorphic function $g : \theta(\Omega) \rightarrow \mathbb C$.  

The Hankel operators are well understood on the disc \cite{Nehari, Peller-book}, on the Euclidean ball \cite{CRW} and on more general smoothly bounded strongly pseudoconvex domains \cite{Krantz-Li}. Also, they have been studied on the polydisc \cite{Co-Sa, Cotlar-Sadosky-IEOT, Fe-Sa}. They are connected with the Nevanlinna-Pick interpolation \cite{Co-Sa, Peller-book}, Carath\'eodory-Fej\'er interpolation \cite{Peller-book}, similarity problems \cite{Davidson-Paulsen} and projectivity of Hardy modules \cite{Car-Cla1, Ferguson-PAMS, Guo-Studia} to name a few. Characterizing the boundedness and compactness of Hankel operators automatically relates function theory with various topics in harmonic analysis.This is why we are going to study Hankel operators on the quotient domains. It is challenging because of their intrinsic geometry as they are mostly non-smooth domains. 

This paper has two parts. Part A consists of function theory whose algebraic consequences are elucidated in Part B. Among the various approaches to the theory of bounded operators on Hilbert spaces, none has been more intriguing than the one through Hilbert modules as can be seen from \cite{D-P_Cite, JS}. \begin{definition}
A Hilbert module $\mathcal{H}$ over a function algebra $\mathcal{A}$ is a Hilbert space $\mathcal{H}$ which is also a module over $\mathcal{A}$ such that $(a,h) \rightarrow a\cdot h$ is a continuous function. If moreover,  the linear operator $T_a:\mathcal{H}\to \mathcal{H}$ defined by $T_a(h):= a\cdot h$ satisfies 
\begin{align}\label{contractive-HM}
\|T_a(h)\| \leq \|a\|\|h\| \quad \text{for all}\,\, a\in \mathcal{A} \quad \text{and} \quad h\in \mathcal{H}, 
\end{align}
then $\mathcal{H}$ is said to be a contractive Hilbert module. 
\end{definition}
 We shall concentrate on contractive Hilbert modules over $\cA(\theta(\bD^d))$ where $\theta$ is a basic polynomial corresponding to the action of a pseudo-reflection group $G$ on $\bD^d$. The actions of the co-ordinate functions $p_j$ for $j=1, \dots, d$ on the Hilbert module $\mathcal{H}$ are denoted by $T_{p_j}$ for $j=1,\dots, d$. We denote the category of Hilbert modules over $\cA(\theta(\bD^d))$ by $\mathfrak{H}$. 
\begin{definition}
Let $\mathcal{H}$ and $\mathcal{K}$ be two Hilbert modules over $\cA(\theta(\bD^d))$. A module map between $\mathcal{H}$ and $\mathcal{K}$ is a bounded linear map $X: \mathcal{H} \to \mathcal{K}$ such that 
$$X(f\cdot h)=f \cdot X(h)\quad \text{for all}\,\, h\in \mathcal{H}, f\in \cA(\theta(\bD^d)).$$ 
We say that $\mathcal{H}$ and $\mathcal{K}$ are \textit{similar} if $X$ is invertible. A Hilbert module over $\cA(\theta(\bD^d))$ is said to be \textit{cramped} if it is similar to a contractive Hilbert module. We denote the category of cramped Hilbert modules over $\cA(\theta(\bD^d))$ by $\mathfrak{C}$.
\end{definition}
Projective modules are $cornerstones$ (to borrow from \cite{D-P}) for studying general modules in homological algebra. A well-known characterization which stems from general module theory over rings and which we can take as the definition of projectivity is as follows.
\begin{definition}\label{L:Projective}
A Hilbert module $\cP$ in the category $\mathfrak{H}$ or in the category $\mathfrak{C}$ is projective  if and only if every short exact sequence of the form 
$$ 0 \longrightarrow \cH \longrightarrow \cK \longrightarrow \cP \longrightarrow 0,$$
where $\cH$ and $\cK$ are in the same category, is a split exact sequence. 
\end{definition}
The definition above leads to a measure of non-projectivity. Let $\mathcal{H}$ and $\mathcal{K}$ be objects in category $\mathfrak{H}$ of Hilbert modules over $\cA(\theta(\bD^d))$. Let $\mathcal{S}\left(\mathcal{K},\mathcal{H}\right)$ be the set of all short exact sequences of the form $$ E: 0\longrightarrow \mathcal{H} \xlongrightarrow{\alpha} \cJ \xlongrightarrow{\beta} \mathcal{K} \longrightarrow 0,$$ where $\cJ$ is an object in $\mathfrak{H}$. Let $\cJ'$ be an object in $\mathfrak{H}$ and 
$$E':0\longrightarrow \mathcal{H} \xlongrightarrow{\alpha'} \cJ' \xlongrightarrow{\beta'} \mathcal{K} \longrightarrow 0 $$ be another short exact sequence. Here $\alpha, \beta, \alpha'$ and $\beta'$ are Hilbert module maps. We say $E$ is \textit{equivalent} to $E'$ if there is a Hilbert module map $\gamma: \cJ\to \cJ'$ such that the diagram 

\[
  \begin{tikzcd}
    E: 0 \arrow{r} & \mathcal{H} \arrow{r}{\alpha}  \arrow[d, equal] & \cJ \arrow{r}{\beta} \arrow[d, dashed, "\gamma"] & \mathcal{K} \arrow{r} \arrow[d, equal] & 0 \\
    E': 0 \arrow{r} & \mathcal{H} \arrow{r}{\alpha'} & \cJ' \arrow{r}{\beta'} & K \arrow{r} & 0
  \end{tikzcd}
\] commutes. This becomes an equivalence relation. It can be deduced using the exactness and the commutativity of the diagram that $\gamma$ is actually an invertible map. The set of equivalence classes of $\mathcal{S}(\mathcal{K}, \mathcal{H})$ under this relation is defined to be the cohomology group $\operatorname{Ext_{\mathfrak{H}}(\mathcal{K}, \mathcal{H})}$, called as \textit{extension group}. It is a group under an addition rule, known as the \textit{Baer sum}; see \cite[Chapter III]{Maclane}. The ``zero" of this additive group is the short exact sequence 
$$ 0 \longrightarrow \mathcal{H} \xlongrightarrow{i} \mathcal{H}\oplus \mathcal{K} \xlongrightarrow{\pi} \mathcal{K} \longrightarrow 0$$ 
where $\mathcal{H}\oplus \mathcal{K}$ (Hilbert space direct sum) is the Hilbert module over $\cA(\theta(\bD^d))$ under the action $g \cdot (h,k)= (g\cdot h,  g \cdot k)$, $i(h):= (h,0)$ is the inclusion, and $\pi(h,k)= k$ is the projection on $\mathcal{K}$ for $h\in \mathcal{H}, k\in \mathcal{K}$. In a similar way we can define extension group in the cramped category $\mathfrak{C}$ for two cramped Hilbert modules $\mathcal{H}$ and $\mathcal{K}$; it is denoted by $\operatorname{Ext}_{\mathfrak{C}}(\mathcal{K}, \mathcal{H})$. This leads to a re-statement of the definition of projectivity: A Hilbert module $\cP$ is projective in $\mathfrak{H}$ or $\mathfrak{C}$ if and only if $\textup{Ext}_{\mathfrak{H}}(\cP, \cH)=\{0\}$ for any Hilbert module $\cH$ in the same category.

\begin{itemize}
\item We now briefly describe the main results of Part A. Every bounded symmetric domain $\Omega$ has a Hardy space associated with it. Corresponding to any one-dimensional representation $\rho$ of a pseudo-reflection group $G \subset $ Aut$(\Omega)$, a Hardy space $H^2_\rho(\theta(\Omega))$ of functions on $\theta(\Omega)$ is known. Using its identification as a subspace of $L^2(\partial \theta(\Omega), \mu_{\rho, \theta})$, where $\mu_{\rho, \theta}$ is the push-forward of a natural measure from the \v Silov boundary of the bounded symmetric domain $\Omega$, we define small Hankel operators in  \cref{smallH} for any $H^2_\rho(\theta(\Omega))$ symbol. The key concept of the weak product gives us the two main results of \cref{smallH}: 
    \begin{theorem} \leavevmode
    \begin{enumerate}
    \item The space of symbols which define bounded small Hankel operators is isometrically isomorphic to the dual of the weak product of $H^2_\rho(\theta(\Omega))$ with itself. 
    
    \item A small Hankel operator is compact if and only if the symbol is in the closure of holomorphic polynomials in the above mentioned dual. 
    \end{enumerate}
    \end{theorem}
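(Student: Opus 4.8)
The plan is to route both statements through a single functional-analytic duality and to exploit the defining feature of the small Hankel operator: its associated bilinear form factors through multiplication. Write $H=H^2_\rho(\theta(\Omega))$ and recall from \cref{smallH} the small Hankel operator $\Gamma_\varphi$ together with the weak product $H\odot H$, whose norm $\|u\|_{H\odot H}$ is the infimum of $\sum_k\|f_k\|\,\|g_k\|$ over all factorizations $u=\sum_k f_k g_k$. First I would attach to each admissible symbol $\varphi$ the bilinear form $B_\varphi(f,g)=\langle\Gamma_\varphi f,\overline g\rangle$; by the very definition of the small Hankel operator this equals the pairing of $\varphi$ against the product $fg$ (an integral against the pushed-forward measure), so $B_\varphi$ depends on the pair $(f,g)$ only through $fg$. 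The elementary dictionary between bounded operators and bounded bilinear forms gives $\|B_\varphi\|=\|\Gamma_\varphi\|$. Since $B_\varphi$ is a function of $fg$ alone, it descends to a linear functional $L_\varphi$ on the dense subspace of finite sums of products in $H\odot H$, and taking the infimum over factorizations yields $|L_\varphi(u)|\le\|\Gamma_\varphi\|\,\|u\|_{H\odot H}$.

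This already shows $\varphi\mapsto L_\varphi$ is norm-decreasing from the symbol space into $(H\odot H)^*$. For the reverse inequality and surjectivity I would start from an arbitrary $L\in(H\odot H)^*$: restricting $L$ to products produces a bounded bilinear form on $H\times H$, hence by the same dictionary a bounded operator $\Gamma$ with $\|\Gamma\|=\|L\|$, and the symmetry $L(fg)=L(gf)$ of the weak product forces $\Gamma$ to be of small Hankel type. Pairing $L$ against products of a function with a reproducing element, and invoking the analytic Chevalley--Shephard--Todd identification of $\rho$-equivariant holomorphic functions on $\Omega$ with holomorphic functions on $\theta(\Omega)$, recovers a concrete symbol $\varphi\in H$ with $L=L_\varphi$. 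Combined with the estimate above this makes $\varphi\mapsto L_\varphi$ an isometric isomorphism, proving part (1).

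For part (2) the easy inclusion is that a holomorphic polynomial symbol renders $\Gamma_\varphi$ finite rank, because in a monomial basis its Hankel matrix has only finitely many nonzero anti-diagonals; hence any $\varphi$ in the norm-closure of the polynomials (in the dual norm, which by part (1) is the operator norm) gives a norm-limit of finite-rank operators and is therefore compact. For the converse I would transport the problem to operators via the isometry of part (1): introduce the compressions $P_n$ onto the span of monomials of degree at most $n$, observe $P_n\to I$ strongly, and verify that $P_n\Gamma_\varphi P_n$ is again a small Hankel operator whose symbol $\varphi_n$ is a holomorphic polynomial. If $\Gamma_\varphi$ is compact then $P_n\Gamma_\varphi P_n\to\Gamma_\varphi$ in operator norm, and the isometry turns this into $\varphi_n\to\varphi$ in the dual norm, placing $\varphi$ in the closure of the polynomials.

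The hard part will be the bookkeeping forced by the quotient structure. Two products of $\rho$-equivariant functions are $\rho^2$-equivariant, so the pairing $\int\varphi\,fg\,d\mu$ naturally lives on the \v Silov boundary with the push-forward measure attached to the appropriate character; showing that $H\odot H$ embeds into the correct $L^1$ space and that the pairing is non-degenerate is the technical heart of part (1), and recovering the symbol in $H$ rather than in some larger conjugate space is exactly where the analytic Chevalley--Shephard--Todd theorem does the work. In part (2) the delicate point is that the compressions $P_n$ must be compatible with the $G$-action so that each $\varphi_n$ descends to a genuine holomorphic polynomial on $\theta(\Omega)$, equivalently a $\rho$-equivariant polynomial on $\Omega$; the non-smooth geometry of $\theta(\Omega)$ obstructs building such truncations directly, so lifting through $\theta$ and working on the bounded symmetric domain $\Omega$ appears essential. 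Once the Hankel structure is shown to be preserved by the $P_n$, the passage from strong convergence to norm convergence on compact operators is routine, and the remaining effort is precisely this descent of the approximating polynomials together with the character bookkeeping.
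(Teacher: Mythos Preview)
Your plan for part (1) is the right shape and matches the paper's strategy, but the difficulties you flag are not the real ones. There is no character bookkeeping: elements of $H$ are functions on $\theta(\Omega)$, their products are again functions on $\theta(\Omega)$, and the measure $\mu_{\rho,\theta}$ does not shift. Nor is the analytic Chevalley--Shephard--Todd theorem needed to recover the symbol: since $f=f\cdot 1$ gives a contractive inclusion $H\hookrightarrow H\odot H$, any $L\in(H\odot H)^*$ restricts to a bounded functional on $H$ and Riesz produces $\varphi\in H$ immediately. The genuine obstacle is the one you slide past when you assert that $B_\varphi$ ``depends only on $fg$'' for arbitrary $f,g\in H$: since $\varphi$ is only in $L^2$ it need not pair against $fg\in L^1$, and well-definedness of $L_\varphi$ on finite sums is not automatic. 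The paper handles this by working first on polynomial products, where the integral formula is literal, and then proving (\cref{P: Norm equality}) that the polynomial weak-product norm agrees with the full one; the device is the family of radial dilations $\delta_r f(\boldsymbol p)=r^{2m_0}f(r^{\boldsymbol m}\boldsymbol p)$ together with a Richter--Sundberg style kernel argument.

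Part (2) has a concrete gap. The compression $P_n h_\varphi P_n$ is \emph{not} a small Hankel operator: by \cref{alg-characterization} a bounded conjugate-linear $A$ is small Hankel iff $T_{p_j}^*A=AT_{p_j}$, and since the degree-truncation $P_n$ commutes with neither $T_{p_j}$ nor $T_{p_j}^*$ this relation fails for $P_n h_\varphi P_n$ already on the disc. Consequently there is no $\varphi_n$ with $h_{\varphi_n}=P_n h_\varphi P_n$, and the isometry from part (1) cannot be applied to the compressions. The paper's fix is to replace truncation by dilation: one verifies the identity $r^{-2m_0}\delta_r h_\varphi\delta_r=h_{\delta_r\varphi}$, which \emph{is} a Hankel operator, and compactness together with $\delta_r\to I$ strongly (and $\delta_r=\delta_r^*$) gives $h_{\delta_r\varphi}\to h_\varphi$ in norm. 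The dilated symbol $\delta_r\varphi$ extends holomorphically past $\overline{\theta(\Omega)}$, so Oka--Weil (using polynomial convexity of $\overline{\theta(\Omega)}$) supplies the polynomial approximation, and \cref{Hart-L1} shows polynomial symbols give finite rank. The same dilation operators thus carry both parts of the argument.
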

    
    The big Hankel operator is the object of study in \cref{Big-Hankel} and the dominant theme is the failure of Nehari's Theorem. Two main results are:
      \begin{theorem} \leavevmode
    \begin{enumerate}
\item Under a regularity condition on the Szeg\"o kernel of the quotient domain, if a function $f \in L^2(\partial \theta(\Omega), \mu_{\rho, \theta})$ defines bounded big Hankel operators $H_f$ and $H_{\bar{f}}$ , then $f$ is in BMO of the \v Silov boundary of the quotient domain.
\item Nehari's theorem does not hold for big Hankel operators in general on quotient domains.
   \end{enumerate}
\end{theorem}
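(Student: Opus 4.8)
The plan is to treat the two parts by quite different means: the first is a positive commutator estimate of Calder\'on--Zygmund type, while the second is a counterexample that exploits the multi-parameter geometry of the polydisc. For part (1), I would first reduce the two-sided hypothesis to a single commutator condition. Writing $P$ for the Szeg\"o projection of $L^2(\partial\theta(\Omega),\mu_{\rho,\theta})$ onto $H^2_\rho(\theta(\Omega))$ and $M_f$ for multiplication by $f$, a direct computation on the decomposition $L^2=H^2_\rho(\theta(\Omega))\oplus H^2_\rho(\theta(\Omega))^{\perp}$ shows that the commutator $[M_f,P]=M_fP-PM_f$ acts as $H_f$ on $H^2_\rho(\theta(\Omega))$ and as $-H_{\bar f}^{*}$ on its orthogonal complement (indeed, for $h$ in the Hardy space $[M_f,P]h=(I-P)(fh)=H_fh$, while for $g$ in the complement $[M_f,P]g=-P(fg)=-H_{\bar f}^{*}g$). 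Hence $\|[M_f,P]\|=\max\{\|H_f\|,\|H_{\bar f}\|\}$, so the hypothesis is exactly that $[M_f,P]$ is bounded.

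Next I would realize $[M_f,P]$ as the integral operator
\[
[M_f,P]h(z)=\int_{\partial\theta(\Omega)}\big(f(z)-f(w)\big)\,S(z,w)\,h(w)\,d\mu_{\rho,\theta}(w),
\]
where $S$ is the Szeg\"o kernel of the quotient domain. The regularity condition on $S$ is precisely what is needed to treat $S$ as a standard kernel on the quasi-metric measure space $(\partial\theta(\Omega),\mu_{\rho,\theta})$, namely size and smoothness (H\"ormander) estimates together with a non-degeneracy lower bound on $|S|$. Granting these, I would run the classical converse of the Coifman--Rochberg--Weiss argument: for a boundary ball $B$, select a companion ball at comparable distance on which $S(\cdot,\cdot)$ keeps a definite size and sign, and test the commutator against functions supported there to dominate the mean oscillation $\frac{1}{\mu_{\rho,\theta}(B)}\int_B|f-f_B|\,d\mu_{\rho,\theta}$ by $\|[M_f,P]\|$; the supremum over boundary balls then yields $f\in\mathrm{BMO}(\partial\theta(\Omega),\mu_{\rho,\theta})$. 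The main obstacle here is geometric rather than formal: since the quotient domains are typically non-smooth, establishing the doubling quasi-metric structure and the kernel estimates is the real work, which is why the regularity of the Szeg\"o kernel is imposed as a hypothesis rather than derived.

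For part (2), I would specialize to $\Omega=\bD^d$ and show that the isometric heart of Nehari's theorem---that $\|H_f\|$ is comparable to the distance from $f$ to the analytic algebra $\cA(\theta(\bD^d))$, equivalently that every bounded big Hankel operator admits a bounded symbol modulo that algebra---must fail. The conceptual reason is that on the polydisc the Szeg\"o projection factors through the one-variable projections, so that, by the reduction in part (1), the pair $H_f,H_{\bar f}$ is governed by the rectangular class $\mathrm{bmo}$, which is strictly smaller than the product $\mathrm{BMO}$ that a Nehari--Fefferman duality would predict. Concretely, I would transport to $\theta(\bD^d)$---via the pushforward measure $\mu_{\rho,\theta}$ and the analytic Chevalley--Shephard--Todd identification of invariant functions---a symbol built from a lacunary or tensor-product construction on the torus for which $H_f$ is bounded yet $f$ has no bounded representative in $L^{\infty}+\cA(\theta(\bD^d))$, so that the Nehari distance formula breaks down. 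The hard part is to produce such a symbol in the pushed-forward setting and to verify simultaneously the boundedness of the operator and the escape of the symbol from $L^{\infty}+\cA(\theta(\bD^d))$; here the known multi-parameter obstructions, in the spirit of \cite{LPPW}, would be adapted to the quotient domain.
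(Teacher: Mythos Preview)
For part (1) your approach diverges substantially from the paper's and, as written, leaves a genuine gap. The BMO in the paper is \emph{not} defined via mean oscillations over boundary balls; it is the Poisson--Szeg\"o BMO
\[
\|f\|_* = \sup_{\boldsymbol{p}\in\theta(\Omega)}\int_{\partial\theta(\Omega)}|f(\boldsymbol{\zeta})-\tilde f(\boldsymbol{p})|^2\,\mathcal P_\rho(\boldsymbol{p},\boldsymbol{\zeta})\,d\mu_{\rho,\theta}(\boldsymbol{\zeta}),
\]
and the stated regularity hypothesis is only that $S_{\rho,\theta}(\cdot,\boldsymbol{p})^{-1}\in\cA(\theta(\Omega))$ for each $\boldsymbol{p}\in\theta(\Omega)$. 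With those definitions the paper's proof is a short kernel-function computation: the oscillation integral at $\boldsymbol{p}$ equals $\|f\hat s_{\boldsymbol{p}}\|^2-|\tilde f(\boldsymbol{p})|^2$, and an algebraic identity bounds this by $\|H_f(\hat s_{\boldsymbol{p}})\|^2+\|H_{\bar f}(\hat s_{\boldsymbol{p}})\|^2$ plus a cross-term dominated by their product, so the estimate $\le 2(\|H_f\|^2+\|H_{\bar f}\|^2)$ drops out with no metric structure on $\partial\theta(\Omega)$ whatsoever. Your route instead requires a doubling quasi-metric and Calder\'on--Zygmund estimates on the Szeg\"o kernel; you concede this is ``the main obstacle'' and absorb it into the hypothesis, but that is not what the paper assumes, and on these typically non-smooth \v Silov boundaries neither the doubling property nor the standard-kernel bounds are available. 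Even granting them, it is unclear your ball-BMO coincides with the paper's Poisson--Szeg\"o BMO in this generality.

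For part (2) your strategy---start from a polydisc counterexample, make it $G$-invariant, and push forward to $\theta(\bD^d)$---matches the paper's, but the proposal is missing the concrete content that makes the argument go through. The paper takes the explicit Bakonyi--Timotin symbol $\varphi(z_1,z_2)=\sum_{n\ge1}n^{-1}z_1^n\bar z_2^n$, extends it trivially to $\bD^d$, averages over $G$ to obtain $\Phi=\tilde\Phi\circ\theta$, verifies boundedness of $H_{\tilde\Phi}$ by a unitary-conjugation argument, and then rules out any $L^\infty$ representative $\tilde\Phi+g$ by compressing the multiplication operator to the diagonal subspace $\mathcal M_0=\overline{\operatorname{span}}\{z_1^n\bar z_2^n:n\in\mathbb Z\}$ and transferring via a unitary to $L^2(\bT)$, where the resulting one-variable symbol has a logarithmic singularity at $z=1$. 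Your invocation of the strict inclusion of rectangular $\mathrm{bmo}$ in product $\mathrm{BMO}$ and of \cite{LPPW} aims at the wrong target: that circle of ideas concerns the small Hankel operator and $H^1$-duality, whereas here one needs a single explicit symbol whose big Hankel operator is bounded but which has no $L^\infty$ representative modulo the holomorphic functions. Without a specific symbol and a mechanism to exclude such representatives after group averaging, part (2) of the proposal remains an outline rather than a proof.
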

Expectedly, all these have algebraic applications which are the contents of the next part. 
    
    \item  The broad aim of Part B is to find projective Hilbert modules. Whether there exist non-zero projective Hilbert modules over any function algebra was asked in the seminal work \cite{D-P}. This was first answered in \cite{Ca-Cl-Fo-Wi} where projective Hilbert modules over $\mathcal A(\bD)$ were found. The only other known examples of projective Hilbert modules are over the polydisc algebra $\mathcal A(\bD^d)$, see \cite{Car-Cla2}. Hence, there may be a need to restrict the category. Then there is success in \cite{Cl-Mc}. We shall impose a topological condition as in \cite{Guo-Studia} to consider the so-called normal category. 
         
         \cref{Pisier} deals with the two categories $\mathfrak{H}$ and $\mathfrak{C}$ and shows that they are not equal by means of an example \`a la Pisier. 
         
         The main result of \cref{Application} says that
         \begin{theorem}
            The Hardy space is not a projective object in the category $\mathfrak{H}$ or in the category $\mathfrak{C}$.
            \end{theorem}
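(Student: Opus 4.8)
The plan is to convert the failure of Nehari's theorem for the big Hankel operator, established in \cref{Big-Hankel}, into a concrete short exact sequence ending in the Hardy space that does not split, thereby exhibiting a nonzero class in the relevant extension group. Write $H^2 := H^2_\rho(\theta(\bD^d))$ for the Hardy space, regard it as a subspace of $L^2 := L^2(\partial\theta(\bD^d), \mu_{\rho,\theta})$, and let $P$ denote the Szeg\"o projection onto $H^2$. The principle driving the argument is that every big Hankel operator is a module map over $\cA(\theta(\bD^d))$: using $a\,Pg \in H^2$ for $a \in \cA(\theta(\bD^d))$ one checks the intertwining identity $(I-P)M_a = T_a^{(H^2)^\perp}(I-P)$ on $L^2$, and hence each $H_f = (I-P)M_f|_{H^2}$ satisfies $H_f\,T_a = T_a^{(H^2)^\perp}H_f$. (Conversely these exhaust $\operatorname{Hom}(H^2,(H^2)^\perp)$, but only the stated direction is needed below.)

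First I would fix, by the failure of Nehari's theorem in \cref{Big-Hankel}, a bounded big Hankel operator $H = H_f \colon H^2 \to (H^2)^\perp$ that admits no bounded ($L^\infty$) symbol. Starting from the fundamental exact sequence of contractive Hilbert modules
\begin{equation}\label{eq:fund-ses}
0 \longrightarrow H^2 \xlongrightarrow{\ \iota\ } L^2 \xlongrightarrow{\ I-P\ } (H^2)^\perp \longrightarrow 0 ,
\end{equation}
I would form the pullback along $H$, namely the closed subspace
$$
\cK \;=\; \{\,(g,h) \in L^2 \oplus H^2 : (I-P)g = H h\,\},
$$
which is a submodule of $L^2 \oplus H^2$ precisely because both $H$ and $I-P$ intertwine the module actions. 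This produces the short exact sequence
\begin{equation}\label{eq:test-ses}
0 \longrightarrow H^2 \xlongrightarrow{\ k\,\mapsto\,(k,0)\ } \cK \xlongrightarrow{\ (g,h)\,\mapsto\, h\ } H^2 \longrightarrow 0 .
\end{equation}

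The decisive step is to show that \cref{eq:test-ses} splits in the category of Hilbert modules if and only if $H$ has a bounded symbol. A module section of the quotient map is forced to have the form $s(h) = (\Phi(h), h)$ for a bounded module map $\Phi \colon H^2 \to L^2$ with $(I-P)\Phi = H$; since $\cA(\theta(\bD^d))$ is dense in $H^2$ and $\Phi(a) = a\,\Phi(1)$, such a $\Phi$ must be multiplication $M_\psi$ by $\psi := \Phi(1)$, and then $\psi \in L^\infty$ gives $H = (I-P)M_\psi = H_\psi$, i.e. $\psi$ is a bounded symbol for $H$ --- contradicting the choice of $f$. Hence \cref{eq:test-ses} does not split. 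As all three of its terms are contractive Hilbert modules --- the middle one being a submodule of the contractive module $L^2 \oplus H^2$ --- the same non-split sequence is a valid element of $\mathfrak{H}$ and of $\mathfrak{C}$; therefore $\operatorname{Ext}_{\mathfrak{H}}(H^2, H^2) \neq \{0\}$ and $\operatorname{Ext}_{\mathfrak{C}}(H^2, H^2) \neq \{0\}$, and the Hardy space is projective in neither category.

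The step I expect to be the main obstacle is the implication that a bounded module map $\Phi \colon H^2 \to L^2$ is multiplication by an $L^\infty$ symbol rather than merely by an $L^2$ one: a priori $\psi = \Phi(1)$ lies only in $L^2$, and promoting a bounded $(H^2,L^2)$-multiplier to $L^\infty$ is exactly where the analytic input enters. I would argue this by testing $M_\psi$ against normalized Szeg\"o kernels $k_w/\|k_w\|$ and identifying $\lim_w \|\psi\,k_w\|^2/\|k_w\|^2$, as $w$ tends to the \v Silov boundary, with the boundary values of a Poisson-type average of $|\psi|^2$ --- the point at which the regularity hypothesis on the Szeg\"o kernel of $\theta(\bD^d)$ from \cref{Big-Hankel} is used. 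The remaining ingredients --- closedness and invariance of $\cK$, exactness of \cref{eq:test-ses}, and the intertwining identity for $I-P$ --- are routine verifications.
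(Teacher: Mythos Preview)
Your overall architecture coincides with the paper's: your pullback \eqref{eq:test-ses} of the fundamental sequence \eqref{eq:fund-ses} along $H$ is exactly the image of $H$ under the connecting homomorphism $\delta$ in the long exact sequence of \cref{long-exact}, and the statement ``the pullback splits iff $H$ has a bounded symbol'' is precisely the statement ``$\pi_*:\operatorname{Hom}(H^2,L^2)\to\operatorname{Hom}(H^2,L^2/H^2)$ is surjective iff Nehari holds'' that the paper deduces from \cref{Hom-lemma1} and \cref{Hom-lemma2}. So the strategy is the same, and you have correctly isolated the one genuine analytic step: a bounded module map $\Phi:H^2\to L^2$ is $M_\psi$ with $\psi\in L^\infty$.

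Where you diverge from the paper is in how you propose to prove that step. Your plan --- test on normalized Szeg\"o kernels and identify $\|\psi\,\hat s_w\|^2$ with the Poisson--Szeg\"o extension $\widetilde{|\psi|^2}(w)$, then let $w\to\partial\theta(\bD^d)$ --- presupposes that $\widetilde{|\psi|^2}(w)\to |\psi|^2$ $\mu_{\rho,\theta}$-a.e.\ on the \v Silov boundary. That boundary-reproducing property of the Poisson--Szeg\"o kernel is \emph{not} established anywhere in the paper for the quotient domain, and the regularity hypotheses used in \cref{Big-Hankel} (non-vanishing of $S_{\rho,\theta}$, blow-up on the diagonal) do not by themselves give it. The paper instead proves $\psi\in L^\infty$ by a completely different route: it shows that $P_{H^2}\Phi$ is a bounded Toeplitz operator satisfying the Brown--Halmos relations \eqref{B-H-relation}, invokes \cref{Brown-Halmos} to obtain an $L^\infty$ symbol $\varphi$, and then uses the explicit orthonormal basis $\{t_\lambda\}$ together with the fact that $p_d^N t_\lambda\in H^2$ for large $N$ (coming from the special form of $\theta_d$ for $G(m,t,d)$) to show $\varphi=\psi$ and $\Phi=M_\psi$. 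That argument is specific to the polydisc quotients under $G(m,t,d)$ but is rigorous as stated; your Berezin-transform route would need an additional approximate-identity lemma for $\mathcal P_\rho$ before it closes.
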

            The proof of this theorem requires material from Part A, viz., the failure of Nehari's theorem.  In fact, no projective objects are known in this context. 
         
         On the other hand, \cref{normal_cat} shows that
          \begin{theorem}
         Every normal Hilbert module over $C(\partial \theta(\bD^d))$ is a projective object in the category of normal Hilbert modules over $\cA(\theta(\bD^d))$.
          \end{theorem}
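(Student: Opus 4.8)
My plan is to reduce the projectivity of a boundary module to a single structural fact: for normal Hilbert modules every $\cA(\theta(\bD^d))$-module map is automatically a $C(\partial\theta(\bD^d))$-module map. Once that is available, the splitting of an arbitrary short exact sequence is a routine orthogonal-complement argument. Recall that, by definition, every object $\cM$ of the normal category is the restriction to $\cA(\theta(\bD^d))$ of a unital $*$-representation of $C(\partial\theta(\bD^d))$; in particular each module operator $T_f^{\cM}$ (for $f\in\cA(\theta(\bD^d))$) is normal and satisfies $(T_f^{\cM})^{*}=T_{\bar f}^{\cM}$. The module $\cP$ in the statement is precisely such an object, and so are all the terms appearing in any short exact sequence within the category.

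The key step, which I expect to be the crux of the whole argument, is the following: if $X\colon\cM\to\cN$ is a module map between two objects of the normal category, then $X\,T_{\bar f}^{\cM}=T_{\bar f}^{\cN}\,X$ for every $f\in\cA(\theta(\bD^d))$. Indeed, starting from the intertwining relation $X\,T_f^{\cM}=T_f^{\cN}\,X$ and using the normality of both $T_f^{\cM}$ and $T_f^{\cN}$, the Fuglede--Putnam theorem yields
$$X\,(T_f^{\cM})^{*}=(T_f^{\cN})^{*}\,X,$$
which is exactly the asserted identity. Since the coordinate functions separate the points of $\partial\theta(\bD^d)$, the functions $\{f,\bar f:f\in\cA(\theta(\bD^d))\}$ generate $C(\partial\theta(\bD^d))$ as a $C^{*}$-algebra by Stone--Weierstrass, so $X$ in fact intertwines the two full $C(\partial\theta(\bD^d))$-representations.

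With this in hand I would take any short exact sequence
$$0\longrightarrow\cH\xrightarrow{\ \alpha\ }\cK\xrightarrow{\ \beta\ }\cP\longrightarrow 0$$
in the normal category. By the previous step $\beta$ is a $C(\partial\theta(\bD^d))$-module map, so its kernel $\cM_0\defeq\ker\beta=\operatorname{ran}\alpha$ is invariant under each $T_f^{\cK}$ and under each $(T_f^{\cK})^{*}=T_{\bar f}^{\cK}$; invariance under the generators and their adjoints forces $\cM_0$ to reduce the entire $*$-representation on $\cK$. Hence $\cK=\cM_0\oplus\cM_0^{\perp}$ is an orthogonal direct sum of $C(\partial\theta(\bD^d))$-submodules, and in particular $\cM_0^{\perp}$ is again an object of the category. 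The restriction $\beta|_{\cM_0^{\perp}}\colon\cM_0^{\perp}\to\cP$ is a module map that is injective (its kernel is $\cM_0^{\perp}\cap\cM_0=\{0\}$) and surjective (because $\beta(\cK)=\cP$ while $\beta(\cM_0)=\{0\}$, so $\beta(\cM_0^{\perp})=\cP$). By the open mapping theorem it has a bounded inverse $s\colon\cP\to\cM_0^{\perp}\subseteq\cK$, which is a module map with $\beta s=\mathrm{id}_{\cP}$. Thus the sequence splits, and as it was arbitrary, $\cP$ is projective.

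The only genuinely nontrivial ingredient is the Fuglede--Putnam upgrade in the second paragraph; everything afterwards is elementary Hilbert-space geometry together with the open mapping theorem. I would emphasize that normality is essential at exactly this point: without it an $\cA(\theta(\bD^d))$-module map need not respect the conjugates $\bar f$, the kernel of $\beta$ need not reduce, and the orthogonal complement would not deliver a section. This is precisely the mechanism that fails for the Hardy module, whose module operators are subnormal rather than normal, and it explains why the present theorem coexists with the earlier non-projectivity results.
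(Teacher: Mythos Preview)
Your argument rests on a misreading of the word ``normal'' as used in this paper. In \S\ref{normal_cat} a Hilbert $\cA(\theta(\bD^d))$-module $\cH$ is called \emph{normal} when, for each $h\in\cH$, the map $f\mapsto f\cdot h$ is $(wk^{*}\!-\!wk)$-continuous. This is a purely topological condition; it does \emph{not} say that the module operators are normal operators, nor that the action extends to a $*$-representation of $C(\partial\theta(\bD^d))$. Indeed, the Hardy module $H_\rho^2(\theta(\bD^d))$ itself satisfies this continuity hypothesis (the same $L^1$-pairing computation given for $L_\rho^2$ works verbatim), yet its coordinate multiplier $T_{p_d}$ is a pure isometry, not a normal operator. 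Thus your opening claim that ``every object $\cM$ of the normal category is the restriction of a unital $*$-representation of $C(\partial\theta(\bD^d))$'' is false.

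This breaks the proof at the Fuglede--Putnam step. In a short exact sequence $0\to\cH\to\cK\xrightarrow{\beta}\cP\to 0$ in $\mathfrak N$, only $\cP$ is assumed to carry a $C(\partial\theta(\bD^d))$-action; the middle term $\cK$ is merely a normal $\cA(\theta(\bD^d))$-module, so $T_f^{\cK}$ need not be normal. Fuglede--Putnam requires \emph{both} intertwined operators to be normal, so from $\beta\,T_f^{\cK}=T_f^{\cP}\beta$ you cannot conclude $\beta\,(T_f^{\cK})^{*}=(T_f^{\cP})^{*}\beta$. Consequently $\ker\beta$ need not be a reducing subspace of $\cK$, and the orthogonal-complement splitting collapses. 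The paper's proof avoids this entirely: it extends the action to $H^\infty(\theta(\bD^d))$ via the Carath\'eodory approximation and weak$^*$ density (\cref{wk-density}, \cref{inner-density}), then uses an invariant mean on the abelian semigroup of inner functions to average an arbitrary cocycle into a coboundary (\cref{NormalExt}). The $C(\partial\theta(\bD^d))$-structure on $\cN$ enters only to make $T_{\bar\psi}^{(\cN)}$ meaningful for inner $\psi$; nothing about normality of operators on the other module is ever assumed.
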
 These terminologies will be defined at appropriate places. To our pleasant surprise, classical function theory - for example Carath\'eodory approximation - plays a role in the study of normal modules.
\end{itemize}

\part{Boundedness and compactness of Hankel operators}
\section{Hardy spaces of quotients of bounded symmetric domains}\label{Hardy}
 {\em All pseudo-reflection groups $G$ in this note are finite.} 

This section delineates the features of the Hardy space for our purpose. A good portion of this section has appeared in the recent preprint \cite{GSR}. When $\Omega$ is a bounded symmetric domain, as it will be from now on, Hardy spaces are well-studied \cite{Hahn-Mitchell, Faraut-Koranyi, Upmeier}. Let $\nu$ be the unique normalized isotropy invariant measure on the \v Silov boundary $\partial \Omega$. The Hardy space is defined as 
\begin{align*}
H^2(\Omega)= \left\lbrace f: f \text{ is holomorphic in } \Omega \text{ and } \sup_{0\leq r <1} \int_{\partial \Omega} |f(r \boldsymbol{\zeta})|^2 d\nu(\boldsymbol{\zeta}) < \infty  \right\rbrace.
\end{align*}
Theorem 6 in \cite{Hahn-Mitchell} notes that  $H^2(\Omega)$  can be isometrically embedded  into $L^2(\partial \Omega, \nu)$. Moreover, $H^2(\Omega)$ has an orthonormal basis which we shall denote by 
$$\{e_\alpha^{(k)}: 1 \leq \alpha \leq N_k=(\begin{smallmatrix} d+k-1 \\ k \end{smallmatrix}) \text{ and } k \in \mathbb{N}\}.$$ 
Here for each $k$, $\{e_\alpha^{(k)}\}$ is the collection of $N_k$ homogeneous polynomials which are unique linear combinations of monomials of degree $k$ in $\bC[\boldsymbol{z}]$. We are not writing the explicit polynomial because we do not need it. The inquisitive reader may refer to page 78 of \cite{Hua}.  Consequently, by Zaremba formula, $H^2(\Omega)$ is a reproducing kernel Hilbert space. The kernel will be denoted by $S_\Omega(\boldsymbol{z}, \boldsymbol{w})$. It is known, see for example page 25 in \cite{Ding} that $S_\Omega(\boldsymbol{z}, \boldsymbol{w})$ and $(S_\Omega(\boldsymbol{z}, \boldsymbol{w}))^{-1}$ are in $\mathcal A (\Omega)$ for every $\boldsymbol{w}$ in $\Omega$. 

When $\Omega$ is a $G$-invariant domain for a pseudo-reflection group $G$, consider the collection $\hat{G}_1$ of one dimensional representations of $G$. For every $\rho$  in the collection $\widehat{G}_1$ of one-dimensional representations of $G$, consider 
$$ R_\rho^G\left( \bC[\boldsymbol{z}] \right)=\left\lbrace f\in \bC[\boldsymbol{z}]: \sigma(f)=\rho(\sigma) f \text{ for all } \sigma \in G \right \rbrace.$$
It is well-known that there exists a homogeneous polynomial $\ell_\rho$ such that each $f \in R_\rho^G\left( \bC[\boldsymbol{z}] \right) $ can be expressed as $f= \ell_\rho \hat{f}$ for some $G$-invariant polynomial $\hat{f}$, see \cite[Theorem 3.1] {Stanley}.   Let $d\mu_{\rho, \theta}$ be the measure on the \v Silov boundary $\partial\theta(\Omega)$ of $\theta(\Omega)$ defined as
$$ \int_{\partial \theta( \Omega)} f  d\mu_{\rho, \theta}=  \frac{1}{|G|}\int_{\partial \Omega} f \circ \theta |\ell_\rho|^2 d\nu.$$
\begin{definition}
Corresponding to the one-dimensional representation $\rho$, the Hardy space of the quotient domain is defined as
$$
H_\rho ^2( \theta(\Omega))=\left \lbrace f:\theta(\Omega) \to \bC \text{ holomorphic and } \ell_\rho (f \circ \theta) \in H^2(\Omega)
  \right \rbrace.
$$
\end{definition}
The space $H_\rho ^2( \theta(\Omega))$ is a reproducing kernel Hilbert space (the kernel is given by \eqref{Reproducing kernel quotient}). As a reproducing kernel Hilbert space, it has an identification with a certain subspace 
 $$R_\rho^G \left( H^2(\Omega)\right) = \left \lbrace f\in H^2(\Omega): \sigma(f)= \rho(\sigma) f, \text{ for all } \sigma \in G \right \rbrace $$ 
 of $ H^2(\Omega)$ via the isomorphism 
 \begin{align}\label{Gamma_rho}
\Gamma_\rho(f) = \frac{1}{\sqrt{|G|}} \ell_\rho (f\circ \theta) \text{ for } f \in H_\rho ^2( \theta(\Omega)). 
\end{align}
Indeed, if $S_\Omega$ is the reproducing kernel for $H^2(\Omega, \mu)$, define the $G$-invariant function 
\begin{align*}
(\boldsymbol{z}, \boldsymbol{w}) \mapsto \frac{1}{\ell_\rho(z) \overline{\ell_\rho(w)}} \sum_{\sigma \in G} \overline{\rho(\sigma)} S_\Omega (\sigma(z), w)
\end{align*} 
on $\Omega \times \Omega$ which is holomorphic in the first argument and anti-holomorphic in the second. By the analytic Chevalley-Shephard-Todd theorem, there is a function $S_{\theta,\rho} $ defined on $\theta(\Omega) \times \theta(\Omega)$ which agrees with the function above.  The one-dimensional representation $\rho$ induces an orthogonal projection $\mathbb{P}_\rho : L^2(\partial \Omega, \nu) \rightarrow L^2(\partial \Omega, \nu)$ defined as
\begin{align}\label{P-rho}
\mathbb{P}_\rho (\psi) = \frac{1}{|G|} \sum_{\sigma \in G} \rho(\sigma^{-1}) \sigma(\psi), \text{ for } \psi \in L^2(\partial \Omega, \nu)
\end{align} 
and a simple calculation shows that
\begin{align}\label{Reproducing kernel quotient}
S_{\rho,\theta} (\theta(\boldsymbol{z}), \theta(\boldsymbol{w})) = \frac{|G|}{\ell_\rho(z) \overline{\ell_\rho(w)}} \mathbb{P}_\rho S_\Omega(\boldsymbol{z}, \boldsymbol{w}).
\end{align}
Then $\Gamma_\rho (S_{\rho,\theta}(\cdot, \theta(\boldsymbol{w})))(\boldsymbol{z}) = \frac{1}{\sqrt{|G|}} \ell_\rho(\boldsymbol{z})S_{\rho,\theta}(\theta(\boldsymbol{z}), \theta(\boldsymbol{w}))$ and for any $f \in H^2_\rho(\theta(\Omega))$ and $\boldsymbol{w} \in \Omega$ we have
\begin{align*}
\left \langle f, S_{\rho,\theta}(\cdot, \theta(\boldsymbol{w})) \right \rangle &= \frac{1}{|G|} \left \langle \ell_\rho (f\circ \theta), \ell_\rho S_{\rho,\theta}(\theta(\cdot), \theta(\boldsymbol{w})) \right \rangle = \frac{1}{|G|} \left \langle \ell_\rho (f\circ \theta), \frac{|G|}{\overline{\ell_\rho(\boldsymbol{w})}} \mathbb{P}_\rho S_\Omega(\cdot, \boldsymbol{w})\right \rangle \\
&= \frac{1}{\ell_\rho(\boldsymbol{w})} \left \langle \ell_\rho (f\circ \theta), S_\Omega(\cdot, \boldsymbol{w}) \right \rangle = f\circ \theta(\boldsymbol{w}). 
\end{align*}
This shows that $S_{\rho,\theta}$ is the reproducing kernel of $H^2_\rho(\theta(\Omega))$.
When we restrict  $\mathbb{P}_\rho$ to $ H^2(\Omega)$, the range  is the space $R_\rho^G \left( H^2(\Omega)\right)$ defined above. We emphasize these identifications of the Hardy space of the quotient domain corresponding to the one-dimensional representation $\rho$. 

Since $\theta$ preserves the \v Silov boundary, an isometric map $\Gamma^\prime_\rho$ analogous to \eqref{Gamma_rho} can be defined on $ L^2 (\partial\theta(\Omega), \mu_{\rho, \theta})$ with the range
$$ R_\rho^G \left( L^2(\partial \Omega, \nu)\right)= \left \lbrace f\in L^2(\partial \Omega, \nu): \sigma(f)= \rho(\sigma) f, \text{ for all } \sigma \in G \right \rbrace .$$ 
This shows that the Hardy space of the quotient domain corresponding to the one-dimensional representation $\rho$ can be isometrically embedded  as a subspace of the $L^2$-space
$$ 
L^2 (\partial\theta(\Omega), \mu_{\rho, \theta})= \left \lbrace f: \partial \theta(\Omega) \to \bC \text{ is } \mu_{\rho, \theta} \text{ measurable and } \int_{\partial \theta(\Omega)} |f|^2 d\mu_{\rho, \theta} <\infty \right \rbrace .
$$
Indeed, we use the following diagram: 
\[
  \begin{tikzcd}
     H_\rho ^2( \theta(\Omega))  \arrow{d}{\Gamma_\rho} \arrow{r}{\Gamma^{\prime*}_\rho \circ W \circ \Gamma_\rho} & L^2(\partial \theta(\Omega), \mu_{\rho, \theta}) \arrow{d}{\Gamma^\prime_\rho} \\
     R_\rho^G \left( H^2(\Omega)\right)  \arrow{r}{W} & R_\rho^G \left( L^2(\partial\Omega, \nu) \right) 
  \end{tikzcd}
\]
where $W$ is the isometry which sends a function in $R_\rho^G \left( H^2(\Omega)\right)$ to its radial limit (or the boundary value) in $R_\rho^G \left( L^2(\partial\Omega, \nu) \right)$, i.e., $W(f)(\bm\zeta)=\lim_{r\to1-}f(r\bm\zeta)$ for almost all $\bm\zeta\in\partial\Omega$ with respect to $\nu$. Such limit is known to exist almost everywhere with respect to $\nu$; see \cite[Theorem 6]{Hahn-Mitchell}. 

\begin{lemma} \label{Radial}
The isometry $\Gamma^{\prime*}_\rho \circ W \circ \Gamma_\rho$ takes a $H_\rho ^2( \theta(\Omega))$ member $f$ to its radial limit $f^*$ given by
$$
f^*(\bm q)=\lim_{r\to1-}f\circ\theta(r\bm\zeta) \quad\mu_{\rho,\theta}\; a.e.\ \mbox{ where }\theta(\bm\zeta)=\bm q.
$$ Furthermore, it takes the $H^\infty$-subspace of $H^2_\rho(\theta(\Omega))$
$$
H^\infty(\theta(\Omega))=\{f:\theta(\Omega)
\to\bC: f \mbox{ is analytic and bounded}\}
$$into the $L^\infty$-subspace of $L^2(\partial \theta(\Omega), \mu_{\rho, \theta})$
$$
L^\infty(\partial \theta(\Omega), \mu_{\rho, \theta})=\{f:\partial \theta(\Omega)\to\bC: f \mbox{ is measurable and essentially bounded w.r.t.\ }\mu_{\rho,\theta}\}.
$$
\end{lemma}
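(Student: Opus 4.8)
\emph{Proof plan.} The plan is to unwind the composition $\Gamma^{\prime*}_\rho \circ W \circ \Gamma_\rho$ one arrow at a time and identify the output with the radial limit $f^*$. Recall that $\Gamma^\prime_\rho$ is, by construction, the surjective isometry
$$\Gamma^\prime_\rho(g) = \frac{1}{\sqrt{|G|}}\,\ell_\rho\,(g\circ\theta), \qquad g \in L^2(\partial\theta(\Omega),\mu_{\rho,\theta}),$$
from $L^2(\partial\theta(\Omega),\mu_{\rho,\theta})$ onto the closed subspace $R_\rho^G\left(L^2(\partial\Omega,\nu)\right)$ of $L^2(\partial\Omega,\nu)$. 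Being a surjective isometry onto a closed subspace, its adjoint $\Gamma^{\prime*}_\rho$ restricts to the inverse $(\Gamma^\prime_\rho)^{-1}$ on $R_\rho^G\left(L^2(\partial\Omega,\nu)\right)$ and annihilates the orthogonal complement. Since $W$ maps into $R_\rho^G\left(L^2(\partial\Omega,\nu)\right)$, the whole composition equals $(\Gamma^\prime_\rho)^{-1}\circ W\circ\Gamma_\rho$. Hence it suffices to exhibit a function $f^*\in L^2(\partial\theta(\Omega),\mu_{\rho,\theta})$ with $\Gamma^\prime_\rho(f^*) = W(\Gamma_\rho f)$, and to check that $f^*$ is the radial limit described in the statement.

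Next I would compute the right-hand side. For $f\in H^2_\rho(\theta(\Omega))$ we have, by the very definition of the Hardy space of the quotient, $\Gamma_\rho(f)=\tfrac{1}{\sqrt{|G|}}\ell_\rho(f\circ\theta)\in H^2(\Omega)$, so by \cite[Theorem 6]{Hahn-Mitchell} its radial boundary value exists $\nu$-a.e.\ and
$$W(\Gamma_\rho f)(\bm\zeta)=\lim_{r\to1-}\frac{1}{\sqrt{|G|}}\,\ell_\rho(r\bm\zeta)\,f(\theta(r\bm\zeta)) \qquad \nu\text{-a.e.}$$
Because $\ell_\rho$ is a homogeneous polynomial it is continuous, whence $\ell_\rho(r\bm\zeta)\to\ell_\rho(\bm\zeta)$. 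The central measure-theoretic point is that the zero set of $\ell_\rho$ is $\nu$-null on $\partial\Omega$: indeed $\ell_\rho$ is a nonzero holomorphic polynomial, its restriction to the connected real-analytic \v Silov boundary $\partial\Omega$ is real-analytic and, since $\sup_{\partial\Omega}|\ell_\rho|=\sup_{\Omega}|\ell_\rho|>0$, not identically zero; hence it vanishes only on a set of $\nu$-measure zero. On the complementary full-measure set we may divide by $\ell_\rho(\bm\zeta)\neq0$ to conclude that $\lim_{r\to1-}f(\theta(r\bm\zeta))$ exists $\nu$-a.e.

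The last step is to descend this pointwise limit to $\partial\theta(\Omega)$ and to verify the identity. Since $\theta\circ\sigma=\theta$ for every $\sigma\in G$, the function $f\circ\theta$ is $G$-invariant, so the limit $\lim_{r\to1-}f(\theta(r\bm\zeta))$ depends only on $\theta(\bm\zeta)=\bm q$; defining $f^*(\bm q)$ to be this common value yields exactly the formula in the statement. By construction $\Gamma^\prime_\rho(f^*)(\bm\zeta)=\tfrac{1}{\sqrt{|G|}}\ell_\rho(\bm\zeta)f^*(\theta(\bm\zeta))=W(\Gamma_\rho f)(\bm\zeta)$ $\nu$-a.e., giving $\Gamma^{\prime*}_\rho\circ W\circ\Gamma_\rho(f)=f^*$. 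For the $H^\infty$ assertion, if $f\in H^\infty(\theta(\Omega))$ then $f\circ\theta$ is bounded on $\Omega$ by $\|f\|_\infty$, so its radial limits are bounded by the same constant $\mu_{\rho,\theta}$-a.e., placing $f^*$ in $L^\infty(\partial\theta(\Omega),\mu_{\rho,\theta})$.

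I expect the main obstacle to be the division-by-$\ell_\rho$ step, which must accomplish two things at once: first, that $\ell_\rho\neq0$ holds $\nu$-a.e.\ on $\partial\Omega$, so that the radial limit of $f\circ\theta$ (and not merely of $\ell_\rho(f\circ\theta)$) is controlled; and second, that the resulting limit is insensitive to the choice of preimage $\bm\zeta$ of $\bm q$ under $\theta$, which is precisely where the $G$-invariance of $f\circ\theta$ must be invoked.
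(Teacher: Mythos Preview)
Your approach is essentially the same as the paper's: both compute $W(\Gamma_\rho f)$ as the radial limit of $\ell_\rho(f\circ\theta)$, invoke that the zero set of $\ell_\rho$ on $\partial\Omega$ is $\nu$-null (the paper cites \cite{Stoll} for this while you sketch a real-analyticity argument), divide out $\ell_\rho$ to recover the radial limit of $f\circ\theta$, and descend to $\partial\theta(\Omega)$ by $G$-invariance.

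Two bookkeeping items that the paper spells out and you glide over: first, the paper checks that the $\nu$-null exceptional set $V$ is $G$-invariant (so $\theta^{-1}(\theta(V))=V$) and hence its image has $\mu_{\rho,\theta}$-measure zero, which is what turns your ``$\nu$-a.e.'' into the ``$\mu_{\rho,\theta}$-a.e.'' asserted in the statement; second, the paper verifies that $f^*$ is actually $\mu_{\rho,\theta}$-measurable by pulling back along $\theta$. Neither is a gap in your argument---both follow readily once stated---but they should be mentioned. The paper also proves the stronger norm equality $\|f^*\|_{L^\infty(\partial\theta(\Omega),\mu_{\rho,\theta})}=\|f\|_{H^\infty(\theta(\Omega))}$ via an inner-regularity argument, though the statement as written only asks for the inclusion you supply.
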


\begin{proof} We need to show the existence of the radial limit. Given $f \in H^2_\rho(\theta(\Omega))$, $\ell_\rho (f\circ \theta) \in R^G_\rho(H^2(\Omega))$, the radial limit 
\begin{align}\label{Eq:radial limit}
(\ell_\rho (f\circ \theta))^*(\boldsymbol{\zeta}) = \lim_{r\to1-} \ell_\rho (r\boldsymbol{\zeta}) (f\circ \theta)(r\boldsymbol{\zeta})= \ell_\rho(\boldsymbol{\zeta}) \lim_{r\to1-} (f\circ \theta)(r\boldsymbol{\zeta}) 
\end{align}
exists for $\nu$-a.e. $\boldsymbol{\zeta} \in \partial \Omega$ and $(\ell_\rho (f\circ \theta))^* \in R^G_\rho L^2(\partial \Omega, \nu)$. In particular $(\ell_\rho (f\circ \theta))^*$ is $\nu$-measurable and so, $|\ell_\rho|^2 d\nu$-measurable. If $V$ is the set of points in $\bT^d$ where the above radial limit does not exist, then by $G$-invariance of $f\circ \theta$, it is clear that $V$ is a $G$-invariant subset of $\bT^d$ i.e., $\sigma(V)= V$ for every $\sigma \in G$. Therefore $\theta^{-1}(\theta(V))=V$. Again,
$$
\mu_{\rho, \theta}(\theta(V))= |\ell_{\rho}^2| d\nu (\theta^{-1}(\theta(V))) = \int_{V} |\ell_{\rho}^2| d\nu =0
$$ 
and so, $\theta(V)$ is $\mu_{\rho, \theta}$-measurable. Again, $\nu(\mathcal{Z}(\ell_\rho) \cap \partial \Omega)=0$ and hence $(|\ell_\rho|^2 d\nu)(\mathcal{Z}(\ell_\rho) \cap \partial \Omega)=0$ where $\mathcal{Z}(\ell_\rho)$ is the zero set of the polynomial $\ell_\rho$ \cite{Stoll}. Now, for $\boldsymbol{q}\in \partial \theta(\Omega)\setminus (\theta(V) \cup \theta(\mathcal{Z}(\ell_\rho))) $ we define our candidate for the radial limit in the quotient domain by 
\begin{align}\label{Eq:radial limit quotient}
f^*(\boldsymbol{q}) = \frac{(\ell_\rho (f \circ \theta))^*(\boldsymbol{\zeta})}{\ell_\rho(\boldsymbol{\zeta})} \ \text{ where } \theta(\boldsymbol{\zeta})= \boldsymbol{q}.
\end{align}
Note that $f^*$ is well-defined as the set $V$ and $\mathcal{Z}(\ell_\rho)$ are $G$-invariant. We define $f^*$ to be zero on $\theta(V) \cup \theta(\mathcal{Z}(\ell_\rho) \cap \partial \Omega)$. To show that $f^*$ is $\mu_{\rho, \theta}$-measurable, take an open set $U$ in $\mathbb{C}$. Since $d\mu_{\rho, \theta}$ on $\partial \theta(\bD^d)$ is the push-forward of the measure $|\ell_\rho^2| d\nu$ on $\bT^d$ under the map $\theta$, ${f^*}^{-1}(U)$ is $\mu_{\rho, \theta}$-measurable if and only if $\theta^{-1}({f^*}^{-1}(U))= (f^* \circ \theta)^{-1}(U)= (\frac{(\ell_\rho (f\circ \theta))^*}{\ell_\rho})^{-1}(U)$ is $|\ell_\rho^2| d\nu$-measurable. Since $\nu(\mathcal{Z}(\ell_\rho) \cap \partial \Omega)=0$, the function $\frac{1}{\ell_\rho}$ is $\nu$-measurable and $|\ell_\rho|^2 d\nu$-measurable. Therefore, $\frac{(\ell_\rho (f\circ \theta))^*}{\ell_\rho}$ is $\nu$-measurable as well as $|\ell_\rho|^2 d\nu$-measurable. Thus $f^*$ is $\mu_{\rho, \theta}$-measurable and $f^* \in L^2(\partial \Omega, \mu_{\rho, \theta})$ with $\|f\|= \|f^*\|$.

Similarly, for $f \in H^\infty(\theta(\Omega)) \subset H^2_\rho(\theta(\Omega))$ we have $f^* \in L^2(\partial \theta(\Omega), \mu_{\rho, \theta})$. Moreover, $(f\circ \theta)^*(\boldsymbol{\zeta})= \lim_{r\to1-} (f\circ \theta)(r\boldsymbol{\zeta})$ exists $\nu$-a.e. $\boldsymbol{\zeta}$ on $\partial \Omega$ as $f \circ \theta$ is in $H^\infty(\Omega)$. Thus from \eqref{Eq:radial limit} and \eqref{Eq:radial limit quotient} we get
$$
f^*(\boldsymbol{q}) = (f\circ \theta)^*(\boldsymbol{\zeta}) \ \text{ where } \theta(\boldsymbol{\zeta}) = \boldsymbol{q} 
$$
for $\mu_{\rho, \theta}$-a.e. $\boldsymbol{q}$ in $\partial \theta(\Omega)$. 

Moreover, $(f\circ \theta)^* \in L^\infty(\bT^d, \nu)$ with $\|f\|_{H^\infty(\theta(\Omega))}=\|f\circ \theta\|_{H^\infty(\Omega)} = \|(f\circ \theta)^*\|_{L^\infty(\partial\Omega, \nu)}$. The measure $|\ell_\rho^2| d\nu$ being absolutely continuous with respect to $\nu$, $(f\circ \theta)^* \in L^\infty(\bT^d, |\ell_\rho^2| d\nu)$ and so, $f^*\in L^\infty(\partial \theta(\Omega), \mu_{\rho, \theta})$ with $M = \|f^*\|_{L^\infty(\partial\theta(\bD^d), \mu_{\rho, \theta})} \leq \|(f\circ \theta)^*\|_{L^\infty(\partial\Omega, \nu)}$. If $M< \|(f\circ \theta)^*\|_{L^\infty(\partial\Omega, \nu)}$, then for set
$$ E= \{ \boldsymbol{\zeta}\in \partial \Omega: M< |(f\circ \theta)^*(\boldsymbol{\zeta})| \leq \|(f\circ \theta)^*\|_{L^\infty(\partial\Omega, \nu)} \}, $$
$\nu(E)>0$. Also, $\nu(E \setminus \mathcal{Z}(\ell_\rho))>0$. By inner regularity of $\nu$, there exists a compact subset $C$ of $\partial \Omega$ such that $C \subset E \setminus \mathcal{Z}(\ell_\rho)$ with $\nu(C)>0$. Therefore,
$$
|\ell_\rho|^2 d\nu (E) \geq \int_{C} |\ell_\rho|^2 d\nu >0 
$$
as $\inf_C |\ell_\rho|^2 >0$. Note that $E$ is $G$-invariant set and hence $\theta^{-1}(\theta(E))=E$. This implies that $\mu_{\rho, \theta}(\theta(E))= |\ell_\rho|^2 d\nu (E) >0$. This is absurd as $\theta(E)$ consists of all points $\boldsymbol{q}$ in $\partial \theta(\Omega)$ such that $\|f^*\|_{L^\infty(\partial \theta(\Omega), \mu_{\rho, \theta})} < \|f^*(\boldsymbol{q})\|$. Hence we must have $\|f^*\|_{L^\infty(\partial \theta(\Omega), \mu_{\rho, \theta})} = \|(f\circ \theta)^*\|_{L^\infty(\partial \Omega, \nu)}$. This implies that $f \mapsto f^*$ is an isometric embedding of $H^\infty(\theta(\bD^d))$ into $L^\infty(\partial \theta(\bD^d), \mu_{\rho, \theta})$.
  
\end{proof}

Let $M_{p_j}$ be the operator of co-ordinate multiplication by $p_j$ on $L^2(\partial\theta(\Omega))$ for $1\leq j \leq d$. Let $T_{p_j}=  M_{p_j}|_{H_\rho ^2( \theta(\Omega))} $ be the $j$-th co-ordinate multiplier on $H_\rho ^2( \theta(\Omega))$. It is clear that the operators $M_{p_j}$ and $T_{p_j}$ are unitary equivalent to $M_{\theta_j}$ on $R_\rho^G L^2(\partial \Omega)$ and $T_{\theta_j}$ on $R_\rho^G(H^2(\Omega))$ respectively via the unitary map $\Gamma^\prime_\rho$ and $\Gamma_\rho$ respectively.

\section{The small Hankel operator} \label{smallH}
\begin{definition} 
For $\varphi \in H_\rho^2(\theta(\Omega))$, the \textit{small Hankel operator} $h_\varphi$ on $H_\rho^2(\theta(\Omega))$ is a conjugate-linear densely defined operator defined as $$ h_\varphi (f) \defeq P_{H_\rho^2(\theta(\Omega))}(\varphi \bar{f}),\text{ for all holomorphic polynomials } f\in H_\rho^2(\theta(\Omega)),$$ where $P_{H_\rho^2(\theta(\Omega))}$ is the orthogonal (Szeg\"o) projection from $L^2(\partial \theta(\Omega),\mu_{\rho, \theta})$ onto $H_\rho^2(\theta(\Omega))$.
\end{definition}
Although our small Hankel operator is conjugate-linear, some authors prefer to view the small Hankel operator as a linear operator. They take the complex-conjugate of $h_\varphi (f)$ and hence the range of the operator in that case is the complex-conjugate of $H_\rho^2(\theta(\Omega))$.

The results of this section, viz., characterization of boundedness and compactness of small Hankel operators via the dual space of a weak-product space are motivated by \cite{CRW} and \cite{Krantz-Li}.

 Characterization of the boundedness of small Hankel operators is related to {\em factorization of $H^1$ functions}. In the case of the open unit disc $\bD$, the Riesz factorization of $H^1(\bD)$ functions plays a crucial role in proving Nehari's theorem. Exact analogues of Nehari's theorem have been proved in the case of the Euclidean unit ball \cite{CRW} and more generally on smoothly bounded strongly pseudoconvex domains \cite{Krantz-Li}. The characterizations in these cases have been obtained via a weak factorization and atomic decompositions. An analogue in the case of bidisc is still unknown.  

\begin{thm}\label{alg-characterization}
Suppose $A: H_\rho^2(\theta(\Omega)) \to H_\rho^2(\theta(\Omega)) $ be a bounded conjugate-linear operator. Then, $A$ is a small Hankel operator if and only if $T_{p_j}^* A = A T_{p_j}$ for $j=1,\dots, d$.
\end{thm}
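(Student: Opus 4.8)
The plan is to prove both implications by reducing everything to the action of the coordinate multipliers $T_{p_j}$ on the dense subspace of holomorphic polynomials, resting on one structural identity that I would record first. For $g\in H_\rho^2(\theta(\Omega))$ and $h\in H_\rho^2(\theta(\Omega))$ one has $\langle T_{p_j}^* g,h\rangle=\langle g,p_j h\rangle=\langle \bar p_j g,h\rangle$, and since $h$ lies in the Hardy space this gives
\[
T_{p_j}^* g = P(\bar p_j\, g),\qquad g\in H_\rho^2(\theta(\Omega)),
\]
where $P$ is the Szeg\"o projection. At the same time I would note the companion fact that multiplication by $\bar p_j$ leaves $H_\rho^2(\theta(\Omega))^\perp$ invariant: if $u\perp H_\rho^2(\theta(\Omega))$ and $h\in H_\rho^2(\theta(\Omega))$, then $\langle \bar p_j u,h\rangle=\langle u,p_j h\rangle=0$ because $p_j h=T_{p_j}h$ again belongs to $H_\rho^2(\theta(\Omega))$.

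For necessity, suppose $A=h_\varphi$. For a holomorphic polynomial $f$ I would compute on one hand $h_\varphi(T_{p_j}f)=P(\varphi\,\overline{p_j f})=P(\bar p_j\,\varphi\bar f)$ and on the other $T_{p_j}^*h_\varphi(f)=P(\bar p_j\,P(\varphi\bar f))$. Their difference equals $P\big(\bar p_j[\varphi\bar f-P(\varphi\bar f)]\big)$; since $\varphi\bar f-P(\varphi\bar f)\in H_\rho^2(\theta(\Omega))^\perp$, the invariance of the complement under multiplication by $\bar p_j$ forces the difference to vanish. Hence $T_{p_j}^*h_\varphi=h_\varphi T_{p_j}$ on polynomials, and by boundedness and conjugate-linearity everywhere. (This algebraic identity on polynomials does not use boundedness and will be reused below.)

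For sufficiency I would set $\varphi:=A(\mathbf 1)$, which lies in $H_\rho^2(\theta(\Omega))$ because $A$ maps into it, the constant $\mathbf 1$ itself belonging to $H_\rho^2(\theta(\Omega))$ since $\ell_\rho(\mathbf 1\circ\theta)=\ell_\rho\in R_\rho^G(H^2(\Omega))$. The goal is $A=h_\varphi$, and as both operators are bounded and conjugate-linear it suffices to prove agreement on the dense space of holomorphic polynomials. Invoking the identification $H_\rho^2(\theta(\Omega))\cong R_\rho^G(H^2(\Omega))$ together with the analytic Chevalley--Shephard--Todd theorem, under which $R_\rho^G(\bC[\boldsymbol z])=\ell_\rho\,\bC[\boldsymbol z]^G=\ell_\rho\,\bC[\theta_1,\dots,\theta_d]$, the holomorphic polynomials in $H_\rho^2(\theta(\Omega))$ are exactly $\bC[p_1,\dots,p_d]$ and each monomial is $p^\alpha=T_{p_1}^{\alpha_1}\cdots T_{p_d}^{\alpha_d}\mathbf 1$. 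I would then induct on $|\alpha|$: the base case is $A(\mathbf 1)=\varphi=P(\varphi)=h_\varphi(\mathbf 1)$, and for the step, writing $p^\alpha=T_{p_j}p^{\alpha-e_j}$ with $\alpha_j\ge1$, the hypothesis on $A$ and the intertwining for $h_\varphi$ give
\[
A(p^\alpha)=T_{p_j}^*A(p^{\alpha-e_j})=T_{p_j}^*h_\varphi(p^{\alpha-e_j})=h_\varphi(T_{p_j}p^{\alpha-e_j})=h_\varphi(p^\alpha).
\]
Thus $A$ agrees with the densely defined $h_\varphi$ on all polynomials, so $A$ is the bounded extension, i.e.\ the small Hankel operator with symbol $\varphi$.

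The main obstacle I anticipate is not the induction, which closes cleanly once the intertwining is in hand for both operators, but justifying the two structural inputs it relies on: that $\mathbf 1\in H_\rho^2(\theta(\Omega))$ and that the monomials $p^\alpha$ are simultaneously dense and generated from $\mathbf 1$ by the $T_{p_j}$. Both require care with the representation-theoretic description of the quotient Hardy space --- namely that $G$-invariant polynomials are polynomials in the basic invariants $\theta_j$, so that multiplication by $\ell_\rho$ carries $\bC[p]$ onto $R_\rho^G(\bC[\boldsymbol z])$ --- together with the density of polynomials in $H^2(\Omega)$ transported through $\mathbb P_\rho$ and the isomorphism $\Gamma_\rho$.
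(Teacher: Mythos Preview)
Your proof is correct and follows essentially the same approach as the paper. Both arguments verify the intertwining $T_{p_j}^*h_\varphi=h_\varphi T_{p_j}$ directly, then for the converse set $\varphi=A(\mathbf 1)$ and use the relations together with density of polynomials; the only cosmetic differences are that the paper checks the forward direction via a bare integral computation $\langle T_{p_j}^*h_\varphi f,g\rangle=\int\varphi\overline{fp_jg}=\langle h_\varphi T_{p_j}f,g\rangle$, while you route it through the projection identity $T_{p_j}^*g=P(\bar p_j g)$ and invariance of $(H_\rho^2)^\perp$ under $M_{\bar p_j}$, and the paper compresses your induction into the single line $\langle A(f),g\rangle=\langle T_f^*A(\mathbf 1),g\rangle=\langle\varphi,fg\rangle=\langle h_\varphi(f),g\rangle$.
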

\begin{proof}
Suppose $A = h_\varphi$ is a bounded small Hankel operator on $H_\rho^2(\theta(\Omega))$ with $\varphi = A(1)$. For $f,g \in H_\rho^2(\theta(\Omega))$, 
\begin{align*}
\langle T_{p_j}^* h_\varphi(f), g \rangle = \langle P_{H_\rho^2(\theta(\Omega))}(\varphi \bar{f}), p_j g \rangle = \int_{\partial \theta(\Omega)} \varphi \overline{f p_j g} \ d\mu_{\rho, \theta} \text{ and }
\end{align*}
\begin{align*}
\langle h_\varphi T_{p_j} f, g \rangle = \langle P_{H_\rho^2(\theta(\Omega))}(\varphi \bar{p_j} \bar{f}), g \rangle = \int_{\partial \theta(\Omega)} \varphi \overline{f p_j g}\  d\mu_{\rho, \theta}.
\end{align*} 
Therefore $T_{p_j}^* h_\varphi = h_\varphi T_{p_j}$. 

Conversely, assume that $A$ is bounded conjugate-linear operator satisfying the given algebraic relations. It is easy to see that $T_\psi^* A = A T_\psi$ for every polynomial $\psi$ in $p_1,\dots, p_d$. Define, $\varphi = A(1)$. Let $f$ and $g$ be any two polynomials in $p_1,\dots, p_d$. Then 
\begin{align*}
\langle A(f), g \rangle = \langle A T_f (1), g \rangle = \langle T_f^* A(1), g \rangle = \langle \varphi, fg \rangle= \langle h_\varphi (f), g \rangle.
\end{align*} 
Density of the polynomials in $p_1,\dots, p_d$ in the space $H_\rho^2(\theta(\Omega))$ implies that $A= h_\varphi$.
\end{proof}

The {\em weak product} space of $H_\rho^2(\theta(\Omega))$ is defined by 
$$ 
H_\rho^2(\theta(\Omega)) \odot H_\rho^2(\theta(\Omega)) = \left \lbrace  \sum_{j=1}^\infty f_j g_j : \sum_{j=1}^\infty \|f_j\| \|g_j\| <\infty \text{ where } f_j, g_j \in H_\rho^2(\theta(\Omega))  \right \rbrace
$$
where the norm is given by 
$$
\|\psi\|_\odot = \inf \left \lbrace \sum_{j=1}^\infty \|f_j\| \|g_j\|: \psi=\sum_{j=1}^\infty f_j g_j  \right \rbrace.
$$
It can be checked that this is a Banach space of analytic functions on $\theta(\Omega)$ and the point evaluations at points of $\theta(\Omega)$ are continuous. The idea stems from \cite{CRW}. See \cite{Hartz, Martin} for a recent works on weak product spaces in more general settings.
Following \cite{Richter-Sundberg}, we also consider the holomorphic polynomials $\bC[\boldsymbol{z}]$ which are dense in $H^2(\Omega)$. Define 
\begin{align*}
\bC[\boldsymbol{p}] \bullet \bC[\boldsymbol{p}]= \left\lbrace \sum_{j=1}^n g_j q_j : n\in \mathbb{N}, g_j, q_j \in \bC[\boldsymbol{p}] \right \rbrace.
\end{align*}
For $f \in \bC[\boldsymbol{p}] \bullet \bC[\boldsymbol{p}]$, we define 
\begin{align*}
\|f\|_\bullet = \inf \left \lbrace \sum_{j=1}^n \|g_j \| \|q_j\|: f= \sum_{j=1}^n g_j q_j \text{ with } g_j , q_j \in \bC[\boldsymbol{p}]  \right \rbrace.
\end{align*}
Then $(\bC[\boldsymbol{p}] \bullet \bC[\boldsymbol{p}], \|\cdot\|_\bullet)$ is a normed linear space and also $\|f\|_\odot \leq \|f\|_\bullet $ for $f$ in $\bC[\boldsymbol{p}] \bullet \bC[\boldsymbol{p}]$. Therefore, the inclusion $j: \bC[\boldsymbol{p}] \bullet \bC[\boldsymbol{p}] \rightarrow  H_\rho^2(\theta(\Omega)) \odot H_\rho^2(\theta(\Omega))$ extends as a contraction from the completion $(\bC[\boldsymbol{p}] \bullet \bC[\boldsymbol{p}])_\bullet $ of $\bC[\boldsymbol{p}] \bullet \bC[\boldsymbol{p}]$ with respect to $\|\cdot \|_\bullet $ to $H_\rho^2(\theta(\Omega)) \odot H_\rho^2(\theta(\Omega))$. We denote this extension by $J$.
\begin{proposition} \label{P: Norm equality}
For $f \in \bC[\boldsymbol{p}] \bullet \bC[\boldsymbol{p}]$, $\|f\|_\bullet = \|f\|_\odot$.
\end{proposition}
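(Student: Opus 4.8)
The inequality $\|f\|_\odot\le\|f\|_\bullet$ is already recorded, so the entire task is the reverse bound $\|f\|_\bullet\le\|f\|_\odot$ for $f\in\bC[\boldsymbol{p}]\bullet\bC[\boldsymbol{p}]$. My plan is to reduce this to one structural fact, namely that the contraction $J$ is \emph{injective} (equivalently, that the completion $(\bC[\boldsymbol{p}]\bullet\bC[\boldsymbol{p}])_\bullet$ is genuinely a space of holomorphic functions on $\theta(\Omega)$, carrying no elements that vanish as functions without vanishing in norm), and then to prove that injectivity by a dilation argument. The hard part is precisely this injectivity; everything else is routine approximation.

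First the reduction. Fix $\varepsilon>0$ and start from a factorization $f=\sum_j f_jg_j$ with $f_j,g_j\in H_\rho^2(\theta(\Omega))$ and $\sum_j\|f_j\|\,\|g_j\|<\|f\|_\odot+\varepsilon$. Since $\bC[\boldsymbol{p}]$ is dense in $H_\rho^2(\theta(\Omega))$, I approximate each factor by polynomials to obtain a polynomial-product series with $\sum\|P_j\|\,\|Q_j\|<\|f\|_\odot+2\varepsilon$ whose $\odot$-sum is close to $f$. Iterating this approximation on the (small) residual — a standard telescoping/successive-approximation step — upgrades it to an absolutely $\|\cdot\|_\bullet$-convergent series $\sum_i P_iQ_i$ with $\sum_i\|P_i\|\,\|Q_i\|<\|f\|_\odot+3\varepsilon$ whose $\odot$-sum equals $f$ exactly. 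This series converges in the completion to an element $x$ with $\|x\|_\bullet\le\|f\|_\odot+3\varepsilon$, and as a function $J(x)=f=J(f)$. If $J$ is injective this forces $x=f$, whence $\|f\|_\bullet=\|x\|_\bullet\le\|f\|_\odot+3\varepsilon$; letting $\varepsilon\downarrow0$ finishes the proof.

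To prove that $J$ is injective I would introduce dilations. For $0<r<1$ the map $(D_rF)(\boldsymbol{z})=F(r\boldsymbol{z})$ is a contraction of $H^2(\Omega)$ (diagonal, with eigenvalue $r^k$, in the homogeneous orthonormal basis) preserving the $\rho$-isotypic subspace $R_\rho^G(H^2(\Omega))$; transporting it through $\Gamma_\rho$ and using the homogeneity of the $\theta_j$ gives $\tilde D_r=r^m\hat D_r$ on $H_\rho^2(\theta(\Omega))$, where $m=\deg\ell_\rho$ and $\hat D_rg(\boldsymbol{q})=g(r^{d_1}q_1,\dots,r^{d_d}q_d)$, $d_i=\deg\theta_i$, is the weighted scaling, which is multiplicative. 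Setting $\mathcal{D}_r=r^{2m}\hat D_r$ one checks $\mathcal{D}_r(fg)=(\tilde D_rf)(\tilde D_rg)$, so $\mathcal{D}_r$ is a well-defined contraction for both $\|\cdot\|_\odot$ and $\|\cdot\|_\bullet$, it maps $\bC[\boldsymbol{p}]$ into itself, and $\mathcal{D}_rP\to P$ in $\|\cdot\|_\bullet$ as $r\uparrow1$ for every polynomial $P$; a three-$\varepsilon$ argument then yields $\mathcal{D}_r\to I$ strongly on the completion.

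Now let $x=\lim_kF_k$ in the completion, with $F_k\in\bC[\boldsymbol{p}]\bullet\bC[\boldsymbol{p}]$ and $J(x)=0$. Continuity of point evaluations gives $F_k(\boldsymbol{q})\to0$ for every $\boldsymbol{q}\in\theta(\Omega)$, while the bound $|F_k(\boldsymbol{q})|\le C_{\boldsymbol{q}}\|F_k\|_\bullet$ makes $\{F_k\}$ a normal family, so $F_k\to0$ locally uniformly. Fixing $r$, the polynomials $\mathcal{D}_rF_k$ equal $r^{2m}F_k\circ\hat D_r$, and since $\hat D_r$ maps $\overline{\theta(\Omega)}$ (in particular $\partial\theta(\Omega)$) into a compact subset of $\theta(\Omega)$, they tend to $0$ uniformly on $\overline{\theta(\Omega)}$, hence $\|\mathcal{D}_rF_k\|\to0$ in $H_\rho^2(\theta(\Omega))$. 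The trivial factorization $Q=1\cdot Q$ gives $\|Q\|_\bullet\le\|1\|\,\|Q\|$ for every $Q\in\bC[\boldsymbol{p}]$, so $\|\mathcal{D}_rF_k\|_\bullet\to0$ and therefore $\mathcal{D}_rx=0$ for each $r$. Letting $r\uparrow1$ and using $\mathcal{D}_r\to I$ yields $x=0$, proving injectivity. This injectivity of $J$ is the one genuinely substantial point; once it is in hand the norm equality follows from the reduction above.
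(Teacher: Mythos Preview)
Your proof is correct and follows essentially the same route as the paper's: both reduce the norm equality to injectivity of $J$ and establish the latter via the same dilation operators (your $\mathcal{D}_r$ is exactly the paper's $\delta_r$, and the normal-family\,/\,uniform-convergence-on-$\overline{\theta(\Omega)}$ argument is identical). The only difference is cosmetic: for the reduction step the paper invokes \cite[Theorem~1.1]{Richter-Sundberg} (asserting that the induced map $(\bC[\boldsymbol{p}] \bullet \bC[\boldsymbol{p}])_\bullet / \operatorname{Ker}J \to H_\rho^2(\theta(\Omega)) \odot H_\rho^2(\theta(\Omega))$ is an onto isometry), whereas you unpack that citation into the telescoping/successive-approximation argument you describe.
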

\begin{proof} 
Since $\bC[\boldsymbol{p}]$ is dense in $H_\rho^2(\theta(\Omega))$, the map $\tilde{J}: (\bC[\boldsymbol{p}] \bullet \bC[\boldsymbol{p}])_\bullet / \operatorname{Ker}(J) \rightarrow H_\rho^2(\theta(\Omega)) \odot H_\rho^2(\theta(\Omega)) $, induced by $J$ is onto and isometric \cite[Theorem 1.1]{Richter-Sundberg}. This shows that for $f \in \bC[\boldsymbol{p}] \bullet \bC[\boldsymbol{p}]$, $\|f\|_\bullet = \|f\|_\odot$ if and only if $\operatorname{Ker}J$ is trivial. By an argument motivated from the proof of Theorem 1.2 of \cite{Richter-Sundberg}, we shall show that $\operatorname{Ker}(J)=\{0\}$.

Let $m_j$ denote the degree of the homogeneous polynomial $\theta_j$ for $1\leq j \leq d$ and $m_0$ is the degree of the homogeneous polynomial $\ell_\rho$. Denote $\boldsymbol{m} =(m_1, \cdots, m_d)$ and $r^{\boldsymbol{m}} \boldsymbol{p}= (r^{m_1}p_1,\dots, r^{m_d} p_d)$.
Since a bounded symmetric domain is star-like about origin and since the $\theta_j$ are homogeneous polynomials,  define 
$$ f_r( \boldsymbol{p}) \defeq r^{2m_0}f(r^{\boldsymbol{m}} \boldsymbol{p}) \ \text{ for all } f\in H_\rho^2(\theta(\Omega)),  \boldsymbol{p} \in \overline{\theta(\Omega)} \text{ and }  r\in (0,1).$$

\noindent Therefore, 
$$\Gamma_\rho (f_r)(\boldsymbol{z}) =\frac{1}{\sqrt{|G|}} r^{2m_0} \ell_\rho (\boldsymbol{z}) f(r^{m_1}\theta_1(\boldsymbol{z}),\dots, r^{m_d} \theta_d(\boldsymbol{z}))= \frac{1}{\sqrt{|G|}} r^{m_0} (\ell_\rho f\circ \theta)(r \boldsymbol{z})$$ 
for $\boldsymbol{z} \in \Omega$.
Clearly, $f_r$ is holomorphic in a neighbourhood of $\overline{\theta(\Omega)}$ and hence the corresponding boundary value functions are in $C(\partial \theta(\Omega))$. Consider the linear operator 
$$\delta_r : H_\rho^2(\theta(\Omega)) \to H_\rho^2(\theta(\Omega)) \quad \text{ such that } \quad \delta_r (f) = f_r.$$ 
We list some properties of these operators $\{\delta_r \}$ as a lemma below and resume the proof of the proposition after it.

\begin{lemma}\label{Hart-L2} \leavevmode
\begin{itemize}
\item[(i)] $\| \delta_r\| \leq 1$ for all $r\in [0,1)$.
\item[(ii)] Each $\delta_r $ is self-adjoint. 
\item[(iii)] As $r \to 1^-$, $\delta_r$ converges to the identity operator on ${H_\rho^2(\theta(\Omega))}$ in strong operator topology. 
\item[(iv)] $\delta_r$ extends as a contraction with respect to $\|\cdot\|_\bullet$ from $\bC[\boldsymbol{p}] \bullet \bC[\boldsymbol{p}]$ to its completion. Moreover, $\delta_r(f) \xrightarrow{\|\cdot\|_\bullet} f$ for $f \in (\bC[\boldsymbol{p}] \bullet \bC[\boldsymbol{p}])_\bullet$.
\end{itemize}
\end{lemma}
\begin{proof}
The first one is quite obvious. For the second one, take $f, g\in H_\rho^2(\theta(\Omega))$. Then
\begin{align*}
\langle \delta_r(f), g \rangle & = \langle \Gamma_\rho(f_r), \Gamma_\rho(g) \rangle \\ 
                               &= \frac{1}{|G|}\int_{\partial \Omega} r^{m_0} (\ell_\rho f\circ \theta)(r\boldsymbol{\zeta})\overline{(\ell_\rho g\circ \theta)} (\boldsymbol{\zeta}) d\nu(\boldsymbol{\zeta})\\
                               &= \frac{r^{m_0}}{|G|} \sum_{k=0}^\infty \sum_{\alpha=1}^{N_k} r^k a_\alpha^{(k)} \bar{b}_\alpha^{(k)} \\
                               &=\frac{1}{|G|} \int_{\partial \Omega} r^{m_0} (\ell_\rho f\circ \theta)(\boldsymbol{\zeta})\overline{(\ell_\rho g\circ \theta)} (r\boldsymbol{\zeta}) d\nu(\boldsymbol{\zeta}) \\
                               & = \langle \Gamma_\rho(f), \Gamma_\rho(g_r) \rangle = \langle f, \delta_r(g) \rangle
\end{align*}
where the fourth step is obtained by considering the expressions (in terms of the orthonormal basis $\{e_\alpha^{(k)}\}$ consisting of homogeneous polynomials) $\ell_\rho (f\circ \theta) = \sum_{k=0}^\infty \sum_{\alpha=1}^{N_k} a_\alpha^{(k)} e_\alpha^{(k)} $ and $\ell_\rho (g\circ \theta) =\sum_{k=0}^\infty \sum_{\alpha=1}^{N_k} b_\alpha^{(k)} e_\alpha^{(k)} $ with $\sum_{k=0}^\infty \sum_{\alpha=1}^{N_k} |a_\alpha^{(k)}|^2 = \|\ell_\rho (f\circ \theta)\|_{H^2(\Omega)} $ and $\sum_{k=0}^\infty \sum_{\alpha=1}^{N_k} |b_\alpha^{(k)}|^2 = \|\ell_\rho (g\circ \theta)\|_{H^2(\Omega)}$.

To prove (iii), for $f\in H_\rho^2(\theta(\Omega))$ we have,
$$ \|\delta_r(f) - f\|^2  = \|\Gamma_\rho(f_r -f ) \|^2 =\frac{1}{|G|} \int_{\partial \Omega} |r^{m_0} (\ell_\rho f\circ \theta)(r \boldsymbol{\zeta})- (\ell_\rho f\circ \theta)(\boldsymbol{\zeta}) |^2 d\nu(\boldsymbol{\zeta}) \rightarrow 0 \ \text{ as } r\rightarrow 1^-.$$

For the proof of (iv), consider $f = \sum_{j=1}^n f_j g_j $ in $\bC[\boldsymbol{p}] \bullet \bC[\boldsymbol{p}]$. Then, $\delta_r(f)(\boldsymbol{p})= \sum_{j=1}^n r^{2m_0} f_j(r^{\boldsymbol{m}} \boldsymbol{p}) g_j(r^{\boldsymbol{m}} \boldsymbol{p})$ and so,
\begin{align*}
\|\delta_r(f)\|_\bullet & \leq  \sum_{j=1}^n \|r^{m_0} f_j(r^{\boldsymbol{m}} \boldsymbol{p}) \| \|r^{m_0} g_j(r^{\boldsymbol{m}}\boldsymbol{p})\| \\
                        & = \sum_{j=1}^n \| \ell_\rho(r \boldsymbol{z}) f_j \circ \theta(r \boldsymbol{z}) \|  \| \ell_\rho(r \boldsymbol{z}) g_j \circ \theta(r \boldsymbol{z}) \| \\
                        & \leq \sum_{j=1}^n  \| \ell_\rho (f_j \circ \theta) \| \| \ell_\rho (g_j \circ \theta) \| = \sum_{j=1}^n  \| f_j\| \| g_j\|.
\end{align*}
This implies that $\|\delta_r(f)\|_\bullet \leq \|f\|_\bullet$. Therefore $\delta_r$ is contractive on $\bC[\boldsymbol{p}] \bullet \bC[\boldsymbol{p}]$ and hence it extends contractively to its completion. Also, we can write the polynomial $\delta_r(f) - f$ as $1 \cdot (\delta_r(f) - f)$ and hence $\|\delta_r(f) - f\|_\bullet \leq \|1\| \|\delta_r(f)-f\| \rightarrow 0$ as $r \rightarrow 1^{-}$. Finally, for any $g$ in $(\bC[\boldsymbol{p}] \bullet \bC[\boldsymbol{p}])_\bullet$, there exists a sequence $g_n$ in $\bC[\boldsymbol{p}] \bullet \bC[\boldsymbol{p}]$ such that $g_n \xrightarrow{\|\cdot \|_\bullet} f$. Thus, for $\epsilon >0$ choose $n$ such that $\|g_n -g\|_\bullet < \epsilon$ and for this $g_n$ there exists $0<s<1$ such $\|\delta_r(g_n)- g_n\|<\epsilon$ whenever $r \in (s,1)$. So,
\begin{align*}
\|\delta_r(g)-g\|_\bullet \leq \|\delta_r(g)-\delta_r(g_n)\|_\bullet + \|\delta_r(g_n) - g_n\|_\bullet + \|g_n -g\|_\bullet < 3\epsilon
\end{align*}
for $r \in (s,1)$. This proves the last part of statement (iv).
\end{proof}

\noindent {\em Continuation of the proof of \cref{P: Norm equality}}: 
By the discussion above, it is enough to prove that $\operatorname{Ker}J$ is trivial. Let $f \in \operatorname{Ker}(J)$. Now, approximate $f$ by a sequence $\{f_n\}_n$ in $\bC[\boldsymbol{p}] \bullet \bC[\boldsymbol{p}]$ with $\|\cdot\|_\bullet$. By continuity of $J$ and the point evaluations in $\theta(\Omega)$ we have $f_n \xrightarrow{\|\cdot\|_\odot} Jf =0$ and $f_n(\boldsymbol{p}) \rightarrow 0$ and $|f_n(\boldsymbol{p})| \leq \|S_{\rho, \theta}(\cdot, \boldsymbol{p})\| \|f_n\|_\odot$ for every $\boldsymbol{p}$ in $\theta(\Omega)$. Since $S_{\rho, \theta}(\cdot, \boldsymbol{p})$ is holomorphic in the first argument and anti-holomorphic in the second argument on $\theta(\Omega) \times \theta(\Omega)$ and continuous on $\theta(\Omega) \times \overline{\theta(\Omega)}$, the norm $\|S_{\rho, \theta}(\cdot, \boldsymbol{p})\|$ is bounded whenever $\boldsymbol{p}$ varies over a compact subset of $\theta(\Omega)$. Thus $\{f_n\}$ is bounded on every compact subset of $\theta(\Omega)$ and hence it converges to $0$ uniformly on compact subsets of $\theta(\Omega)$. 
Fix $0\leq r<1$. Note that, for each $n\in \mathbb{N}$, $\delta_r(f_n)(\theta(\boldsymbol{z}))= r^{2m_0} f_n \circ \theta (r \boldsymbol{z}) $ which is holomorphic in a neighbourhood of $\overline{\theta(\Omega)}$ and so, $\delta_r(f_n)$ converges to $0$ uniformly on $\overline{\theta(\Omega)}$. Therefore, $\|\delta_r(f_n)\| \rightarrow 0$ as $ n \rightarrow \infty$. 
Finally the following estimate,
\begin{align*}
 \|\delta_r(f) \|_\bullet \leq \|\delta_r(f) -\delta_r(f_n)\|_\bullet + \|\delta_r(f_n)\|_\bullet \leq \|f-f_n\|_\bullet + \|\delta_r(f_n)\| 
\end{align*}
shows that $\delta_r(f) =0$ for each $r$. Hence, $f=0$.
\end{proof}

Let $\textup{s-Hank}(\partial \theta(\Omega))$ be the set of all $L^2(\partial \theta(\Omega), \mu_{\rho, \theta})$ symbols $\varphi$ for which $h_\varphi$ is bounded. Then for $\varphi \in H^2_\rho (\theta(\Omega)) \cap \textup{s-Hank}(\partial \theta(\Omega)) $ we define $\|\varphi\|_{\textup{s-Hank}(\partial \theta(\Omega))}= \|h_\varphi\|$. This makes $H^2_\rho (\theta(\Omega)) \cap \textup{s-Hank}(\partial \theta(\Omega))$ a normed linear space.

\begin{thm}\label{dual-tensor-product}
 The space $H^2_\rho (\theta(\Omega)) \cap \textup{s-Hank}(\partial \theta(\Omega))$ is isometrically isomorphic to the continuous dual $\left(H_\rho^2(\theta(\Omega)) \odot H_\rho^2(\theta(\Omega)) \right)^*$ of the weak product space via the following map: $ \varphi \mapsto L_\varphi$, where $L_\varphi (f)= \int_{\partial \theta(\Omega)} f \overline{\varphi} d\mu_{\rho, \theta}$ for every $f \in H_\rho^2(\theta(\Omega)) \odot H_\rho^2(\theta(\Omega))$.
\end{thm}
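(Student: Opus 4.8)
The plan is to build a dictionary relating three objects: bounded small Hankel operators $h_\varphi$, bounded bilinear forms on $H^2_\rho(\theta(\Omega))$, and bounded functionals on the weak product. The hinge is the elementary identity, valid for holomorphic polynomials $f,g$,
\begin{align*}
\langle h_\varphi(f),g\rangle = \langle \varphi\bar f,g\rangle = \int_{\partial\theta(\Omega)}\varphi\,\overline{fg}\,d\mu_{\rho,\theta}=\overline{L_\varphi(fg)},
\end{align*}
which holds because $g\in H^2_\rho(\theta(\Omega))$ and $P_{H^2_\rho(\theta(\Omega))}$ is the orthogonal projection; thus the sesquilinear form $(f,g)\mapsto\langle h_\varphi(f),g\rangle$ is the complex conjugate of $L_\varphi$ evaluated on the product $fg$. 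I would first check that $\varphi\mapsto L_\varphi$ is well defined and contractive. Since $|L_\varphi(fg)|=|\langle h_\varphi(f),g\rangle|\leq\|h_\varphi\|\,\|f\|\,\|g\|$, for any representation $\psi=\sum_j f_jg_j$ the triangle inequality gives $|L_\varphi(\psi)|\leq\|h_\varphi\|\sum_j\|f_j\|\|g_j\|$, and passing to the infimum over representations yields $|L_\varphi(\psi)|\leq\|h_\varphi\|\,\|\psi\|_\odot$. Hence $L_\varphi$ extends to a bounded functional with $\|L_\varphi\|\leq\|h_\varphi\|=\|\varphi\|_{\textup{s-Hank}(\partial\theta(\Omega))}$.

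For the reverse inequality I would use that $\|fg\|_\odot\leq\|f\|\,\|g\|$, so that whenever $\|f\|,\|g\|\leq1$ we have $|\langle h_\varphi(f),g\rangle|=|L_\varphi(fg)|\leq\|L_\varphi\|\,\|fg\|_\odot\leq\|L_\varphi\|$; taking the supremum over such $f,g$ gives $\|h_\varphi\|\leq\|L_\varphi\|$. Together these show that $\varphi\mapsto L_\varphi$ is isometric. The density of $\bC[\boldsymbol p]\bullet\bC[\boldsymbol p]$ in the weak product, guaranteed by \cref{P: Norm equality}, ensures that $L_\varphi$ is determined by its values on polynomial products, so no ambiguity arises from the choice of representation.

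The substantive step is surjectivity: every $L\in\left(H^2_\rho(\theta(\Omega))\odot H^2_\rho(\theta(\Omega))\right)^*$ equals some $L_\varphi$. Given such an $L$, the form $\beta(f,g):=L(fg)$ is bilinear on $H^2_\rho(\theta(\Omega))$ and bounded because $|\beta(f,g)|\leq\|L\|\,\|fg\|_\odot\leq\|L\|\,\|f\|\,\|g\|$. Representing each bounded linear functional $g\mapsto\beta(f,g)$ by the Riesz lemma produces a bounded conjugate-linear operator $A$ on $H^2_\rho(\theta(\Omega))$ with $\langle A(f),g\rangle=\overline{L(fg)}$ and $\|A\|=\|\beta\|\leq\|L\|$. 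I would then invoke the algebraic characterization \cref{alg-characterization}: for each coordinate $p_j$,
\begin{align*}
\langle A T_{p_j}(f),g\rangle = \overline{L(p_j f g)} = \overline{L(f\,p_j g)} = \langle A(f),T_{p_j}g\rangle = \langle T_{p_j}^*A(f),g\rangle,
\end{align*}
so $T_{p_j}^*A=A T_{p_j}$ for all $j$, whence $A=h_\varphi$ with $\varphi=A(1)\in H^2_\rho(\theta(\Omega))$. Finally $L$ and $L_\varphi$ agree on all polynomial products, and since these are dense (again \cref{P: Norm equality}) and both functionals are continuous, $L=L_\varphi$.

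The hard part is the surjectivity argument, and specifically the passage from an abstract bounded functional to a genuine Hankel symbol: it is exactly here that the two earlier results become indispensable, the Riesz representation converting $L$ into a conjugate-linear operator $A$, and the algebraic characterization \cref{alg-characterization} certifying that $A$ has Hankel form. A secondary technical point that I would handle with care is the interpretation of the integral formula $L_\varphi(f)=\int_{\partial\theta(\Omega)}f\bar\varphi\,d\mu_{\rho,\theta}$ on the whole weak product: it is literal on polynomial products, where $f\bar\varphi\in L^1$, and on a general $f=\sum_j f_jg_j$ it should be read through the $L^1$-convergent expansion together with the continuity just established, rather than as an a priori absolutely convergent integral.
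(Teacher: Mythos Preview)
Your argument is correct and the isometry half matches the paper's proof almost verbatim. The difference lies in surjectivity. The paper takes the most direct route: given $\chi\in(H^2_\rho\odot H^2_\rho)^*$, it observes that the embedding $f\mapsto f\cdot 1$ gives $\|f\|_\odot\le\|f\|$, so $\chi$ restricts to a bounded functional on $H^2_\rho(\theta(\Omega))$ itself; one application of Riesz yields $\varphi$ with $\chi(f)=\langle f,\varphi\rangle$, and then the estimate $|\langle h_\varphi(f),g\rangle|=|\chi(fg)|\le\|\chi\|\,\|fg\|_\odot\le\|\chi\|\,\|f\|\,\|g\|$ immediately shows $h_\varphi$ is bounded. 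You instead pass through the bilinear form $\beta(f,g)=L(fg)$, represent it by a conjugate-linear operator $A$, and invoke \cref{alg-characterization} to recognise $A$ as a Hankel operator. Both routes produce the same $\varphi$ (your $A(1)$ is the Riesz representer of $L|_{H^2_\rho}$), but the paper avoids the detour through \cref{alg-characterization} entirely; your approach has the minor advantage of making the operator-theoretic content of the duality explicit, at the cost of one extra dependency.
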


\begin{proof}
Let $\varphi$ be in $H^2_\rho (\theta(\Omega)) \cap \textup{s-Hank}(\partial \theta(\Omega))$. Consider an arbitrary $F$ in $ \bC[\boldsymbol{p}] \bullet \bC[\boldsymbol{p}]$ of the form $F=\sum_{j=1}^n f_j g_j$ where $g_j, h_j \in \bC[\boldsymbol{p}]$. 
A short calculation shows that $$\sum_{j=1}^n \langle h_\varphi (g_j), h_j\rangle_{H_\rho^2(\theta(\Omega))} = \int_{\partial \theta(\Omega)} \varphi \bar{F} d\mu_{\rho, \theta}.$$ Since $h_\varphi$ is bounded, 
$$|\int_{\partial \theta(\Omega)} \bar{\varphi} F d\mu_{\rho, \theta}|= |\int_{\partial \theta(\Omega)} \varphi \bar{F} d\mu_{\rho, \theta} | \leq \|h_\varphi \| \sum_{j=1}^n\|g_j\|_{H_\rho^2(\theta(\Omega))}\|h_j\|_{H_\rho^2(\theta(\Omega))}.$$ 
This holds true for any representation $F=\sum_{j=1}^n g_j h_j$. Using \cref{P: Norm equality}, we have 
$$|\int_{\partial \theta(\Omega)} \bar{\varphi} F d\mu_{\rho, \theta} | \leq \|h_\varphi\| \|F\|_\bullet  =\|h_\varphi\| \|F\|_\odot.$$ 
Thus, $L_\varphi$ is a bounded linear functional on the dense subspace $\bC[\boldsymbol{p}] \bullet \bC[\boldsymbol{p}] $ and hence extends continuously to $H_\rho^2(\theta(\Omega)) \odot H_\rho^2(\theta(\Omega)) $. Moreover, $\|L_\varphi\| \leq \|h_\varphi\|$.

Conversely, suppose that $\chi$ is a bounded linear functional on $H_\rho^2(\theta(\Omega)) \odot H_\rho^2(\theta(\Omega))$. For $f \in H_\rho^2(\theta(\Omega)) $, we can write $f= f\cdot 1 $  and so, $f \in H_\rho^2(\theta(\Omega)) \odot H_\rho^2(\theta(\Omega)) $ with $\|f\|_\odot \leq \|f\|$. Therefore, we have 
$$ \| \chi(f)\| \leq \|\chi\|\|f\|_\odot \leq \|\chi \| \|f\|.$$ Thus $\chi$ defines a bounded linear functional on $H_\rho^2(\theta(\Omega))$ as well and hence there exists a $\varphi \in H_\rho^2(\theta(\Omega))$ such that $\chi(f) = \langle f, \varphi \rangle$ that is, $\chi = L_\varphi$. 

If $g \in H_\rho^2(\theta(\Omega))$ then
$$|\langle h_\varphi(f), g\rangle| =|\int_{\partial \theta(\Omega)} fg \bar{\varphi} d\mu_{\rho, \theta}| \leq \|L_\varphi \| \|fg\|_\odot  \leq \| L_\varphi \| \|f\|_{H_\rho^2(\theta(\Omega))} \|g\|_{H_\rho^2(\theta(\Omega))}.$$ Therefore $h_\varphi$ is bounded and $\|h_\varphi\| \leq \|L_\varphi \|$. This completes the proof.
\end{proof}

\begin{remark} \label{R: BMO} \leavevmode
\begin{itemize}
\item[(i)]
We can think of $\varphi \in H^2_\rho (\theta(\Omega)) \cap \textup{s-Hank}(\partial \theta(\Omega)) $ as an element of the dual of the weak product space with norm $\|\varphi\|_* = \|L_\varphi\|$ via the isometry in the theorem above. We shall use this identification below.

\item[(ii)] In case of the disc, the Euclidean ball and more generally smoothly bounded pseudoconvex domains, the dual of the corresponding weak product spaces are the same as the space of analytic functions with bounded mean oscillation ($\textup{BMO}$) defined appropriately. The space of $\textup{BMO}$ functions  is one of the most interesting objects in real-variable Hardy space theory. Looking at the analogy in these concrete cases we can view the dual of the weak product space as an abstract substitute for the analytic $\textup{BMO}$ ($\textup{BMOA}$) space in our set-up of quotient domains.   
\end{itemize}
\end{remark}

 A classical theorem of Hartman characterizes the compact small Hankel operators on $H^2(\mathbb{D})$ \cite{Hartman}. It says that a Hankel operator $H$ is compact if and only if there exists a continuous function $\varphi$ on $\mathbb{T}$ such that $H$ is the same as the Hankel operator corresponding to the symbol $\varphi$. In fact, compact Hankel operators on $H^2(\mathbb{D})$ are operator-norm limit of Hankel operators of finite rank. Such a characterization of compact Hankel operators is also known for compact Hankel operators on the Hardy space of the Euclidean ball $H^2\left( \mathbb{B}_d \right)$; see \cite{CRW}. The following lemma will be useful for approximation of compact small Hankel operators in our perspective.
\begin{lemma}\label{Hart-L1}
If $\psi $ is a polynomial in $\boldsymbol{p}$ and $\bar{\boldsymbol{p}}$ and $\varphi = P_{H_\rho^2(\theta(\Omega))}(\psi)$, then $h_\varphi$ is a finite rank operator.
\end{lemma}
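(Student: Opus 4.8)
The plan is to produce a finite-dimensional subspace of $H_\rho^2(\theta(\Omega))$ off which $h_\varphi$ vanishes, by exploiting a weighted grading of the Hardy space that comes from the homogeneity of $\theta$ and $\ell_\rho$. First I would reduce to a single monomial symbol: since $\varphi \mapsto h_\varphi$ and $\psi \mapsto P_{H_\rho^2(\theta(\Omega))}(\psi)$ are both linear, and the finite-rank operators form a linear subspace of the conjugate-linear operators on $H_\rho^2(\theta(\Omega))$, it suffices to treat $\psi = \boldsymbol{p}^\gamma \overline{\boldsymbol{p}^\delta}$ for fixed multi-indices $\gamma, \delta \in \mathbb{N}^d$.

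Next I would set up the grading. Writing $m_j = \deg \theta_j$ and $m_0 = \deg \ell_\rho$, the isometry $\Gamma_\rho$ sends the monomial $\boldsymbol{p}^\alpha$ to $\tfrac{1}{\sqrt{|G|}} \ell_\rho \boldsymbol{\theta}^\alpha$, a homogeneous polynomial of degree $m_0 + \langle \alpha, \boldsymbol{m}\rangle$ in $H^2(\Omega)$, where $\langle \alpha, \boldsymbol{m}\rangle = \sum_{j=1}^d \alpha_j m_j$. Because the orthonormal basis $\{e_\alpha^{(k)}\}$ of $H^2(\Omega)$ consists of homogeneous polynomials graded by degree (and for each $k$ the $N_k$ elements span all homogeneous polynomials of degree $k$), homogeneous polynomials of distinct degrees are mutually orthogonal. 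Hence $\boldsymbol{p}^\alpha \perp \boldsymbol{p}^{\alpha'}$ in $H_\rho^2(\theta(\Omega))$ whenever $\langle \alpha, \boldsymbol{m}\rangle \neq \langle \alpha', \boldsymbol{m}\rangle$. Setting $\mathcal{P}_n = \operatorname{span}\{\boldsymbol{p}^\alpha : \langle \alpha, \boldsymbol{m}\rangle = n\}$, this gives an orthogonal decomposition $H_\rho^2(\theta(\Omega)) = \overline{\bigoplus_n \mathcal{P}_n}$ with each $\mathcal{P}_n$ finite-dimensional, since $\{\alpha : \langle \alpha, \boldsymbol{m}\rangle = n\}$ is finite (every $m_j \geq 1$).

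The heart of the argument is then a degree count. Using $\langle h_\varphi(f), g\rangle = \langle \varphi, fg\rangle$ together with $\varphi = P_{H_\rho^2(\theta(\Omega))}(\psi)$ and $fg \in H_\rho^2(\theta(\Omega))$, taking $f = \boldsymbol{p}^\alpha$ and $g = \boldsymbol{p}^\beta$ yields
$$\langle h_\varphi(\boldsymbol{p}^\alpha), \boldsymbol{p}^\beta\rangle = \langle \psi, \boldsymbol{p}^{\alpha+\beta}\rangle = \langle \boldsymbol{p}^\gamma, \boldsymbol{p}^{\alpha+\beta+\delta}\rangle,$$
which vanishes unless $\langle \gamma, \boldsymbol{m}\rangle = \langle \alpha, \boldsymbol{m}\rangle + \langle \beta, \boldsymbol{m}\rangle + \langle \delta, \boldsymbol{m}\rangle$. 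As $\langle \beta, \boldsymbol{m}\rangle, \langle \delta, \boldsymbol{m}\rangle \geq 0$, this forces $\langle \alpha, \boldsymbol{m}\rangle \leq \langle \gamma, \boldsymbol{m}\rangle$, so $h_\varphi$ annihilates $\mathcal{P}_n$ for every $n > \langle \gamma, \boldsymbol{m}\rangle$. Consequently the range of $h_\varphi$ coincides with $h_\varphi\big(\bigoplus_{n \leq \langle \gamma, \boldsymbol{m}\rangle} \mathcal{P}_n\big)$, a finite-dimensional space, and therefore $h_\varphi$ has finite rank.

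I expect the only delicate point to be the justification of the weighted grading, i.e.\ that distinct values of $\langle \alpha, \boldsymbol{m}\rangle$ yield orthogonal monomials in $H_\rho^2(\theta(\Omega))$; this I would deduce cleanly from the homogeneity of $\ell_\rho$ and the $\theta_j$ and from the homogeneous orthonormal basis of $H^2(\Omega)$ recorded in \cref{Hardy}. The remaining degree bookkeeping, and the passage from ``annihilates all high $\mathcal{P}_n$'' to ``finite rank'', are routine.
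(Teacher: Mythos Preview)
Your proof is correct and rests on the same mechanism as the paper's: the weighted grading of $H_\rho^2(\theta(\Omega))$ obtained, via $\Gamma_\rho$, from the homogeneous orthonormal basis $\{e_\alpha^{(k)}\}$ of $H^2(\Omega)$ together with the homogeneity of $\ell_\rho$ and the $\theta_j$. The only difference is a dual one in execution: the paper first asserts that $\varphi$ is a polynomial, reduces to $\varphi=\boldsymbol p^{\boldsymbol\gamma}$, and then uses the Szeg\H o kernel expansion to show that the \emph{range} of $\Gamma_\rho h_{\boldsymbol p^{\boldsymbol\gamma}}$ lands in polynomials of degree at most $m_0+\sum_j\gamma_jm_j$; you instead reduce to $\psi=\boldsymbol p^\gamma\overline{\boldsymbol p^\delta}$ and show that $h_\varphi$ \emph{annihilates} every $\mathcal P_n$ with $n>\langle\gamma,\boldsymbol m\rangle$. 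Your route has the minor advantage of bypassing the paper's unjustified preliminary claim that $P_{H_\rho^2}(\psi)$ is itself a polynomial.
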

\begin{proof}
Suppose, $\psi = \sum_{|\boldsymbol{\alpha}|\leq M_1, \ |\boldsymbol{\beta}|\leq M_2 } a_{\boldsymbol{\alpha}, \boldsymbol{\beta}}\boldsymbol{p}^{\boldsymbol{\alpha}} \bar{\boldsymbol{p}}^{\boldsymbol{\beta}} $. Then $\varphi$ will be polynomial, say, $\varphi = \sum_{|\boldsymbol{\gamma}| \leq N} a_{\boldsymbol{\gamma}} \boldsymbol{p}^{\boldsymbol{\gamma}} $. Note that,
$$h_\varphi = \sum_{|\boldsymbol{\gamma}|\leq N} a_{\boldsymbol{\gamma}} h_{\boldsymbol{p}^{\boldsymbol{\gamma}}} .$$ So, it is enough to show that $h_{\boldsymbol{p}^{\boldsymbol{\gamma}}}$ is finite rank. To that end, let us note with the unitary $\Gamma_\rho$ as in \eqref{Gamma_rho} and $g\in \bC[\boldsymbol{p}]$ that 
$$ \Gamma_\rho h_{\boldsymbol{p}^{\boldsymbol{\gamma}}} (g) = \Gamma_\rho P_{H_\rho^2(\theta(\Omega))} \left( \boldsymbol{p}^{\boldsymbol{\gamma}} \bar{g} \right) =  P_{R_\rho ^G H^2(\Omega)} \Gamma_\rho' \left( \boldsymbol{p}^{\boldsymbol{\gamma}} \bar{g} \right) = \frac{1}{\sqrt{|G|}}P_{R_\rho ^G H^2(\Omega)} \left(\ell_\rho \theta ^{\boldsymbol{\gamma}} \overline{g \circ \theta} \right) .$$
For $\boldsymbol{\gamma} =(\gamma_1, \dots, \gamma_d)$, $\theta^{\boldsymbol{\gamma}}= \theta_1^{\gamma_1}\cdots \theta_d^{\gamma_d}$ and so,  the degree of the homogeneous polynomial $\ell_\rho \theta ^{\boldsymbol{\gamma}}$ is $m_0 + \sum_{j=1}^d \gamma_j m_j$. So, for every homogeneous $G$-invariant polynomial $g \circ \theta $ the polynomial $P_{R_\rho ^G H^2(\Omega)} \left(\ell_\rho \theta ^{\boldsymbol{\gamma}} \overline{g \circ \theta} \right)$ is of degree atmost $d_0= m_0 + \sum_{j=1}^d \gamma_j m_j$. Since range of $P_{R_\rho^G H^2(\Omega)}$ is contained in the range of $P_{ H^2(\Omega)}$, it is enough to prove $P_{H^2(\Omega)} \left(\ell_\rho \theta ^{\boldsymbol{\gamma}} \overline{g \circ \theta} \right)$ is of degree atmost $d_0$.  Indeed,
\begin{align*}
&P_{ H^2(\Omega)} \left(\ell_\rho \theta ^{\boldsymbol{\gamma}} \overline{(g \circ \theta)} \right)(\boldsymbol{z})= \int_{\partial\Omega} \ell_\rho(\boldsymbol{\zeta}) \theta ^{\boldsymbol{\gamma}}(\boldsymbol{\zeta}) \overline{(g \circ \theta) (\boldsymbol{\zeta})} \overline{S_\Omega(\boldsymbol{\zeta}, \boldsymbol{z})} d\nu(\boldsymbol{\zeta})\\
&= \sum_{k=0}^\infty \sum_{\alpha=0}^{N_k} \int_{\partial \Omega} \ell_\rho(\boldsymbol{\zeta}) \theta ^{\boldsymbol{\gamma}}(\boldsymbol{\zeta})\overline{(g \circ \theta) (\boldsymbol{\zeta})} \overline{e_{\alpha}^{(k)}(\boldsymbol{\zeta})} e_{\alpha}^{(k)}(\boldsymbol{z})d\nu(\boldsymbol{\zeta})\\
&=\sum_{k=0}^\infty \sum_{\alpha=0}^{N_k} e_{\alpha}^{(k)}(\boldsymbol{z}) \langle \ell_\rho \theta ^{\boldsymbol{\gamma}}, (g \circ \theta ) e_{\alpha}^{(k)} \rangle \\
&=\sum_{k=0}^{d_0} \sum_{\alpha=0}^{N_k} e_{\alpha}^{(k)}(\boldsymbol{z}) \langle \ell_\rho \theta ^{\boldsymbol{\gamma}}, (g \circ \theta ) e_{\alpha}^{(k)} \rangle.
\end{align*}
 This is a polynomial of degree atmost $d_0$. Therefore, $\Gamma_\rho h_{\boldsymbol{p}^{\boldsymbol{\gamma}}}$ and hence $ h_{\boldsymbol{p}^{\boldsymbol{\gamma}}}$ is finite rank operator. This completes the proof.
\end{proof}

In the classical case, the analytic symbols that define bounded compact Hankel operators are functions with vanishing mean oscillations (\textup{VMOA}). Also, the holomorphic polynomials are dense in $\textup{VMOA}$ with respect to $\textup{BMO}$-norm. In our setup, as discussed in \cref{R: BMO} (ii), we think of the dual of the weak product space as an abstract generalization of \textup{BMOA}. Hence the following definition is quite natural.
\begin{definition}
Let $\operatorname{VMOA}(\partial \theta(\Omega))$ denote the closure of holomorphic polynomials in $(H_\rho^2(\theta(\Omega)) \odot H_\rho^2(\theta(\Omega)) )^*$.
\end{definition}
Now we are in a position to prove an analogue of Hartman's theorem in our case. The central idea of the proof is to approximate compact small Hankel operators by small Hankel operators corresponding to some polynomial symbols. We shall also use the properties of $\delta_r$ from \cref{Hart-L2}.

\begin{thm} \label{Hartman}
For $\varphi \in H_\rho^2(\theta(\Omega)) $, the small Hankel operator $h_\varphi$ is a compact operator if and only if $\varphi \in \operatorname{VMOA}(\partial \theta(\Omega))$.
\end{thm}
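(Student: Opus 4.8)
The plan is to prove both implications by leveraging the finite-rank approximation machinery already set up, in direct analogy with the classical Hartman theorem on the disc. The core strategy is to show that $h_\varphi$ is compact if and only if $\varphi$ can be approximated in the dual norm $\|\cdot\|_*$ (equivalently, in operator norm of the associated small Hankel operator, by \cref{dual-tensor-product}) by small Hankel operators with holomorphic polynomial symbols, which are finite rank by \cref{Hart-L1}.

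For the \emph{if} direction, suppose $\varphi \in \operatorname{VMOA}(\partial \theta(\Omega))$, so there is a sequence of holomorphic polynomials $\varphi_n$ with $\|\varphi - \varphi_n\|_* \to 0$. Since the isometry of \cref{dual-tensor-product} identifies $\|\psi\|_* = \|h_\psi\|$ for symbols in $H^2_\rho(\theta(\Omega)) \cap \textup{s-Hank}(\partial\theta(\Omega))$, this gives $\|h_\varphi - h_{\varphi_n}\| = \|h_{\varphi - \varphi_n}\| = \|\varphi - \varphi_n\|_* \to 0$. Each $h_{\varphi_n}$ is finite rank by \cref{Hart-L1} (a holomorphic polynomial is trivially a polynomial in $\boldsymbol{p}$ whose Szeg\"o projection is itself), so $h_\varphi$ is an operator-norm limit of finite-rank operators and hence compact.

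The \emph{only if} direction is where the real work lies, and I expect it to be the main obstacle. Given $h_\varphi$ compact, I want to produce polynomial approximants. The natural device is the family $\{\delta_r\}$ from \cref{Hart-L2}: I would consider the symbols $\delta_r(\varphi)$ and analyze $h_{\delta_r(\varphi)}$, aiming to show $\|h_\varphi - h_{\delta_r(\varphi)}\| \to 0$ as $r \to 1^-$. The key points to establish are (a) an intertwining-type identity relating $h_{\delta_r(\varphi)}$ to $\delta_r h_\varphi \delta_r$ (or a similar symmetric product), using that $\delta_r$ is self-adjoint (\cref{Hart-L2}(ii)) and a contraction (\cref{Hart-L2}(i)); and (b) that $\delta_r \to I$ in the strong operator topology (\cref{Hart-L2}(iii)). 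Compactness of $h_\varphi$ is precisely what upgrades this strong convergence on the two-sided product to norm convergence, since $\delta_r K \delta_r \to K$ in operator norm whenever $K$ is compact and $\delta_r \xrightarrow{\text{SOT}} I$ with $\|\delta_r\| \le 1$ (a standard compactness argument: approximate $K$ by finite-rank operators and control the tails uniformly).

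Having shown $h_{\delta_r(\varphi)} \to h_\varphi$ in operator norm, it remains to observe that $\delta_r(\varphi)$ is, for each fixed $r$, approximable in $\|\cdot\|_*$ by genuine holomorphic polynomials: indeed $\delta_r(\varphi)(\theta(\boldsymbol{z})) = r^{2m_0}\varphi\circ\theta(r\boldsymbol{z})$ is holomorphic in a neighbourhood of $\overline{\theta(\Omega)}$, so its Taylor polynomials converge to it uniformly on $\overline{\theta(\Omega)}$, and uniform convergence controls the dual norm $\|\cdot\|_*$ (the functionals $L_\psi$ are dominated by sup-norm bounds against the weak-product elements). Combining a diagonal argument over $r \to 1^-$ and the polynomial truncations then places $\varphi$ in the $\|\cdot\|_*$-closure of holomorphic polynomials, i.e. $\varphi \in \operatorname{VMOA}(\partial\theta(\Omega))$. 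The delicate step I would scrutinize most carefully is verifying the precise intertwining identity in (a), since the conjugate-linearity of $h_\varphi$ means one must track complex conjugates correctly when commuting $\delta_r$ through the small Hankel operator, and establishing that $\delta_r(\varphi)$ does define a bounded small Hankel operator (so that the isometry of \cref{dual-tensor-product} applies) before the norm estimates can be invoked.
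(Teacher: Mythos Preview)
Your proposal is correct and follows essentially the same route as the paper: the \emph{if} direction via \cref{Hart-L1} and the isometry of \cref{dual-tensor-product}, and the \emph{only if} direction via the intertwining identity (the paper proves precisely $h_{\delta_r(\varphi)} = r^{-2m_0}\delta_r h_\varphi \delta_r$) together with the compactness upgrade of SOT convergence to norm convergence. One small correction: your claim that the Taylor polynomials of $\delta_r(\varphi)$ converge uniformly on $\overline{\theta(\Omega)}$ is not justified for a general multivariable domain; the paper instead invokes the Oka--Weil theorem, using that $\overline{\theta(\Omega)}$ is polynomially convex, to secure the uniform polynomial approximation you need.
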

\begin{proof}
Let $h_\varphi$ be a compact small Hankel operator. Since $\delta_r \xlongrightarrow{r\to 1^-} \operatorname{Id}_{H_\rho^2(\theta(\Omega))}$ in strong operator topology and $h_\varphi$ is compact, $\delta_r h_\varphi \xlongrightarrow{\|\cdot\|_{op}} h_\varphi$. Similarly, self-adjointness of $\delta_r$ and compactness of $h_\varphi^*$ imply $ h_\varphi \delta_r \xlongrightarrow{\|\cdot\|_{op}} h_\varphi$. Now 
\begin{align*}
\| h_\varphi - r^{-2m_0} \delta_r h_\varphi \delta_r \| & \leq \| h_\varphi - r^{-2m_0} \delta_r h_\varphi \| + \| r^{-2m_0}\delta_r h_\varphi - r^{-2m_0} \delta_r h_\varphi \delta_r\| \rightarrow 0                                                                                        
\end{align*}
as $r\rightarrow 1^-$ with $r>0$.
So, $h_\varphi$ can be approximated by $r^{-2m_0} \delta_r h_\varphi \delta_r $. Now our aim is to approximate $r^{-2m_0} \delta_r h_\varphi \delta_r $ by finite rank operators. For any two holomorphic polynomials $f$ and $g$ we  have,
$$ \left \langle h_{\delta_r(\varphi)}(f), g  \right \rangle = \int_{\partial \theta(\Omega)} r^{2m_0} \varphi(r^{m_1}p_1,\dots, r^{m_d}p_d ) \overline{f g} d\mu_{\rho,\theta} = \left \langle \delta_r(\varphi), f g  \right \rangle =\left \langle  \varphi, \delta_r (f g) \right \rangle $$
and
\begin{align*}
&\left \langle r^{-2m_0} \delta_r h_\varphi \delta_r (f), g \right \rangle = \frac{1}{r^{2m_0}} \left \langle h_\varphi(\delta_r f),\delta_r(g) \right \rangle \\
& = \int_{\partial \theta(\Omega)} \frac{r^{2m_0} r^{2m_0}}{r^{2m_0}} \varphi(p_1,\dots, p_d)  \overline{(f g)}(r^{m_1}p_1,\dots, r^{m_d}p_d ) d\mu_{\rho,\theta} = \langle \varphi, \delta_r(f g) \rangle.
\end{align*}
Therefore, $h_{\delta_r(\varphi)} = r^{-2m_0}\delta_r h_\varphi \delta_r $. From \cref{dual-tensor-product}, we have $\|L_{\varphi_r - \varphi}\| = \|h_{\varphi_r - \varphi}\|$. Also, $\delta_r(\varphi)$ being holomorphic in a neighbourhood of $\overline{\theta(\Omega)}$, we can approximate $\delta_r(\varphi)$ by a sequence of holomorphic polynomials in $\overline{\theta(\Omega)}$ uniformly. Here we are using the fact that $\overline{\theta(\Omega)}$ is polynomially convex and therefore the Oka-Weil theorem ensures that such a polynomial approximation can be applied.

Choose $\epsilon >0$. Then there exists $r \in (0,1)$ such that $\|h_{\varphi -\delta_r(\varphi)}\| < \epsilon$. For this $\delta_r(\varphi)$, choose a holomorphic polynomial $f$ so that $\|\delta_r(\varphi) - f\|_{\infty, \overline{\theta(\Omega)}} < \epsilon$. But $\|h_{\delta_r(\varphi) - f} \| \leq  \|\delta_r(\varphi) -f\|_{\infty, \overline{\theta(\Omega)}} <\epsilon$. So, $\|h_\varphi - h_f \| < 2\epsilon$. Finally, using the norm equivalence of the Hankel operator and the dual norm of the corresponding symbol we get $\|\varphi - f \|_* < C\epsilon$, for some absolute constant $C$. Therefore $\varphi \in \operatorname{VMOA}(\partial\theta(\Omega))$. 

To prove the converse, assume $\varphi \in \operatorname{VMOA}(\partial\theta(\Omega))$. Let $\epsilon >0$. Then there exists a holomorphic polynomial $\psi$ so that $\|\varphi - \psi \|_* < \epsilon $. Again by the norm equivalence as earlier, $\|h_{\varphi - \psi} \| < C\epsilon$ for some absolute constant $C$. Now, \cref{Hart-L1} implies $h_\psi $ is a finite rank operator. So, $h_\varphi$ can be approximated by finite rank operators in operator norm and hence it is compact. This completes the proof.
\end{proof}


\section{The big Hankel operator} \label{Big-Hankel}
\begin{definition}
For $\varphi \in H_\rho^2(\theta(\Omega))^{\perp}$, the \textit{big Hankel operator} $H_\varphi$ is a densely defined linear operator from $H_\rho^2(\theta(\Omega)) $ to $H_\rho^2(\theta(\Omega))^{\perp}$ given by $$ H_\varphi(f) =(I- P_{H_\rho^2(\theta(\Omega))})(\varphi f), \text{ for all holomorphic polynomial } f \in H_\rho^2(\theta(\Omega)),$$ where $P_{H_\rho^2(\theta(\Omega))}$ is the orthogonal projection of $L^2(\partial \theta(\Omega),\mu_{\rho, \theta})$ onto $H_\rho^2(\theta(\Omega))$.
\end{definition}
The big Hankel operators are well studied in $H^2\left( \mathbb{D}^d\right)$; see for example \cite{Fe-Sa, Russo}. Let $S_{\rho, \theta}$ be the reproducing kernel (or, the {\em Szeg\"o kernel}) for $H_\rho^2(\theta(\Omega))$. Then the {\em Poisson-Szeg\"o kernel} is given by
$$
\mathcal{P}_\rho (\boldsymbol{p},\boldsymbol{\zeta}) = \frac{|S_{\rho, \theta}(\boldsymbol{p}, \boldsymbol{\zeta})|^2}{S_{\rho, \theta}(\boldsymbol{p}, \boldsymbol{p})} \text{ for } (\boldsymbol{p},\boldsymbol{\zeta})\in \theta(\Omega) \times \partial \theta(\Omega).
$$ 
Note that one can write $\mathcal{P}_\rho $ in terms of the normalized kernel functions as follows. 
\noindent Let $s_{\boldsymbol{p}} (\boldsymbol{w})= S_{\rho, \theta} (\boldsymbol{w}, \boldsymbol{p})$ and $\hat{s}_{\boldsymbol{p}} = \frac{s_{\boldsymbol{p}}}{\|s_{\boldsymbol{p}} \|}$. Then $\mathcal{P}_\rho (\boldsymbol{p}, \boldsymbol{\zeta}) = \left| \hat{s}_{\boldsymbol{p}}(\boldsymbol{\zeta}) \right|^2 $.
For a function $f \in L^2(\partial \theta(\Omega), \mu_{\rho, \theta})$, we define the Poisson-Szeg\"o extension $\tilde{f}$ of $f$ by
$$
\tilde{f}(\boldsymbol{p}) = \int_{\partial \theta(\Omega)} f(\boldsymbol{\zeta}) \mathcal{P}_\rho (\boldsymbol{p},\boldsymbol{\zeta}) d\mu_{\rho, \theta}(\boldsymbol{\zeta})
$$
for $\boldsymbol{p}\in \theta(\Omega)$. Motivated by Ahern and Youssfi in \cite{Ah-You}, we make the following definition. 

\begin{definition}
$\operatorname{BMO}(\partial \theta(\Omega))$ is defined  to be the collection of functions $f$ in $L^2_\rho(\partial \theta(\Omega))$ such that 
\begin{align}
\|f\|_{*} = \sup_{\boldsymbol{p} \in \theta(\Omega)} \int_{\partial \theta(\Omega)} |f(\boldsymbol{\zeta})-\tilde{f}(\boldsymbol{p})|^2 \mathcal{P}_\rho (\boldsymbol{p},\boldsymbol{\zeta} ) d\mu_{\rho, \theta}(\boldsymbol{\zeta})<\infty.
\end{align}
\end{definition}
A word of caution: in the case of the polydisc, this is different from the Chang-Fefferman BMO \cite{Chang-Fefferman, Cotlar-Sadosky-IEOT}. 
 We give a necessary condition for simultaneous boundedness of $H_f$ and $H_{\bar{f}}$ for a symbol $f$ in terms of the BMO defined above. The motivation is from Theorem B of \cite{Ah-You}. The idea of the proof is inspired from \cite{Beatrous-Li, Zhu-book}.
 
\begin{thm} \label{fInBMO}
Let $f \in L^2(\partial \theta(\Omega), \mu_{\rho, \theta})$. Assume that $\frac{1}{S_{\rho, \theta}(\cdot, \boldsymbol{p})}$ in $\cA(\theta(\Omega))$ for each $\boldsymbol{p}$ in $\theta(\Omega)$. If the big Hankel operators $H_f$ and $H_{\bar{f}}$ are bounded operators, then $f \in \operatorname{BMO}(\partial \theta(\Omega))$.
\end{thm}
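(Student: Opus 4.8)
The plan is to estimate the mean oscillation uniformly over $\boldsymbol p\in\theta(\Omega)$. Writing $\langle\cdot,\cdot\rangle$ for the inner product of $L^2(\partial\theta(\Omega),\mu_{\rho,\theta})$, $P$ for the Szeg\"o projection onto $H_\rho^2(\theta(\Omega))$, and $c:=\tilde f(\boldsymbol p)$, and using $\mathcal P_\rho(\boldsymbol p,\cdot)=|\hat s_{\boldsymbol p}|^2$, I would first record that
$$\|f\|_*=\sup_{\boldsymbol p}B_f(\boldsymbol p),\qquad B_f(\boldsymbol p):=\int_{\partial\theta(\Omega)}|f-\tilde f(\boldsymbol p)|^2\mathcal P_\rho(\boldsymbol p,\boldsymbol\zeta)\,d\mu_{\rho,\theta}=\|(f-c)\hat s_{\boldsymbol p}\|^2.$$
Since $\hat s_{\boldsymbol p}\in H_\rho^2(\theta(\Omega))$ we have $(I-P)(c\,\hat s_{\boldsymbol p})=0$, so $(f-c)\hat s_{\boldsymbol p}=\Phi+H_f\hat s_{\boldsymbol p}$ is an orthogonal splitting with $\Phi:=P((f-c)\hat s_{\boldsymbol p})\in H_\rho^2(\theta(\Omega))$ and $H_f\hat s_{\boldsymbol p}=(I-P)(f\hat s_{\boldsymbol p})$. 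This yields $B_f(\boldsymbol p)=\|H_f\hat s_{\boldsymbol p}\|^2+\|\Phi\|^2$, whose first summand is at most $\|H_f\|^2$. The entire problem is thereby reduced to bounding $\|\Phi\|$ uniformly, and this is where $H_{\bar f}$ and the regularity hypothesis must enter.

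The regularity condition is used precisely to factor $\Phi$. Because $1/S_{\rho,\theta}(\cdot,\boldsymbol p)\in\cA(\theta(\Omega))$, the reciprocal $1/\hat s_{\boldsymbol p}=\|s_{\boldsymbol p}\|/S_{\rho,\theta}(\cdot,\boldsymbol p)$ is a bounded holomorphic multiplier, so I may write $\Phi=\psi\,\hat s_{\boldsymbol p}$ with $\psi:=\Phi/\hat s_{\boldsymbol p}\in H_\rho^2(\theta(\Omega))$. Moreover $\langle\Phi,\hat s_{\boldsymbol p}\rangle=\langle(f-c)\hat s_{\boldsymbol p},\hat s_{\boldsymbol p}\rangle=\tilde f(\boldsymbol p)-c=0$, which forces $\Phi(\boldsymbol p)=\langle\Phi,s_{\boldsymbol p}\rangle=0$; since $\Phi(\boldsymbol p)=\psi(\boldsymbol p)\hat s_{\boldsymbol p}(\boldsymbol p)=\psi(\boldsymbol p)\|s_{\boldsymbol p}\|$, this gives the crucial vanishing $\psi(\boldsymbol p)=0$.

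With the factorization in hand, I would expand
$$\|\Phi\|^2=\langle\Phi,(f-c)\hat s_{\boldsymbol p}\rangle=\int_{\partial\theta(\Omega)}\psi\,(\bar f-\bar c)\,|\hat s_{\boldsymbol p}|^2\,d\mu_{\rho,\theta},$$
and substitute the orthogonal splitting $(\bar f-\bar c)\hat s_{\boldsymbol p}=\Psi+H_{\bar f}\hat s_{\boldsymbol p}$, where $\Psi:=P((\bar f-\bar c)\hat s_{\boldsymbol p})\in H_\rho^2(\theta(\Omega))$. The analytic cross term $\int\psi\Psi\,\overline{\hat s_{\boldsymbol p}}\,d\mu_{\rho,\theta}=\langle\psi\Psi,\hat s_{\boldsymbol p}\rangle$ equals $(\psi\Psi)(\boldsymbol p)/\|s_{\boldsymbol p}\|=0$ because $\psi(\boldsymbol p)=0$, and the remaining term is $\langle H_{\bar f}\hat s_{\boldsymbol p},\bar\psi\,\hat s_{\boldsymbol p}\rangle$. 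Since $|\bar\psi\,\hat s_{\boldsymbol p}|=|\Phi|$ pointwise, Cauchy--Schwarz gives $\|\Phi\|^2\le\|H_{\bar f}\hat s_{\boldsymbol p}\|\,\|\Phi\|\le\|H_{\bar f}\|\,\|\Phi\|$, hence $\|\Phi\|\le\|H_{\bar f}\|$. Combined with the first paragraph this produces $\|f\|_*\le\|H_f\|^2+\|H_{\bar f}\|^2<\infty$, so $f\in\operatorname{BMO}(\partial\theta(\Omega))$; note that both Hankel operators are genuinely used, one for each summand of $B_f$.

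I expect the main obstacle to be the justification that the analytic cross term vanishes: $\psi\Psi$ is a priori only an $H^1$-type product of two members of $H_\rho^2(\theta(\Omega))$, so I must verify that point evaluation at $\boldsymbol p$ is still represented by pairing against $s_{\boldsymbol p}$ on such products, i.e.\ that $\langle\psi\Psi,\hat s_{\boldsymbol p}\rangle=(\psi\Psi)(\boldsymbol p)/\|s_{\boldsymbol p}\|$. This is exactly the setting in which the continuity of point evaluations on the weak product space $H_\rho^2(\theta(\Omega))\odot H_\rho^2(\theta(\Omega))$ applies, the kernel $s_{\boldsymbol p}$ being bounded; alternatively one approximates $\psi$ by bounded holomorphic functions and passes to the limit. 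Once this technical point is secured, every remaining step is a routine bounded-operator estimate, and the hypothesis on $1/S_{\rho,\theta}(\cdot,\boldsymbol p)$ is invoked only to supply the factorization $\Phi=\psi\hat s_{\boldsymbol p}$ together with $\psi(\boldsymbol p)=0$.
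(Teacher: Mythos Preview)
Your argument is correct and takes a genuinely different route from the paper's. The paper proceeds via a single algebraic identity: starting from $B_f(\boldsymbol p)=\|f\hat s_{\boldsymbol p}\|^2-|\tilde f(\boldsymbol p)|^2$, it computes
\[
\|H_f(\hat s_{\boldsymbol p})\|^2+\|H_{\bar f}(\hat s_{\boldsymbol p})\|^2-B_f(\boldsymbol p)
=\Big\langle H_f(\hat s_{\boldsymbol p}),\ \overline{H_{\bar f}(\hat s_{\boldsymbol p})}\,\tfrac{s_{\boldsymbol p}}{\overline{s_{\boldsymbol p}}}\Big\rangle,
\]
where the regularity hypothesis enters only to make the unimodular weight $s_{\boldsymbol p}/\overline{s_{\boldsymbol p}}$ well defined on the boundary; Cauchy--Schwarz then yields $B_f(\boldsymbol p)\le 2(\|H_f\|^2+\|H_{\bar f}\|^2)$. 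Your approach instead performs the orthogonal splitting $(f-c)\hat s_{\boldsymbol p}=\Phi+H_f\hat s_{\boldsymbol p}$, uses the hypothesis to \emph{factor} $\Phi=\psi\,\hat s_{\boldsymbol p}$ with $\psi(\boldsymbol p)=0$, and then bounds $\|\Phi\|$ via $H_{\bar f}$ by killing the analytic cross term through this vanishing. Your method gives the sharper constant $B_f(\boldsymbol p)\le\|H_f\|^2+\|H_{\bar f}\|^2$ and makes the separate roles of the two Hankel operators more transparent; the paper's identity, on the other hand, is symmetric in $f$ and $\bar f$ from the outset and avoids any appeal to products of $H^2$ functions. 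The technical point you flag about $\langle\psi\Psi,\hat s_{\boldsymbol p}\rangle=(\psi\Psi)(\boldsymbol p)/\|s_{\boldsymbol p}\|$ is easily handled here: since $\hat s_{\boldsymbol p}$ is bounded, $\psi\,\overline{\hat s_{\boldsymbol p}}\in L^2$, so approximating $\Psi$ by polynomials $\Psi_n$ in $H_\rho^2$ gives $\langle\psi\Psi_n,\hat s_{\boldsymbol p}\rangle=\psi(\boldsymbol p)\Psi_n(\boldsymbol p)/\|s_{\boldsymbol p}\|=0$ and $\int\psi(\Psi_n-\Psi)\overline{\hat s_{\boldsymbol p}}\to 0$.
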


\begin{proof}
Let $\boldsymbol{p} \in \theta(\Omega)$. Then
\begin{align*}
&\int_{\partial \theta(\Omega)} |f(\boldsymbol{\zeta})-\tilde{f}(\boldsymbol{p})|^2 \mathcal{P}_\rho (\boldsymbol{p},\boldsymbol{\zeta} ) d\mu_{\rho, \theta}(\boldsymbol{\zeta}) \\
&= \int_{\partial \theta(\Omega)} \left[ |f(\boldsymbol{\zeta})|^2 + |\tilde{f}(\boldsymbol{p})|^2  - 2\operatorname{Re} \left(f(\boldsymbol{\zeta}) \overline{\tilde{f}(\boldsymbol{p})}\right)  \right] \mathcal{P}_\rho (\boldsymbol{p},\boldsymbol{\zeta} )  d\mu_{\rho, \theta}(\boldsymbol{\zeta}) \\
&= \int_{\partial \theta(\Omega)} \left| f(\boldsymbol{\zeta}) \hat{s}_{\boldsymbol{p}}(\boldsymbol{\zeta}) \right|^2 d\mu_{\rho, \theta}(\boldsymbol{\zeta}) + |\tilde{f}(\boldsymbol{p})|^2 - 2 \overline{\tilde{f}(\boldsymbol{p})} \tilde{f}(\boldsymbol{p}) \\
&= \|f \hat{s}_{\boldsymbol{p}} \|^2 - |\tilde{f}(\boldsymbol{p})|^2.
\end{align*}
Again, $\tilde{f}(\boldsymbol{p})$ can be written as 
\begin{align*}
\tilde{f}(\boldsymbol{p}) = \int_{\partial \theta(\Omega)} f(\boldsymbol{\zeta}) \hat{s}_{\boldsymbol{p}}(\boldsymbol{\zeta}) \frac{S(\boldsymbol{p} ,\boldsymbol{\zeta})}{\| s_{\boldsymbol{p}}\|} d\mu_{\rho, \theta}(\boldsymbol{\zeta}) 
                          = \frac{1}{\| s_{\boldsymbol{p}}\|} \langle f \hat{s}_{\boldsymbol{p}}, s_{\boldsymbol{p}} \rangle 
                          = \frac{1}{\| s_{\boldsymbol{p}}\|}   P_{H_\rho^2(\theta(\Omega))} (f \hat{s}_{\boldsymbol{p}} )(\boldsymbol{p}).
\end{align*}
Similarly,
 \begin{align*}
\overline{\tilde{f}(\boldsymbol{p})} = \frac{1}{\| s_{\boldsymbol{p}}\|}   P_{H_\rho^2(\theta(\Omega))} (\bar{f} \hat{s}_{\boldsymbol{p}} )(\boldsymbol{p}).
\end{align*}
Therefore,
\begin{align*}
|\tilde{f}(\boldsymbol{p})|^2 = \frac{1}{\| s_{\boldsymbol{p}}\|^2} P_{H_\rho^2(\theta(\Omega))} (f \hat{s}_{\boldsymbol{p}} )(\boldsymbol{p})    P_{H_\rho^2(\theta(\Omega))} (\bar{f} \hat{s}_{\boldsymbol{p}} ) (\boldsymbol{p}).
\end{align*}
Further,

\noindent $\|f  \hat{s}_{\boldsymbol{p}} \|^2 = \|P_{H_\rho^2(\theta(\Omega))} (f \hat{s}_{\boldsymbol{p}} )\|^2 + \|H_f(\hat{s}_{\boldsymbol{p}})\|^2$ and 
 $\|\bar{f}  \hat{s}_{\boldsymbol{p}} \|^2 = \|P_{H_\rho^2(\theta(\Omega))} (\bar{f} \hat{s}_{\boldsymbol{p}} )\|^2 + \|H_{\bar{f}}(\hat{s}_{\boldsymbol{p}})\|^2$. Using all these expressions we have,
 \begin{align*}
 & \|H_f(\hat{s}_{\boldsymbol{p}})\|^2 + \|H_{\bar{f}}(\hat{s}_{\boldsymbol{p}})\|^2 - \|f \hat{s}_{\boldsymbol{p}} \|^2 + |\tilde{f}(\boldsymbol{p})|^2 \\
 &= \|\bar{f}  \hat{s}_{\boldsymbol{p}} \|^2 - \|P_{H_\rho^2(\theta(\Omega))} (f \hat{s}_{\boldsymbol{p}} )\|^2 - \|P_{H_\rho^2(\theta(\Omega))} (\bar{f} \hat{s}_{\boldsymbol{p}} )\|^2 + |\tilde{f}(\boldsymbol{p})|^2  \\
 &= \langle \bar{f} \hat{s}_{\boldsymbol{p}} , \bar{f} \hat{s}_{\boldsymbol{p}}  \rangle - \langle P_{H_\rho^2(\theta(\Omega))} (f \hat{s}_{\boldsymbol{p}} ),  f \hat{s}_{\boldsymbol{p}}  \rangle - \langle P_{H_\rho^2(\theta(\Omega))} (\bar{f} \hat{s}_{\boldsymbol{p}} ),  \bar{f} \hat{s}_{\boldsymbol{p}} \rangle + |\tilde{f}(\boldsymbol{p})|^2 \\
 &= \int_{\partial \theta(\Omega)} \left[ |f|^2 \hat{s}_{\boldsymbol{p}}^2 - \bar{f} \hat{s}_{\boldsymbol{p}} P_{H_\rho^2(\theta(\Omega))} (f \hat{s}_{\boldsymbol{p}} ) - f \hat{s}_{\boldsymbol{p}} P_{H_\rho^2(\theta(\Omega))} (\bar{f} \hat{s}_{\boldsymbol{p}} ) \right](\boldsymbol{\zeta}) \frac{\overline{s_{\boldsymbol{p}}}(\boldsymbol{\zeta})}{s_{\boldsymbol{p}}(\boldsymbol{\zeta})}  d\mu_{\rho,\theta}(\boldsymbol{\zeta})\\
 & \ + \int_{\partial \theta(\Omega)}    P_{H_\rho^2(\theta(\Omega))} (f \hat{s}_{\boldsymbol{p}} )(\boldsymbol{\zeta}) P_{H_\rho^2(\theta(\Omega))} (\bar{f} \hat{s}_{\boldsymbol{p}} )(\boldsymbol{\zeta})      \frac{\overline{s_{\boldsymbol{p}}}(\boldsymbol{\zeta})}{s_{\boldsymbol{p}}(\boldsymbol{\zeta})}  d\mu_{\rho,\theta}(\boldsymbol{\zeta}) \\
 & = \int_{\partial \theta(\Omega)} (I-P_{H_\rho^2(\theta(\Omega))})(f \hat{s}_{\boldsymbol{p}})(\boldsymbol{\zeta}) (I-P_{H_\rho^2(\theta(\Omega))})(\bar{f} \hat{s}_{\boldsymbol{p}}) (\boldsymbol{\zeta}) \frac{\overline{s_{\boldsymbol{p}}}(\boldsymbol{\zeta})}{s_{\boldsymbol{p}}(\boldsymbol{\zeta})}  d\mu_{\rho,\theta}(\boldsymbol{\zeta}) \\
 &= \left \langle H_f(\hat{s}_{\boldsymbol{p}}), \overline{H_{\bar{f}} (\hat{s}_{\boldsymbol{p}})} \frac{s_{\boldsymbol{p}}}{\overline{s_{\boldsymbol{p}}}} \right \rangle.
 \end{align*}
 Therefore,
 \begin{align*}
 \|f \hat{s}_{\boldsymbol{p}} \|^2 - |\tilde{f}(\boldsymbol{p})|^2 &= \|H_f(\hat{s}_{\boldsymbol{p}})\|^2 + \|H_{\bar{f}}(\hat{s}_{\boldsymbol{p}})\|^2 - \left \langle H_f(\hat{s}_{\boldsymbol{p}}), \overline{H_{\bar{f}} (\hat{s}_{\boldsymbol{p}})} \frac{s_{\boldsymbol{p}}}{\overline{s_{\boldsymbol{p}}}} \right \rangle \\
 & \leq  \|H_f(\hat{s}_{\boldsymbol{p}})\|^2 + \|H_{\bar{f}}(\hat{s}_{\boldsymbol{p}})\|^2 + \|H_f(\hat{s}_{\boldsymbol{p}})\|  \|H_{\bar{f}}(\hat{s}_{\boldsymbol{p}})\| \\
 &\leq 2 (\|H_f\|^2 + \| H_{\bar{f}}\|^2 ).
 \end{align*}
 This estimate shows that $f \in BMO(\partial \theta(\Omega))$.
\end{proof}

 One would like to have a more geometric definition of the $\operatorname{BMO}(\partial \theta(\Omega))$ space so that one can obtain more information about the symbol $f$ for which $H_f$ and $H_{\bar{f}}$ are bounded. No such criterion is known yet.
\begin{corollary}
Suppose $S_{\rho, \theta}(\boldsymbol{p}, \boldsymbol{p}) \rightarrow \infty $ as $\theta(\Omega) \ni \boldsymbol{p} \rightarrow \partial \theta(\Omega)$. If $H_f$ and $H_{\bar{f}}$ are compact operators then 
\begin{align*}
\int_{\partial \theta(\Omega)} |f(\boldsymbol{\zeta})-\tilde{f}(\boldsymbol{p})|^2 \mathcal{P}_\rho (\boldsymbol{p}, \boldsymbol{\zeta}) d\mu_{\rho, \theta}(\boldsymbol{\zeta}) \rightarrow 0 \ \text{ as } \boldsymbol{p} \rightarrow \partial \theta(\Omega).
\end{align*}
\end{corollary}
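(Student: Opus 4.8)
The plan is to deduce the statement directly from the identity already established inside the proof of \cref{fInBMO}. There it was shown that for every $\boldsymbol{p}\in\theta(\Omega)$ the quantity in question factors as
\begin{align*}
\int_{\partial \theta(\Omega)} |f(\boldsymbol{\zeta})-\tilde{f}(\boldsymbol{p})|^2 \mathcal{P}_\rho (\boldsymbol{p}, \boldsymbol{\zeta}) d\mu_{\rho, \theta}(\boldsymbol{\zeta}) &= \|f \hat{s}_{\boldsymbol{p}} \|^2 - |\tilde{f}(\boldsymbol{p})|^2 \\
&= \|H_f(\hat{s}_{\boldsymbol{p}})\|^2 + \|H_{\bar{f}}(\hat{s}_{\boldsymbol{p}})\|^2 - \left \langle H_f(\hat{s}_{\boldsymbol{p}}), \overline{H_{\bar{f}} (\hat{s}_{\boldsymbol{p}})} \tfrac{s_{\boldsymbol{p}}}{\overline{s_{\boldsymbol{p}}}} \right \rangle.
\end{align*}
Since $|s_{\boldsymbol{p}}/\overline{s_{\boldsymbol{p}}}|\equiv 1$ on $\partial\theta(\Omega)$, the Cauchy--Schwarz inequality bounds the last inner product by $\|H_f(\hat{s}_{\boldsymbol{p}})\|\,\|H_{\bar{f}}(\hat{s}_{\boldsymbol{p}})\|$, so the whole expression is dominated by $\|H_f(\hat{s}_{\boldsymbol{p}})\|^2 + \|H_{\bar{f}}(\hat{s}_{\boldsymbol{p}})\|^2 + \|H_f(\hat{s}_{\boldsymbol{p}})\|\,\|H_{\bar{f}}(\hat{s}_{\boldsymbol{p}})\|$. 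Thus it suffices to prove that $\|H_f(\hat{s}_{\boldsymbol{p}})\|\to 0$ and $\|H_{\bar{f}}(\hat{s}_{\boldsymbol{p}})\|\to 0$ as $\boldsymbol{p}\to\partial\theta(\Omega)$.

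The crucial step is to show that the normalized Szeg\"o kernels $\hat{s}_{\boldsymbol{p}}$ tend weakly to $0$ in $H^2_\rho(\theta(\Omega))$ as $\boldsymbol{p}\to\partial\theta(\Omega)$. Each $\hat{s}_{\boldsymbol{p}}$ is a unit vector, so by uniform boundedness it is enough to test against a total set. For a holomorphic polynomial $g\in\bC[\boldsymbol{p}]$ the reproducing property gives $\langle g,\hat{s}_{\boldsymbol{p}}\rangle = g(\boldsymbol{p})/\|s_{\boldsymbol{p}}\| = g(\boldsymbol{p})/\sqrt{S_{\rho,\theta}(\boldsymbol{p},\boldsymbol{p})}$. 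As $\overline{\theta(\Omega)}$ is compact, $g(\boldsymbol{p})$ stays bounded while $\sqrt{S_{\rho,\theta}(\boldsymbol{p},\boldsymbol{p})}\to\infty$ by the standing hypothesis; hence $\langle g,\hat{s}_{\boldsymbol{p}}\rangle\to 0$. Since $\bC[\boldsymbol{p}]$ is dense in $H^2_\rho(\theta(\Omega))$ and $\|\hat{s}_{\boldsymbol{p}}\|=1$, a routine $3\epsilon$-argument upgrades this to $\langle h,\hat{s}_{\boldsymbol{p}}\rangle\to 0$ for every $h$, i.e.\ $\hat{s}_{\boldsymbol{p}}\rightharpoonup 0$ weakly.

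Finally I would invoke compactness: a compact operator carries weakly null sequences to norm null sequences, so applying this to $H_f$ and $H_{\bar f}$ along any sequence $\boldsymbol{p}_n\to\partial\theta(\Omega)$ yields $\|H_f(\hat{s}_{\boldsymbol{p}_n})\|\to 0$ and $\|H_{\bar f}(\hat{s}_{\boldsymbol{p}_n})\|\to 0$; combined with the estimate above this drives the integral to $0$ along every such sequence, which is precisely the claimed boundary limit. The main obstacle is making the weak-convergence step airtight: converting the net/filter limit $\boldsymbol{p}\to\partial\theta(\Omega)$ into statements about sequences, verifying that the uniform boundedness of $\{g(\boldsymbol{p})\}$ over $\overline{\theta(\Omega)}$ genuinely transfers to a dense subspace, and confirming that $S_{\rho,\theta}(\boldsymbol{p},\boldsymbol{p})\to\infty$ is used with the correct uniformity. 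The remaining ingredients are only Cauchy--Schwarz and the standard weak-convergence/compactness dichotomy.
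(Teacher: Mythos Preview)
Your proposal is correct and follows essentially the same route as the paper: both arguments quote the identity and estimate from the proof of \cref{fInBMO}, show that the normalized kernels $\hat{s}_{\boldsymbol{p}}$ tend weakly to $0$ via the reproducing property together with $S_{\rho,\theta}(\boldsymbol{p},\boldsymbol{p})\to\infty$, and then invoke compactness of $H_f$ and $H_{\bar f}$. If anything, your treatment of the weak convergence (testing first on the dense set of polynomials, where boundedness of $g(\boldsymbol{p})$ is clear, and then upgrading) is more explicit than the paper's, which simply asserts $\langle g,\hat{s}_{\boldsymbol{p}}\rangle\to 0$ for arbitrary $g\in H^2_\rho(\theta(\Omega))$.
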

\begin{proof}
  Also, for any $g \in H_\rho^2(\theta(\Omega))$,
\begin{align*}
\langle g, \hat{s}_{\boldsymbol{p}} \rangle = \frac{g(\boldsymbol{p})}{\sqrt{S_{\rho, \theta}(\boldsymbol{p}, \boldsymbol{p})}} \rightarrow 0
\end{align*}
as $\boldsymbol{p} \rightarrow \partial \theta(\Omega)$. So, $\hat{s}_{\boldsymbol{p}} \rightarrow 0$ weakly, as $\boldsymbol{p} \rightarrow \partial \theta(\Omega)$. Hence, by compactness $\|H_f(\hat{s}_{\boldsymbol{p}})\|$ and $ \|H_{\bar{f}}(\hat{s}_{\boldsymbol{p}})\|$ both tend to zero as $\boldsymbol{p} \rightarrow \partial \theta(\Omega)$.  The estimate at the end of the theorem above concludes the proof.
\end{proof}
The assumption about the singularity of $S_{\rho, \theta}(\cdot, \cdot)$ on the boundary diagonal is not very restrictive. It is known that the Szeg\"o kernels of the standard Hardy spaces of smoothly bounded strongly pseudoconvex domains have such singularities \cite{Hirachi-Annals}. The Szeg\"o kernels of $\bD^d$ and the quotient domain $\bD^d/ S_d$ have this property as can be seen from the explicit expressions of the Szeg\"o kernel.

With the help of the following result from \cite{Ba-Ti}, we now investigate whether Nehari's theorem holds for big Hankel operators on the quotient domains. The failure in the case of $H^2\left(\mathbb{T}^2\right)$ is so spectacular that it has intrigued many, and consequently different proofs  emerged in \cite{Ah-You, Fe-Sa}. 
\begin{thm} \label{Bakonyi-Timotin}
The big Hankel operator $H_\varphi $ on $H^2(\bD^2)$ corresponding to the symbol 
$$\varphi (z_1, z_2)= \sum_{n=1}^\infty \frac{1}{n} z_1^n \bar{z}_2^n $$
is a bounded operator. However, there does not exist a holomorphic function $g$ on $\bD^2$ such that $\varphi + g \in L^\infty (\bT^2)$.
\end{thm}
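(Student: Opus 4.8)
The plan is to treat the two assertions separately. For the boundedness of $H_\varphi$ I would exploit that the symbol is supported on the ``anti-diagonal'' frequencies $(n,-n)$, so that $H_\varphi$ respects the grading of $H^2(\bD^2)$ by total degree. Writing $f=\sum_{j,k\ge0}a_{jk}z_1^jz_2^k$, one computes
$$
H_\varphi(z_1^j z_2^k)=\sum_{n>k}\frac1n\, z_1^{j+n}z_2^{k-n},
$$
and every output monomial has the same total degree $j+k$ as the input. Hence $H_\varphi=\bigoplus_{s\ge0}H_\varphi^{(s)}$, an orthogonal direct sum over the degree-$s$ subspaces, so that $\|H_\varphi\|=\sup_s\|H_\varphi^{(s)}\|$.

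The next step is to identify each block. On the degree-$s$ subspace, indexing the input basis by $p=s-j\in\{0,\dots,s\}$ and the output basis by the order $m\ge1$ of the pole in $z_2$, the matrix entries of $H_\varphi^{(s)}$ are exactly $1/(p+m)$. Setting $i=m-1$ these become $1/(i+p+1)$; that is, $H_\varphi^{(s)}$ is the restriction of the classical Hilbert matrix to its first $s+1$ columns. Since the Hilbert matrix is bounded on $\ell^2$ with norm $\pi$, every block has norm at most $\pi$, and therefore $\|H_\varphi\|\le\pi<\infty$.

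For the second assertion I would argue by contradiction using a measure-preserving change of variables that trivializes the symbol. Suppose $g$ is holomorphic on $\bD^2$ with $\varphi+g\in L^\infty(\bT^2)$ of norm $M$. Introduce the torus coordinates $w=z_1\bar z_2$, $t=z_2$ (so $z_1=wt$), which give a measure-preserving automorphism of $\bT^2$. In these coordinates $\varphi=\sum_{n\ge1}w^n/n=-\log(1-w)$ depends on $w$ alone, while a holomorphic monomial $z_1^az_2^b$ becomes $w^a t^{a+b}$ with $a,b\ge0$. Averaging over the $t$-circle annihilates every term of $g$ except the one with $a+b=0$, leaving only the constant $g(0,0)$; on the other hand $\int_{\bT}(\varphi+g)(w,t)\,dt$ has modulus at most $M$ for almost every $w$. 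Combining these yields $|-\log(1-w)+g(0,0)|\le M$ a.e., which is impossible since $-\log(1-w)$ is unbounded near $w=1$.

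The main obstacle, in my view, is not either computation in isolation but their apparent tension: one must simultaneously show that $H_\varphi$ is a genuinely bounded operator and that its symbol admits no bounded analytic correction. Boundedness hinges on recognizing the Hilbert matrix \emph{uniformly} across all the infinitely many degree blocks, whereas the failure of correction hinges on selecting the coordinate change $(z_1,z_2)\mapsto(z_1\bar z_2,z_2)$ that exposes the logarithmic diagonal singularity of $\varphi$. The delicate point in the latter is justifying that the $t$-average collapses $g$ to a constant; this is legitimate because $g=(\varphi+g)-\varphi\in L^1(\bT^2)$, both summands being integrable, so Fubini applies even when $g$ itself is unbounded.
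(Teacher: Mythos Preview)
The paper does not prove this theorem; it quotes it from Bakonyi--Timotin \cite{Ba-Ti} and uses it as a black box in the proof of \cref{Failure-Nehari}. So there is nothing to compare against here.

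Your argument is correct on both counts. For boundedness, the observation that $\varphi$ is supported on the anti-diagonal frequencies $(n,-n)$ does force $H_\varphi$ to preserve the $\mathbb Z$-grading of $L^2(\bT^2)$ by $a+b$ (sum of exponents), and the block matrices are exactly the column-truncated Hilbert matrix $(1/(i+p+1))$ as you computed; the uniform bound $\pi$ then gives $\|H_\varphi\|\le\pi$. For the second part, the measure-preserving change of variables $(z_1,z_2)\mapsto(w,t)=(z_1\bar z_2,z_2)$ turns $\varphi$ into the one-variable function $-\log(1-w)$, and your $t$-averaging argument is sound once $g\in L^1(\bT^2)$, which you correctly justify.

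One small clarification worth making explicit: the hypothesis ``$g$ holomorphic on $\bD^2$'' has to be read as ``the boundary function of $g$ has Fourier support in $\mathbb Z_{\ge0}^2$'' (equivalently, once you know $g\in L^1(\bT^2)$, that $g\in H^1(\bD^2)$). This is exactly the interpretation the paper uses in the proof of \cref{Failure-Nehari}, where from $\Psi\circ\theta\in L^\infty$ and $\Phi\in L^2$ it concludes $g\circ\theta\in H^2(\bD^d)$. With that reading, your Fourier-coefficient computation $\int_\bT g(wt,t)\,dt=g(0,0)$ goes through because the $(m,0)$ Fourier coefficient in $(w,t)$ corresponds to the $(m,-m)$ coefficient of $g$, which vanishes for $m\ne0$.
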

 The following theorem produces a counter example to  Nehari's theorem for big Hankel operators.
\begin{thm}\label{Failure-Nehari}
Consider the polydisc $\bD^d$ with a pseudo-reflection group $G$ acting on it as a pseudo-reflection group. There is a function $\tilde{\Phi}$ on the \v Silov boundary $\partial \theta(\bD^d)$ (which is known to be the image of the torus $\bT^d$ under $\theta$) such that the big Hankel operator $H_{\tilde{\Phi}}$ is a bounded operator. But there is no holomorphic function $g$ on $\theta(\bD^d)$ such that $\tilde{\Phi} + g$ is in $L ^\infty (\partial \theta(\bD^d), \mu_{\rho, \theta})$.
\end{thm}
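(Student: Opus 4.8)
The plan is to transport the two-variable counterexample of \cref{Bakonyi-Timotin} to the quotient by means of the identifications built in \cref{Hardy}. The first step is a \emph{transfer principle}. Since the symbol will be chosen $G$-invariant, multiplication by it commutes with the unitaries $U_\sigma$ implementing the $G$-action on $L^2(\partial\bD^d,\nu)$, and both $P_{H^2(\bD^d)}$ and $U_\sigma$ are compatible with $\Gamma_\rho$ and $\Gamma^\prime_\rho$. A short computation then shows that, for a $G$-invariant $u$ on $\bT^d$ which descends to a function $\tilde\Phi$ on $\partial\theta(\bD^d)$ (so that $u=\tilde\Phi\circ\theta$), the big Hankel operator $H_{\tilde\Phi}$ on $H_\rho^2(\theta(\bD^d))$ is unitarily equivalent to the restriction of the \emph{full} big Hankel operator $H^{\mathrm{full}}_u=(I-P_{H^2(\bD^d)})M_u|_{H^2(\bD^d)}$ to the $\rho$-covariant subspace $R_\rho^G(H^2(\bD^d))$. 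In particular, boundedness of $H^{\mathrm{full}}_u$ forces boundedness of $H_{\tilde\Phi}$, since the restriction of a bounded operator to a subspace is bounded.

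Next I would reduce the Nehari obstruction itself. If $\tilde\Phi+g\in L^\infty(\partial\theta(\bD^d),\mu_{\rho,\theta})$ with $g$ holomorphic on $\theta(\bD^d)$, then pulling back gives $u+g\circ\theta\in L^\infty(\bT^d,\nu)$; here I use that $|\ell_\rho|^2\,d\nu$ and $\nu$ have the same null sets off the $\nu$-null set $\mathcal Z(\ell_\rho)$, exactly as in \cref{Radial}, so the two $L^\infty$ spaces coincide. By the analytic Chevalley--Shephard--Todd theorem, $g\circ\theta$ is precisely a $G$-invariant holomorphic function on $\bD^d$, and conversely every such function descends. Thus it suffices to produce a $G$-invariant $u$ with $H^{\mathrm{full}}_u$ bounded that cannot be written as $u=-b+v$ with $b\in L^\infty(\bT^d)$ and $v$ a $G$-invariant holomorphic function. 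For the construction, let $K$ be the least common multiple of the orders of the diagonal rotations occurring in $G$, set $\varphi(z_1,z_2)=\sum_{n\ge1}\tfrac1n z_1^{Kn}\bar z_2^{Kn}$ (the Bakonyi--Timotin symbol read in the variables $z_1^K,z_2^K$, placed on the first two disc factors), and define $u=\sum_{\sigma\in G}U_\sigma\varphi$. This $u$ is $G$-invariant by construction and descends to the desired $\tilde\Phi$. Boundedness of $H^{\mathrm{full}}_u$ is the easy half: $\varphi$ depends on only two variables, so $H^{\mathrm{full}}_\varphi=H^{(12)}_\varphi\otimes I$ is bounded by \cref{Bakonyi-Timotin} (the case $K>1$ reduces to $K=1$ by intertwining with the proper map $z\mapsto z^K$), each $H^{\mathrm{full}}_{U_\sigma\varphi}=U_\sigma H^{\mathrm{full}}_\varphi U_\sigma^*$ is its unitary conjugate, and $H^{\mathrm{full}}_u$ is their finite sum.

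The heart of the matter is \emph{non-correctability}. Introduce the averaging operator $\Pi$ on $\bT^d$ that averages over the diagonal rotation $(z_1,z_2)\mapsto(e^{it}z_1,e^{it}z_2)$ and integrates out $z_3,\dots,z_d$; it is a contraction on $L^\infty(\bT^d)$ and projects the Fourier expansion onto the modes $\{\alpha:\alpha_1+\alpha_2=0,\ \alpha_3=\dots=\alpha_d=0\}$. Applied to a $G$-invariant holomorphic function it returns a constant, since the nonnegative octant meets that mode set only at the origin; applied to $u$ it returns $c_1\varphi+c_2\bar\varphi$, where $c_1,c_2\ge0$ count the $\sigma\in G$ whose permutation part sends the ordered pair $(1,2)$ to $(1,2)$, respectively to $(2,1)$. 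The choice $K=\operatorname{lcm}$ of rotation orders makes the rotational phases trivial on the surviving modes $Kn(e_1-e_2)$, so no cancellation occurs and $c_1\ge1$ (the identity contributes). Hence, were $u+g\circ\theta$ in $L^\infty$, then so would be $\Pi(u+g\circ\theta)=c_1\varphi+c_2\bar\varphi+\text{const}$; but $c_1\varphi+c_2\bar\varphi=-c_1\log(1-z_1^K\bar z_2^K)-c_2\log(1-\bar z_1^Kz_2^K)$ has real part $-(c_1+c_2)\log|1-z_1^K\bar z_2^K|$, which is unbounded as $z_1^K\bar z_2^K\to1$ because $c_1+c_2\ge1$. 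This contradiction finishes the proof; note that the argument works for every one-dimensional $\rho$, since neither the symbol nor the correction involves $\rho$.

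The step I expect to be most delicate is the last one, and specifically the guarantee that the symmetrization does not annihilate the two-variable part of $u$. All the subtlety lives in the interplay between the permutation and rotation components of $G$ (the bookkeeping behind $c_1\ge1$), which is exactly what the choice of $K$ and the projection $\Pi$ are engineered to control; the genuinely non-routine insight is that a single averaging operator $\Pi$ can simultaneously collapse every $G$-invariant holomorphic correction to a constant while preserving an unbounded real-logarithmic singularity of Bakonyi--Timotin type.
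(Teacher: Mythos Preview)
Your strategy coincides with the paper's: symmetrize the Bakonyi--Timotin symbol over $G$, transfer boundedness of the big Hankel via unitary conjugation by the $U_\sigma$, pull a hypothetical $L^\infty$ correction back to $\bT^d$, and then project onto the diagonal Fourier modes $\{z_1^n\bar z_2^n:n\in\mathbb Z\}$ to expose an unbounded logarithm. The paper realises this projection as the compression $P_{\mathcal M_0}M_{\Psi\circ\theta}P_{\mathcal M_0}$ followed by a unitary identification $\mathcal M_0\simeq L^2(\bT)$, which is exactly your averaging operator $\Pi$ in disguise.

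The one genuine gap is the $K$-trick. Your assertion that taking $K$ to be the lcm of the orders of the diagonal rotations in $G$ forces $\Pi(u)=c_1\varphi+c_2\bar\varphi$ with nonnegative integers $c_1,c_2$ fails for a general finite pseudo-reflection group acting on $\bD^d$: the nonzero entries $\alpha_{j\sigma}$ of a monomial element $\sigma$ need not be roots of unity individually (only products along cycles of the permutation $\tilde\sigma$ must be). For instance, $G=\{I,\sigma\}$ with $\sigma(z_1,z_2)=(az_2,\bar a z_1)$ and $a\in\bT$ of infinite order is a reflection group on $\bD^2$; the only diagonal element is $I$, so your $K=1$, and $\Pi(u)=\varphi+\sum_{n\ge1}\tfrac{a^{2n}}{n}\bar z_1^{n}z_2^{n}$, which is not of the claimed form. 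The paper sidesteps the $K$-device altogether: it keeps the unmodified Bakonyi--Timotin $\varphi$ and splits the $\sigma$ that survive the projection according to whether $\alpha_{1\sigma}=\alpha_{2\sigma}$ (the sets $\Lambda_1',\Lambda_2'$). Only those terms carry their logarithmic singularity at $w=z_1\bar z_2=1$, the remaining terms stay bounded there, and since the identity always lies in $\Lambda_1'$ the real part of the projected symbol is driven to $+\infty$ at $w=1$ with no possibility of cancellation. Your argument is easily repaired along the same lines---the insight you single out (one averaging collapses every holomorphic correction to a constant while preserving the logarithmic blow-up) is precisely what the paper exploits.
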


\begin{proof}
To produce the function $\tilde{\Phi}$, consider the function $\varphi$ as in \cref{Bakonyi-Timotin} and define
$$f(z_1, \dots, z_d)= \varphi(z_1, z_2).  $$ 

Any member $\sigma$ of the pseudo-reflection group $G$ is a automorphism of $\bD^d$ which keeps the origin fixed and hence there exist a permutation $\tilde{\sigma}$ in $S_d$ and $d$ scalars $\alpha_{1\sigma}, \dots, \alpha_{d \sigma} \in \bT$ such that  
\begin{align}\label{Eq: Phi}
\sigma (\boldsymbol{z})= \left( \alpha_{1\sigma} z_{\tilde{\sigma}(1)}, \dots, \alpha_{d\sigma} z_{\tilde{\sigma}(d)} \right).
\end{align}
Then, 
$$ f \circ \sigma (\boldsymbol{z})= \sum_{n\in \mathbb{N}} \frac{1}{n} \alpha_{1\sigma}^n \bar{\alpha}_{2\sigma}^n z_{\tilde{\sigma}(1)}^n \bar{z}_{\tilde{\sigma}(2)}^n .$$
Define
$$
\Phi = \frac{1}{|G|} \sum_{\sigma \in G} f \circ \sigma.
$$

We claim that the big Hankel operator $H_{f \circ \sigma}$ is bounded on $H^2(\mathbb{T}^d)$ for each $\sigma \in G$. To that end, assume that $\sigma$ is as above. Consider the unitary map $U_\sigma$ from $L^2(\bT^d)$ onto itself defined as
$$
U_\sigma (\psi)= \psi \circ \sigma \text{ for } \psi \in L^2(\bT^d).
$$
Also, $U_\sigma$ reduces $H^2(\bD^d)$. We take a holomorphic polynomial of the form $g \circ \sigma$. Then
\begin{align*}
H_{f \circ \sigma}(g \circ \sigma) = \left( I- P_{H^2(\bD^d)}\right) (f \circ \sigma \  g\circ \sigma)
\end{align*} 
It can be shown using the map $U_\sigma$ that 
\begin{align*}
\|\left( I- P_{H^2(\bD^d)} \right)  (f g)\circ \sigma \| = \|\left( I- P_{H^2(\bD^d)} \right) (f g) \| \leq \|H_{f}\| \|g\|.
\end{align*}
Therefore 
\begin{align*}
\| H_{f \circ \sigma}(g \circ \sigma)\| \leq \| H_{f}\| \|g \circ \sigma \| \ (\text{ since } U_\sigma \text{ is unitary}).
\end{align*}
This proves the claim. So, the map
$$
H_{\Phi} = \frac{1}{|G|} \sum_{\sigma \in G} H_{f \circ \sigma}
$$
defines a bounded big Hankel operator on $H^2(\bT^d)$. In fact, $\Phi $ is a $G$-invariant function and hence there is a $\tilde{\Phi}$ on $\partial \theta(\bD^d)$ such that $\Phi = \tilde{\Phi} \circ \theta $.
Now, \begin{align*}
\Gamma_\rho H_{\tilde{\Phi}}(\psi) &= \Gamma_\rho (\tilde{\Phi} \psi) - \Gamma_\rho P_{H_\rho ^2 (\theta(\bD^d))} (\tilde{\Phi} \psi)\\
                                           &= \frac{\ell_\rho}{\sqrt{|G|}} (\tilde{\Phi} \psi)\circ \theta - P_{R_\rho ^G H^2(\bD^d)} \Gamma_\rho (\tilde{\Phi} \psi)\\
                                           &= \left( I- P_{H^2(\bD^d)} \right) \Phi \Gamma_\rho (\psi)= H_{\Phi}\Gamma_\rho (\psi).
\end{align*}

Since $H_{\Phi}$ is a bounded operator and $\Gamma_\rho $ is a unitary, $H_{\tilde{\Phi}}$ is a bounded big Hankel operator on $H_\rho^2(\theta(\Omega))$.

 To prove the remaining part of the theorem, assume on the contrary that there exists a holomorphic function $g$ such that $\Psi = \tilde{\Phi} + g$ is in $L^\infty (\partial \theta(\bD^d), \mu_{\rho, \theta})$. Then we have $$\Psi \circ \theta = \tilde{\Phi} \circ \theta + g\circ \theta = \Phi + g\circ \theta .$$
Also, $\Psi \circ \theta $ is in $L^\infty (\bT^d)$. This implies that $g \circ \theta $ is in $H^2(\bD^d)$. 
where $\sigma $ is a permutation in $S_d$. 
Now putting the explicit expression of $\sigma \in G$ in \eqref{Eq: Phi}, we have
$$ |G| \ \Phi(\boldsymbol{z}) = \sum_{\sigma \in G} f \left( \alpha_{1\sigma} z_{\tilde{\sigma}(1)}, \dots, \alpha_{d\sigma} z_{\tilde{\sigma}(d)} \right) = \sum_{\sigma \in G} \sum_{n \in \mathbb{N}} \frac{1}{n} \alpha_{1\sigma}^n \overline{\alpha}_{2\sigma}^n z_{\tilde{\sigma}(1)}^n \overline{z}_{\tilde{\sigma}(2)}^n.
$$
Note that for the identity element $\operatorname{I}_d$ of $G$, $\tilde{\sigma}$ is the identity permutation and $\alpha_{j\sigma}=1$ for all $j$. Therefore the sum above can be written as

\begin{align}\label{Phi}
|G| \ \Phi(\boldsymbol{z})&= \sum_{n\in \mathbb{N}}^\infty \frac{1}{n} z_1^n \bar{z}_2^n + \sum_{\sigma \in G\setminus \{\operatorname{I}_d\}} \sum_{n \in \mathbb{N}} \frac{1}{n} \alpha_{1\sigma}^n \overline{\alpha}_{2\sigma}^n z_{\tilde{\sigma}(1)}^n \overline{z}_{\tilde{\sigma}(2)}^n
\end{align}
Consider the closed subspace $\mathcal{M}_0$ of $L^2(\bT^d)$ defined by 
$$\mathcal{M}_0 = \left\lbrace \psi \in L^2(\bT^d): \psi (\boldsymbol{z})= \sum_{n \in \mathbb{Z}} a_n z_1^n \bar{z}_2^n \text{ s.t. } \sum_{n\in \mathbb{Z}} |a_n|^2 <\infty \right\rbrace $$
and the orthogonal projection $\operatorname{P}_{\mathcal{M}_0}$  from $L^2(\bT^d)$ onto $\mathcal{M}_0$. Since $\Psi \circ \theta \in L^\infty(\bT^d)$,
\begin{align}\label{eq-1}
\|\Psi \circ \theta \|_\infty = \|\operatorname{M}_{\Psi \circ \theta} \| \geq \| \operatorname{P}_{\mathcal{M}_0} \operatorname{M}_{\Psi \circ \theta} \operatorname{P}_{\mathcal{M}_0} \|.
\end{align}
But $g\circ \theta $ being holomorphic, a simple calculation shows that
$$
\operatorname{P}_{\mathcal{M}_0} \operatorname{M}_{\Psi \circ \theta} \operatorname{P}_{\mathcal{M}_0} = \operatorname{P}_{\mathcal{M}_0} \operatorname{M}_{\Phi + g\circ \theta(0)} \operatorname{P}_{\mathcal{M}_0}. 
$$
Therefore, $\|\Psi \circ \theta \|_\infty \geq \| \operatorname{P}_{\mathcal{M}_0} \operatorname{M}_{\Phi + g\circ \theta(0)} \operatorname{P}_{\mathcal{M}_0} \|$. For $\psi =\sum_{n \in \mathbb{Z}} a_n z_1^n \bar{z}_2^n $ in $\mathcal{M}_0$, define a linear map $V: \mathcal{M}_0 \to L^2(\bT)$ by
$$ V(\psi) = \sum_{n \in \mathbb{Z}} a_n z ^n. $$ Clearly, $V$ is a unitary. Further, for $\psi$ as above we have

$$
\| \operatorname{P}_{\mathcal{M}_0} \operatorname{M}_{\Phi + g\circ \theta(0)} \operatorname{P}_{\mathcal{M}_0}(\psi)\| =  \| \operatorname{M}_{\Phi_0 + g\circ \theta(0)} (V(\psi) )\|
$$
where $\Phi_0 = V(\operatorname{P}_{\mathcal{M}_0} \Phi) $ and $\operatorname{M}_{\Phi_0 + g\circ \theta(0)}$ is the multiplication by $\Phi_0 + g\circ \theta(0)$ on $L^2(\bT)$. So, \eqref{eq-1} implies that $$ \|\Psi \circ \theta \|_\infty \geq \| \operatorname{M}_{\Phi_0 + g\circ \theta(0)} \| $$ and hence $\Phi_0 + g\circ \theta(0)$ must be in $L^\infty(\bT)$. 
From \eqref{Phi} we get
\begin{align*}
\operatorname{P}_{\mathcal{M}_0} \Phi(\boldsymbol{z})=\frac{1}{|G|} \sum_{\substack{\tilde{\sigma}(1)=1,\\ \tilde{\sigma}(2)=2}}  \sum_{n \in \mathbb{N}} \frac{1}{n} \alpha_{1\sigma}^n \overline{\alpha}_{2\sigma}^n z_1^n \overline{z}_2^n + \frac{1}{|G|} \sum_{\substack{\tilde{\sigma}(1)=2,\\ \tilde{\sigma}(2)=1}}  \sum_{n \in \mathbb{N}} \frac{1}{n} \alpha_{1\sigma}^n \overline{\alpha}_{2\sigma}^n z_{2}^n \overline{z}_{1}^n.
\end{align*}
Therefore, 
\begin{align*}
\Phi_0(z)= \frac{1}{|G|} \underbrace{\sum_{n \in \mathbb{N}} \frac{1}{n} \sum_{\substack{\tilde{\sigma}(1)=1,\\ \tilde{\sigma}(2)=2}}   (\alpha_{1\sigma}\overline{\alpha}_{2\sigma})^n z^n }_{\textbf{ sum (i) }}+ \frac{1}{|G|}
 \underbrace{\sum_{n \in \mathbb{N}} \frac{1}{n} \sum_{\substack{\tilde{\sigma}(1)=2,\\ \tilde{\sigma}(2)=1}}   (\alpha_{1\sigma}\overline{\alpha}_{2\sigma})^n \bar{z}^n }_{\textbf{ sum (ii)}}.
\end{align*}
Suppose $\Lambda_1 $ is the subset of $G$ consisting of $\sigma$ appearing in $\textbf{sum (i)}$ and $\Lambda_1'$ consists of those $\sigma \in \Lambda_1$  such that $\alpha_{1\sigma} = \alpha_{2\sigma}$. Similarly, suppose that $\Lambda_2$ is the subset of $G$ consisting of $\sigma$ appearing in $\textbf{sum (ii)}$ and $\Lambda_2'$ consists of those $\sigma$ in $\Lambda_2$ such that $\alpha_{1\sigma} = \alpha_{2\sigma}$. The the expression above can be written as
\begin{align*}
|G|\Phi_0(z) &= \sum_{n \in \mathbb{N}} \frac{|\Lambda_1'|}{n} z^n + \sum_{n \in \mathbb{N}} \frac{1}{n} \sum_{\sigma \in \Lambda_1 \setminus \Lambda_1'}   (\alpha_{1\sigma}\overline{\alpha}_{2\sigma})^n z^n \\
            & + \sum_{n \in \mathbb{N}} \frac{|\Lambda_2'|}{n} \bar{z}^n + \sum_{n \in \mathbb{N}} \frac{1}{n} \sum_{\sigma \in \Lambda_2 \setminus \Lambda_2'}   (\alpha_{1\sigma}\overline{\alpha}_{2\sigma})^n \bar{z}^n 
\end{align*}
Since the identity map $\operatorname{I}_d \in \Lambda_1'$, $|\Lambda_1'| \geq 1$. But $|\Lambda_2'|$ can be zero. Therefore as $z \in \bT$ approaches towards $1$ the first sum blows up to infinity and the same happens for the third sum provided $|\Lambda_2'| \geq 1$. On the other hand, since $\alpha_{1\sigma} \neq \alpha_{2\sigma}$ for each $\sigma$ appearing in the second and the fourth sum, they remains bounded as $z \in \bT$ approaches to $1$. Hence, we conclude that $\Phi_0 $ does not belong to $L^\infty(\bT)$. This contradicts the fact that $\Phi_0 + g\circ \theta \in L^\infty(\bT)$. The proof is complete.
\end{proof}

\part{Projectivity of Hilbert modules}

For the rest of the paper, the domain $\Omega$ will be the polydisc $\bD^d$. The measure $\nu$ is then the normalized Haar measure on $\bT^d$. The pseudo-reflection group will be $G(m,t,d)$ where $m$, $t$ and $d$ are positive integers such that $t$ divides $m$. The elements of this group are unitaries of the form 
$$ (z_1, \ldots , z_d) \mapsto (e^{(2\pi i \nu_1)/m} z_{\sigma(1)}, \ldots , e^{(2\pi i \nu_d)/m} z_{\sigma(d)})$$
where $\sigma$ is a permutation of $\{1, \ldots , d\}$ and $\nu_i$ are integers whose sum is divisible by $t$. Rudin found the basic polynomial maps explicitly for these groups in \cite{Rudin-IUMJ}. Let $E_1, \ldots E_d$ be the elementary symmetric polynomials in $d$ variables. Then,
\begin{equation} \label{RudinThetas}
\theta_i(z_1, \ldots , z_d) = E_i (z_1^m, \ldots , z_d^m) \text{ for } 1 \le i \le d-1 \text{ and } \theta_d(z_1, \ldots , z_d) = (z_1 \ldots z_d)^{m/t}.\end{equation}
  The group $G(m,t,d) $ is a pseudo-reflection group of order $\frac{m^d d!}{t}$.    Note that the choice $(m, t)=(1,1)$ gives $G(1,1,d) = S_d$. 


\section{A non-cramped Hilbert module: A Pisier-type example} \label{Pisier}

Continuing to denote  the coordinates of $\theta(\bD^d)$ by $\boldsymbol{p}=(p_1,p_2\dots, p_d)$ (i.e., $p_j \circ \theta = \theta_j$), we observe that  a Hilbert module $\mathcal{H}$ over $\cA(\theta(\bD^d))$ is contractive if and only if $\theta(\bD^d)$ is a spectral set for the $d$-tuple $\left( T_{p_1},\dots, T_{p_d}\right)$.

While it is obvious that the objects of $\mathfrak{C}$ are objects of $\mathfrak{H}$, the converse is intricate. Indeed, this question for the classical case of $\cA(\bD)$ was open for long time until Pisier \cite{Pisier} settled it in the negative: \textit{there is a polynomially bounded Hilbert space operator which is not similar to a contraction}. 
\begin{thm}\label{CneqH}
The category $\mathfrak{H}$ is strictly larger than the category $\mathfrak{C}$.
\end{thm}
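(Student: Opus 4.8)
The plan is to produce a single Hilbert module lying in $\mathfrak{H}$ but not in $\mathfrak{C}$, built from the operator $Q$ furnished by Pisier: a polynomially bounded operator on a Hilbert space $\cK$ that is not similar to a contraction. The engine is a pullback along the diagonal analytic disc. Writing $\phi(\zeta)=\theta(\zeta,\dots,\zeta)$, which maps $\bD$ into $\theta(\bD^d)$, I attach to any polynomially bounded $T$ on a Hilbert space the action of $f\in\cA(\theta(\bD^d))$ given by $f\cdot v=(f\circ\phi)(T)\,v$; this makes sense because $f\circ\phi\in\cA(\bD)$ and $T$ carries a bounded $\cA(\bD)$ functional calculus. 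Since $\|(f\circ\phi)(T)\|\le C\|f\circ\phi\|_{\infty,\bD}\le C\|f\|_{\infty,\theta(\bD^d)}$, this is a bounded Hilbert module, i.e.\ an object of $\mathfrak{H}$. Its coordinate operators are $T_{p_j}=\phi_j(T)$; in particular, because $\theta_d=(z_1\cdots z_d)^{m/t}$, one gets $T_{p_d}=\phi_d(T)=T^{N}$ with $N=dm/t$ a positive integer.

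The non-crampedness criterion comes from the last coordinate. As $\theta_d(\bD^d)=\bD$ and $\|p_d\|_{\infty,\theta(\bD^d)}=1$, for every polynomial $q$ we have $\|q(p_d)\|_{\infty,\theta(\bD^d)}=\|q\|_{\infty,\bD}$. If the module were cramped there would be an invertible intertwiner $X$ rendering $\theta(\bD^d)$ a spectral set for $(XT_{p_j}X^{-1})$; testing this on the functions $q(p_d)$ gives $\|q(XT_{p_d}X^{-1})\|\le\|q\|_{\infty,\bD}$ for all $q$, so $XT_{p_d}X^{-1}$ is a contraction and $T_{p_d}$ is similar to a contraction. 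Here lies the main obstacle: along any interior analytic disc $p_d$ pulls back to a \emph{pure power}, $T_{p_d}=T^{N}$, because every basic polynomial of $G(m,t,d)$ is assembled from $m$-th powers and so no coordinate restricts to an automorphism of $\bD$. Thus feeding Pisier's $Q$ in directly would only force $Q^{N}$ — not $Q$ — to be similar to a contraction, which need not contradict anything.

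To defeat the power I will feed in an $N$-th root of (copies of) $Q$ that is still polynomially bounded. Let $R$ be the $N\times N$ operator companion matrix on $\cK^{(N)}$ with the identity on the subdiagonal and $Q$ in the upper-right corner; cyclically shifting the copies shows $R^{N}=Q\oplus\cdots\oplus Q$ ($N$ summands). A polyphase decomposition $p(R)=\sum_{r=0}^{N-1}R^{r}\,p_r(R^{N})$, combined with the boundary estimate $\|p_r\|_{\infty,\bD}\le\|p\|_{\infty,\bD}$ for the components $p_r(\zeta^{N})$ of $p$, yields $\|p(R)\|\le\big(C_Q\sum_{r}\|R^{r}\|\big)\|p\|_{\infty,\bD}$, so $R$ is polynomially bounded. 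Running the construction of the first paragraph with $R$ in place of $T$ again gives an object of $\mathfrak{H}$, now with $T_{p_d}=R^{N}=\bigoplus^{N}Q$. Were this module cramped, the second paragraph would make $\bigoplus^{N}Q$ similar to a contraction; since similarity to a contraction passes to reducing direct summands (through Paulsen's identification of this property with complete polynomial boundedness, which restricts to corners), $Q$ itself would be similar to a contraction — contrary to its choice. Hence the module lies in $\mathfrak{H}\setminus\mathfrak{C}$, proving $\mathfrak{H}$ strictly larger than $\mathfrak{C}$. I expect the two points requiring care to be the polyphase estimate establishing polynomial boundedness of $R$, and the descent from the direct sum $\bigoplus^{N}Q$ back to the single summand $Q$.
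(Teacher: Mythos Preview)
Your argument is correct and takes a genuinely different route from the paper's. Both proofs pull back the $\cA(\theta(\bD^d))$-action along the diagonal disc $\zeta\mapsto\theta(\zeta,\dots,\zeta)$ and both confront the same obstacle: every coordinate restricts to a power along the diagonal, so the contradiction must be extracted from a \emph{power} of the input operator. The paper resolves this by working with the concrete Davidson--Paulsen operator $F$ and proving, by a direct (and somewhat technical) analysis of the CAR-type generators $C_i$ and $W_i$, that $F^m$ is not similar to a contraction for every $m$; then using the first coordinate $p_1$ (for which $\theta_1(\zeta,\dots,\zeta)=d\zeta^{m}$) crampedness would force $T^m$ to be similar to a contraction. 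You instead keep the Pisier operator $Q$ abstract, manufacture a polynomially bounded $N$-th root $R$ of $Q^{\oplus N}$ via the companion block matrix, and then pass from the direct sum back to $Q$ through Paulsen's complete-polynomial-boundedness characterisation.

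What each approach buys: the paper's method is self-contained modulo the Davidson--Paulsen machinery but is tied to that specific example and needs the extra Step~2 computation. Your approach is modular---it works for \emph{any} polynomially bounded operator not similar to a contraction---and replaces the Step~2 analysis by two short, standard facts: the polyphase bound $\|p_r\|_{\infty,\bD}\le\|p\|_{\infty,\bD}$ (obtained by averaging $p(\omega^k z)$ over $N$-th roots of unity), and the stability of complete polynomial boundedness under restriction to reducing summands. Your choice of the last coordinate $p_d$, with $\|p_d\|_{\infty,\theta(\bD^d)}=1$, also avoids the scalar $d$ that appears in the paper's use of $p_1$. The two points you flagged as needing care are exactly right; both go through as you indicate.
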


The rest of the section will prove this theorem. 
\begin{definition}\label{D:TheUsualHeroes}
A $d$-tuple of commuting bounded normal operators $(T_1,\dots, T_d)$ on a Hilbert space $\mathcal{H}$ is said to be a $\overline{\theta(\bD^d)}$-unitary provided the Taylor joint-spectrum

\noindent $\sigma_{Taylor} (T_1,\dots, T_d)$ is contained in the \v Silov boundary $\partial \theta(\bD^d)=\theta(\bT^d)$. If a $d$-tuple of commuting bounded operators $(T_1,\dots, T_d)$ is the restriction of a $\overline{\theta(\bD^d)}$-unitary to a joint-invariant subspace then, $(T_1,\dots, T_d)$ is said to be a $\overline{\theta(\bD^d)}$-isometry.

A Hilbert module over $\cA(\theta(\bD^d))$ is said to be \textit{isometric} (or, \textit{unitary}) if $\left( T_{p_1},\dots, T_{p_d}\right)$ is a $\overline{\theta(\bD^d)}$-isometry (or, a $\overline{\theta(\bD^d)}$-unitary). A contractive Hilbert module over $\cA(\theta(\bD^d))$ is said to be {\em pure} if $\left( T_{p_1},\dots, T_{p_d}\right)$ is a $\overline{\theta(\bD^d)}$-contraction and $T_{p_d}$ is a pure contraction. 
\end{definition}
It is easy to verify that the tuple of co-ordinate multipliers $(M_{p_1}, \dots, M_{p_d})$ is a $\overline{\theta(\bD^d)}$-unitary on $L^2(\partial \theta(\bD^d), \mu_{\rho, \theta})$ and the restriction of this tuple to the Hardy space $H_\rho ^2(\theta(\bD^d))$ is a $\overline{\theta(\bD^d)}$-isometry.

\begin{example}The Hilbert modules
$H_\rho ^2(\theta(\bD^d))$ and $L^2(\partial \theta(\bD^d), \mu_{\rho, \theta})$ over $\cA(\theta(\bD^d))$ with the usual action of polynomials in $p_1, \dots, p_d$ are two examples of contractive Hilbert modules. Furthermore, it is also easy to check that the first one is a $\overline{\theta(\bD^d)}$-isometric module and the second one is a $\overline{\theta(\bD^d)}$-unitary module.
\end{example}

\noindent {\em Proof of Theorem \ref{CneqH}.} 
Step 1 recalls the Davidson-Paulsen example because Step 2 uses the finer details of it. 

\textbf{Step 1: A brief recall of the Davidson-Paulsen example \cite{Davidson-Paulsen}:}

The $2 \times 2$ matrices $
V=\begin{bmatrix}
1 & 0 \\
0 & -1
\end{bmatrix},
C=\begin{bmatrix}
0 & 0 \\
1 & 0
\end{bmatrix}, \text{ and }
I_2= \begin{bmatrix}
1 & 0 \\
0 & 1
\end{bmatrix}
$
satisfy 
\begin{align} \label{Pisier-exp-eqn}
V^2= I_2, C^2=0, CV=C, VC=-C, C^*C = \begin{bmatrix}
1 &0 \\
0 &0
\end{bmatrix}=E_{11}, \text{ and } CC^* = \begin{bmatrix}
0 &0\\
0& 1
\end{bmatrix}= E_{22}.
\end{align}
For each $n \in \mathbb{N}$,  the operators,
$$
C_i = V^{\otimes i} \otimes C \otimes I_2^{\otimes (n-i-1)} \text{ on } (\bC^2)^{\otimes i} \otimes \bC^2 \otimes (\bC^2)^{\otimes (n-i-1)} \text{ for } 0 \leq i \leq n-1
$$
 act on $\bC^{2^n}$. Using \eqref{Pisier-exp-eqn}, we get that for $0 \leq i \leq n-1$, 
\begin{enumerate}
\item[(i)] $C_i^2 =0$;
\item[(ii)] $C_i^* C_i = I_2^{\otimes i } \otimes E_{11} \otimes  I_2^{\otimes n-i-1 } ;$
\item[(iii)] $C_i C_i^* = I_2^{\otimes i } \otimes E_{22} \otimes  I_2^{\otimes n-i-1 } ;$
\item[(iv)] $C_i C_j + C_j C_i =0$ and $ C_i C_j^* + C_j ^* C_i =0$ for $0\leq i <j \leq n-1$.
\end{enumerate}
Then we have the following norm inequalities \cite[Section 1]{Davidson-Paulsen}.
\begin{proposition}
For given complex scalars $a_0, \dots, a_{n-1}$, we have 
\begin{align} \label{norm-estimation-C-i}
\frac{1}{2} \sum_{i=0}^{n-1} |a_i| \leq \left\|  \sum_{i=0}^{n-1} a_i C_i \otimes C_i \right\| \leq \sum_{i=0}^{n-1} |a_i|.
\end{align}
\end{proposition}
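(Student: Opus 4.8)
The plan is to prove the two inequalities separately; the upper bound is routine and the lower bound carries all the content.

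For the upper bound I would invoke the triangle inequality together with $\|C_i\otimes C_i\|=\|C_i\|^2$. Relation (ii) says $C_i^*C_i=I_2^{\otimes i}\otimes E_{11}\otimes I_2^{\otimes n-i-1}$, a (nonzero) projection, so $\|C_i\|^2=\|C_i^*C_i\|=1$ and hence $\|C_i\otimes C_i\|=1$. Thus $\big\|\sum_i a_iC_i\otimes C_i\big\|\le\sum_i|a_i|$ at once.

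The lower bound is the main obstacle, because it is an $\ell^1$ estimate while every naive device produces only the $\ell^2$ quantity $(\sum_i|a_i|^2)^{1/2}$ (or something weaker): a product vector $\phi\otimes\phi$, the ``vacuum'' vector, and trace duality against $\sum_i(\bar a_i/|a_i|)(C_i\otimes C_i)^*$ all lead to $(\sum_i|a_i|^2)^{1/2}$-type bounds, which are useless for large $n$ since $\sum_i|a_i|$ can exceed $(\sum_i|a_i|^2)^{1/2}$ by a factor $\sqrt n$. The culprit is the Jordan--Wigner signs carried by the factors $V^{\otimes i}$. The key idea I would use is to pass to an invariant subspace on which those signs cancel. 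Index the standard product basis of $\bC^{2^n}=(\bC^2)^{\otimes n}$ by subsets: for $S\subseteq\{0,\dots,n-1\}$ let $e_S$ have $e_2$ in the slots of $S$ and $e_1$ elsewhere, so that $C_ie_S=\varepsilon_i(S)\,e_{S\cup\{i\}}$ with the sign $\varepsilon_i(S)=(-1)^{|S\cap\{0,\dots,i-1\}|}$ when $i\notin S$, and $C_ie_S=0$ otherwise. On the diagonal subspace $\mathcal D=\overline{\operatorname{span}}\{e_S\otimes e_S\}$ one has $(C_i\otimes C_i)(e_S\otimes e_S)=\varepsilon_i(S)^2\,e_{S\cup\{i\}}\otimes e_{S\cup\{i\}}=e_{S\cup\{i\}}\otimes e_{S\cup\{i\}}$ (for $i\notin S$, and $0$ otherwise): the sign is squared away, $\mathcal D$ is invariant, and under the unitary $e_S\otimes e_S\mapsto e_S$ the restriction of $\sum_i a_iC_i\otimes C_i$ to $\mathcal D$ becomes the \emph{signless} operator $M:=\sum_{i=0}^{n-1}a_iC^{(i)}$ on $(\bC^2)^{\otimes n}$, where $C^{(i)}=I_2^{\otimes i}\otimes C\otimes I_2^{\otimes n-i-1}$. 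Consequently $\big\|\sum_i a_iC_i\otimes C_i\big\|\ge\|M\|$.

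Finally I would bound $\|M\|$ below by a single product test vector tailored to the phases of the $a_i$. Put $c_i=a_i/|a_i|$ (and $c_i=1$ if $a_i=0$) and take the unit vector $\psi=\bigotimes_{i=0}^{n-1}\tfrac1{\sqrt2}\begin{bmatrix}1\\ c_i\end{bmatrix}$. A slot-by-slot computation gives $\langle C^{(i)}\psi,C^{(i)}\psi\rangle=\tfrac12$ and $\langle C^{(i)}\psi,C^{(j)}\psi\rangle=\tfrac14\,\bar c_i c_j$ for $i\ne j$; since $a_i\bar c_i=|a_i|$ is real and nonnegative,
\[
\|M\psi\|^{2}=\frac12\sum_i|a_i|^{2}+\frac14\Big[\Big(\sum_i|a_i|\Big)^{2}-\sum_i|a_i|^{2}\Big]=\frac14\sum_i|a_i|^{2}+\frac14\Big(\sum_i|a_i|\Big)^{2}\ge\frac14\Big(\sum_i|a_i|\Big)^{2}.
\]
Hence $\|M\|\ge\|M\psi\|\ge\tfrac12\sum_i|a_i|$, and combined with the previous paragraph this yields $\big\|\sum_i a_iC_i\otimes C_i\big\|\ge\tfrac12\sum_i|a_i|$. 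I expect the only delicate point to be the bookkeeping in the first paragraph — that the diagonal subspace annihilates the Jordan--Wigner signs and turns the $C_i\otimes C_i$ into commuting square-zero creation operators $C^{(i)}$. Once that reduction is made, the vector $\psi$ above is precisely what converts the quadratic-mean obstruction into the arithmetic-mean lower bound, and the constant $\tfrac12$ is exactly what the identity $\|M\psi\|^2=\tfrac14\|a\|_2^2+\tfrac14\|a\|_1^2$ forces.
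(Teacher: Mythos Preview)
Your argument is correct. The upper bound is immediate, and for the lower bound your two-step reduction is sound: the diagonal subspace $\mathcal D=\overline{\operatorname{span}}\{e_S\otimes e_S\}$ is genuinely invariant under each $C_i\otimes C_i$ (the Jordan--Wigner sign $\varepsilon_i(S)$ gets squared), and the unitary $e_S\otimes e_S\mapsto e_S$ conjugates the restriction to the signless operator $M=\sum_i a_iC^{(i)}$. The test-vector computation with $\psi=\bigotimes_i\tfrac1{\sqrt2}(1,c_i)^T$ then checks out line by line and produces exactly $\|M\psi\|^2=\tfrac14\|a\|_2^2+\tfrac14\|a\|_1^2\ge\tfrac14\|a\|_1^2$.

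Note, however, that the paper does \emph{not} prove this proposition; it simply quotes it from \cite[Section~1]{Davidson-Paulsen} as part of the recall of the Davidson--Paulsen construction. So there is no ``paper's own proof'' to compare against. What you have written is a clean, self-contained proof in the spirit of the original Pisier/Davidson--Paulsen arguments: the standard route there also exploits that the $C_i\otimes C_i$ are \emph{commuting} square-zero operators (since $C_iC_j=-C_jC_i$ forces $(C_i\otimes C_i)(C_j\otimes C_j)=(C_j\otimes C_j)(C_i\otimes C_i)$), which is precisely what your passage to $\mathcal D$ makes explicit. Your choice of the phased product vector $\psi$ is the decisive move that upgrades the naive $\ell^2$ bound to the required $\ell^1$ bound, and it is essentially the same mechanism that appears in the cited source.
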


For each $n\in \mathbb{N}$, we relabel the operators $C_i$ as $C_{i,n}$ for $0 \leq i \leq {n-1}$ and we define $C_{i,n} =0$ for $i \geq n$. The operators $W_i$ defined on $\oplus_{n=1}^{\infty} \bC^{2^n}$ by $W_i = \oplus_{n=1}^{\infty} C_{i,n}$ satisfy
\begin{enumerate}
\item[(i)] $
\|\sum_{i=0}^\infty \alpha_i W_i \|^2 = \sum_{i=0}^\infty |\alpha_i|^2 \text{ for } (\alpha_0, \alpha_1, \dots) \in \ell^2$;
\item[(ii)] $ \frac{1}{2} \sum_{i=0}^{n-1} |a_i| \leq \left\|  \sum_{i=0}^{n-1} a_i C_{i,n} \otimes W_i \right\| \leq \sum_{i=0}^{n-1} |a_i| \,  \text{ for  }a_0, \dots, a_{n-1} \in \mathbb{C}$;
\item[(iii)] $W_i W_j + W_j W_i = 0;$ and 
\item[(iv)] $W_i W_j^* + W_j^* W_i = \delta_{ij} (I - P_i)$ where $P_i$ is the projection from $\oplus_{n=1}^{\infty} \bC^{2^n}$ onto $\oplus_{n=1}^{i} \bC^{2^n}$.
\end{enumerate}
With this preparation, we quote a very spacial case of \cite[Theorem 3.1]{Davidson-Paulsen}. 
\begin{thm}\label{Davidson-Paulsen-exp}
Consider the sequence $\{a_k\}$ where $a_k= (k+1)^{-\frac{3}{2}}$ for $k \geq 0$. Let $X= (a_{i+j} W_{i+j})_{i,j}$ be a Hankel operator on $\ell^2(\cH)$ where $\cH= \oplus_{n=1}^{\infty} \bC^{2^n}$. Consider the operator 
$$
F= \begin{bmatrix}
S^* & X \\
0 & S
\end{bmatrix}
$$
where $S: \ell^2(\cH) \rightarrow \ell^2(\cH)$ such that $S(h_0, h_1, \dots)= (0, h_0, h_1, \dots)$ is the shift operator of multiplicity $dim(\cH)$. Then $F$ is  then $F$ is polynomially bounded and is not similar to a contraction.
\end{thm}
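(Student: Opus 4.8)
The plan is to establish the two assertions separately: that $F$ is polynomially bounded, and that it is not similar to a contraction. For the second I would use Paulsen's criterion, namely that similarity to a contraction is equivalent to complete polynomial boundedness, so that it suffices to exhibit matricial polynomials witnessing an unbounded completely bounded constant. The whole argument rests on the contrast between the $\ell^2$-type estimate $\|\sum\alpha_iW_i\|=(\sum|\alpha_i|^2)^{1/2}$ and the $\ell^1$-type lower bound $\tfrac12\sum|c_i|\le\|\sum c_iC_{i,n}\otimes W_i\|$ supplied above: the scalar (polynomially bounded) side feels only the former, while the matricial (completely bounded) side feels the latter.

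For polynomial boundedness I would first record the block computation. Since $X$ is a Hankel operator relative to the shift one has the intertwining relation $S^*X=XS$, so $(S^*)^jX=XS^{j}$ by induction, and the off-diagonal corner of $F^{k}$ collapses to $kXS^{k-1}$. Summing against the coefficients of a polynomial $p$ gives
\begin{align*}
p(F)=\begin{bmatrix} p(S^*) & Xp'(S)\\ 0 & p(S)\end{bmatrix}.
\end{align*}
The diagonal corners are harmless: $S$ and $S^*$ are contractions, so von Neumann's inequality yields $\|p(S)\|,\|p(S^*)\|\le\|p\|_\infty$. Hence polynomial boundedness reduces to the single estimate $\sup_{\|p\|_\infty\le1}\|Xp'(S)\|<\infty$.

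This estimate is the analytic heart of the matter and the step I expect to be the main obstacle. Because $Xp'(S)$ is the product of the Hankel operator $X$ with an analytic Toeplitz operator, it is again a Hankel operator $\Gamma_{\Phi p'}$, where $\Phi$ is the operator-valued symbol of $X$ with coefficients $a_kW_k$. The idea is to reduce this vector-valued Hankel norm to a scalar one: by the orthogonality property $\|\sum\alpha_iW_i\|=(\sum|\alpha_i|^2)^{1/2}$, the norm of $\Gamma_{\Phi p'}$ is dominated by the norm of the scalar Hankel operator whose symbol has Fourier coefficients $a_k$, tested against $p'$. The resulting scalar quantity is uniformly bounded over $\|p\|_\infty\le1$ precisely because the coefficients obey the weighted summability $\sum_k k\,a_k^2=\sum_k k(k+1)^{-3}<\infty$, which is the Dirichlet-type sufficient condition for polynomial boundedness of Foguel--Hankel operators in \cite{Davidson-Paulsen, Pisier}. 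The exponent $3/2$ is critical: it is the slowest decay for which this weighted $\ell^2$-sum still converges.

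For the failure of similarity I would invoke Paulsen's theorem and produce matrix-valued polynomials violating complete polynomial boundedness. Guided by the block formula, whose matricial off-diagonal corner is $\sum_j\beta_j\otimes(j+1)XS^{j}$ for $P(z)=\sum_j\beta_j z^{j+1}$, I would take $\beta_j=w_jC_{j,n}^*$ with weights $w_j=(j+1)^{-1/2}$. Two estimates then collide. On one hand, the square-zero and anticommutation relations satisfied by the $C_{j,n}$ give $\|P_n\|_\infty\le(\sum_{j<n}|w_j|^2)^{1/2}\approx(\log n)^{1/2}$. On the other hand, compressing the off-diagonal corner of $P_n(F)$ to the first coordinate in both source and target produces $\sum_{j<n}w_j(j+1)a_j\,C_{j,n}^*\otimes W_j$, whose norm is bounded below, by (the adjoint form of) the key estimate $\tfrac12\sum|c_i|\le\|\sum c_iC_{i,n}\otimes W_i\|$, by $\tfrac12\sum_{j<n}(j+1)^{-1/2}(j+1)(j+1)^{-3/2}=\tfrac12\sum_{j<n}(j+1)^{-1}\approx\tfrac12\log n$. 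Therefore $\|P_n(F)\|/\|P_n\|_\infty\gtrsim(\log n)^{1/2}\to\infty$, so $F$ is not completely polynomially bounded and hence, by Paulsen's criterion, not similar to a contraction. The same critical exponent $3/2$ that made the scalar sum in the previous paragraph converge makes this matricial sum diverge logarithmically, which is exactly the gap that realizes a polynomially bounded operator that is not similar to a contraction.
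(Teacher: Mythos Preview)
The paper does not actually prove this theorem; it is quoted verbatim as a special case of \cite[Theorem 3.1]{Davidson-Paulsen}. Your sketch correctly reproduces the Davidson--Paulsen argument: the block formula $p(F)=\begin{bmatrix}p(S^*)&Xp'(S)\\0&p(S)\end{bmatrix}$, reduction of polynomial boundedness to the weighted $\ell^2$-condition $\sum_k k\,a_k^2<\infty$, and the failure of complete polynomial boundedness via matrix-coefficient test polynomials built from the $C_{j,n}$. This is exactly the template the paper itself adapts in its Step~2 to handle $F^m$.

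One small point worth cleaning up: you take the coefficients $\beta_j=w_jC_{j,n}^*$, so the compressed off-diagonal corner is $\sum_j c_j\,C_{j,n}^*\otimes W_j$, and you then invoke ``the adjoint form of'' the lower bound $\tfrac12\sum|c_i|\le\|\sum c_iC_{i,n}\otimes W_i\|$. That adjoint form is not literally what is recorded above and needs a short symmetry argument. The paper (in Step~2, for $m=1$) sidesteps this by taking the coefficients to be $C_{j,n}$ rather than $C_{j,n}^*$; then the stated inequality~(ii) applies directly. With that cosmetic change your argument is the same as the cited one.
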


Define a linear map $\delta : \bC[z] \rightarrow \cB(\ell^2(\cH))$ by setting $\delta(z^k)=X_k = kXS^{k-1}$ and extending linearly. Since $X$ is a Hankel matrix, we have $\delta( f)=X f'(S)$ where $f'$ is the derivative of $f$.

\textbf{Step 2: $F^m$ is polynomially bounded but not similar to a contraction for $m \in \mathbb{N}$:}

It is obvious that $F^m$ is polynomially bounded. So, it remains to prove that $F^m$ is not similar to a contraction. We modify the proof of Theorem \ref{Davidson-Paulsen-exp}. If possible, suppose that 
$$
F^m = \begin{bmatrix}
S^{*m} & X_m \\
0 & S^m
\end{bmatrix}
$$
is similar to a contractions, where $X_{m}$ is the Hankel operator $m(a_{i+j+m-1} W_{i+j+m-1})= \delta(z^{m})$ and $X_0=0$. Therefore, $F^m$ is completely polynomially bounded. It is easy to check that $F^m$ is completely bounded implies that $\delta$ is completely bounded on the space of polynomials in $z^m$, $\bC[z^m]= \operatorname{span}_\bC \{1, z^m, z^{2m},\dots\}$. Note that $(0,0)$ entry of $\delta(z^{km+1})= (km+1) a_{km} W_{km}$. Consider the linear map $\delta_{m,0} : \bC[z^m] \rightarrow \cB(\cH)$ such that $\delta_{m,0}(1)=0$, $\delta(z^{km+1})= (km+1) a_{km} W_{km}$ for $k \geq 1$. Since $\delta$ is completely bounded on $\bC[z^m]$, so is $\delta_{m,0}$. 

We shall show that $\delta_{m,0}$ cannot be completely bounded. To that end, for each $n \geq 1$, consider the $2^{mn} \times 2^{mn}$ matrix-valued polynomial in $z^m$, $p(z) = \sum_{k=1}^n km\ \bar{a}_{km-1} C_{km-1, mn} z^{mk}$. By $\delta_{m,0}^{(2^{mn})}$, we denote the map from $M_{2^{mn}}(\bC)\otimes \bC[z^m]$ to $ M_{2^{mn}}(\bC)\otimes \cB(\cH)$ corresponding to $\delta_{m,0}$. Then
\begin{align*}
\delta_{m,0}^{(2^{mn})} (P) = \sum_{k=1}^n (km)^2|\bar{a}_{km-1}|^2 C_{km-1, mn} \otimes W_{km-1}.
\end{align*}
So,
\begin{align*}
\|\delta_{m,0}^{(2^{mn})} (P) \|= \left\| \sum_{i=0}^{mn-1} \alpha_i C_{i, mn} \otimes W_i \right\|
\end{align*}
where $\alpha_{km-1}= (km)^2 |a_{km-1}|^2 $ for $k=1,\dots,n$ and $\alpha_i =0 $ otherwise. Therefore,
\begin{align*}
\|\delta_{m,0}^{(2^{mn})} (P) \| \geq \frac{1}{2} \sum_{i=0}^{mn-1} |\alpha_i|= \frac{1}{2} \sum_{k=1}^n (km)^2|\bar{a}_{km-1}|^2.
\end{align*}
Again, $\|p\|_\infty = \sup_{|z|=1} \|p(z)\|$. Using the properties of $C_i$ one can show that, the map $\Lambda : \bC^{mn} \rightarrow \cB(\cH)$ given by 
\begin{align*}
\Lambda(\alpha_0, \dots, \alpha_{mn-1})= \sum_{i=0}^{mn-1} \alpha_i C_i
\end{align*}
is an isometry. For any $|z|=1$, we choose $\alpha_{km-1}= km \ \bar{a}_{km-1} z^{km} $ for $k=1, \dots, n$ and $\alpha_i =0 $ otherwise. Then,
$$
\|p(z)\|^2 = \|\Lambda(\alpha_0, \dots, \alpha_{mn-1})\|^2 = \sum_{k=1}^n (km)^2 |a_{km-1}|^2.
$$
Hence, 
\begin{align*}
\frac{\|\delta_{m,0}^{(2^{mn})} (p) \|}{\|p\|_\infty} \geq \frac{1}{2} \left( \sum_{k=1}^n (km)^2 |a_{km-1}|^2 \right)^{\frac{1}{2}} = \frac{1}{2} \left( \frac{1}{m} \sum_{k=1}^n \frac{1}{k} \right)^{\frac{1}{2}}.
\end{align*}

Since $\sum_{k=1}^n \frac{1}{k}$ goes to $+\infty$ as $n\to +\infty$, the above inequality shows that the $\delta_{m,0}$ is not completely bounded which is a contradiction. Hence $F^m$ can not be similar to a contraction.

\textbf{Step 3: $\mathfrak C \subsetneq \mathfrak H$:}

Let $T$ be a polynomially bounded operator acting on $\cH$ such that $T^m$ is not similar to a contraction for each $m\in \mathbb{N}$. Define the $d$-tuple of commuting operators, $\underline{T}= (T_1 \dots, T_d)$ by $T_j = \theta_j(T, T\dots, T)$ for every $j$. Let $g \in \bC[\boldsymbol{p}]$  such that $g(\boldsymbol{p})= \sum_{\boldsymbol{\alpha} \in \mathbb{Z}_{+}^d} a_{\boldsymbol{\alpha}} \boldsymbol{p}^{\boldsymbol{\alpha}}$ where $a_{\boldsymbol{\alpha}}$'s are zero except for finitely many $\boldsymbol{\alpha}$. Then

 \begin{align*}
 g(\underline{T}) &= \sum_{\boldsymbol{\alpha} \in \mathbb{Z}_{+}^d} a_{\boldsymbol{\alpha}} \underline{T}^{\boldsymbol{\alpha}} 
                  = \sum_{\boldsymbol{\alpha} \in \mathbb{Z}_{+}^d} a_{\boldsymbol{\alpha}} T_1^{\alpha_1} \dots T_d ^{\alpha_d} = r(T) 
 \end{align*}
where $r(z)=\sum_{\boldsymbol{\alpha} \in \mathbb{Z}_{+}^d}  a_{\boldsymbol{\alpha}} \theta(z,\dots, z)^{\boldsymbol{\alpha}} \in \mathbb{C}[z]$. Since $T$ being polynomially bounded, there is a fixed constant $c>0$ (independent of $r$) such that $\|g(\underline{T})\|=\|r(T)\| \leq c \|r\|_{\infty, \overline{\mathbb{D}}} $. But 
$$ \|r\|_{\infty, \overline{\mathbb{D}}} = \sup_{z\in \overline{\mathbb{D}}} |r(z)| = \sup_{z\in \overline{\mathbb{D}}} |g\circ \theta (z, \dots , z)| \leq \|g\|_{\infty, \overline{\theta(\bD^d)}}. $$
Therefore $\|g(\underline{T}) \| \leq c \|g\|_{\infty, \overline{\theta(\bD^d)} }$. Hence we can consider the $\cA(\theta(\bD^d))$ Hilbert module structure on $\mathcal{H}$ with the module action given by,  
$$g \cdot h = g(\underline{T})h $$
for all $h\in \mathcal{H}$ and $g \in \cA(\theta(\bD^d))$. 
So, $\mathcal{H}$ is an object in the category $\mathfrak{H}$. We show that this is not an object in $\mathfrak{C}$. Towards that end, assume that the Hilbert module $\mathcal{H}$ as above is cramped i.e., there exists $X: \mathcal{H} \to \mathcal{K}$ invertible bounded module map such that $\mathcal{H}$ and $\mathcal{K}$ are similar via $X$ for some contractive Hilbert module $\mathcal{K}$. By our construction, the action of $p_j$ on $\mathcal{H}$ is $T_j$ for $j=1,\dots, d$. Suppose for each $j$, $T_{p_j}$ denote the action of $p_j$ on $\mathcal{K}$. Then $\left(T_{p_1},\dots, T_{p_d} \right)$ is a $\overline{\theta(\bD^d)}$-contraction. Therefore $\|T_{p_1}\|\leq d $. Also by the similarity $X$, we have $T_1= X^{-1} T_{p_1} X$. So, $\| X T_1 X^{-1}\| \leq d$ i.e., $d \|XTX^{-1}\| \leq d$ as $T_1=d T^m$. This implies $T^m$ is similar to a contraction which is a contradiction. Hence the Hilbert module $\mathcal{H}$ is not an object in $\mathfrak{C}$.
\qed

\section{The Hardy space is not a projective Hilbert module}\label{Application}
 
\begin{thm}\label{Thm:NonProj}
Consider the quotient domain $\theta(\bD^d)$ obtained by the action of the pseudo-reflection group $G=G(m,t,d)$. For any $\rho \in \widehat{G}_1$, the Hilbert module $H_\rho ^2(\theta(\bD^d))$ is not projective in $\mathfrak{H}$ as well as in $\mathfrak{C}$.
\end{thm}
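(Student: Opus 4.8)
The plan is to produce one explicit short exact sequence of Hilbert modules
$$0 \longrightarrow H_\rho^2(\theta(\bD^d)) \longrightarrow \cK \longrightarrow H_\rho^2(\theta(\bD^d)) \longrightarrow 0$$
whose objects lie in $\mathfrak{C}$ (hence in $\mathfrak{H}$) and which does not split, tracing the obstruction to splitting back to the failure of Nehari's theorem in \cref{Failure-Nehari}. The module $\cK$ will be the pullback, along the big Hankel operator $H_{\tilde\Phi}$, of the tautological sequence attached to the unitary module $L^2(\partial\theta(\bD^d),\mu_{\rho,\theta})$, where $\tilde\Phi$ is the symbol produced there.

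\textbf{Step 1: the tautological sequence and the Hankel module map.} Write $L^2=L^2(\partial\theta(\bD^d),\mu_{\rho,\theta})$, $P=P_{H_\rho^2(\theta(\bD^d))}$, and $Q=I-P$. Equip $\operatorname{ran}Q=H_\rho^2(\theta(\bD^d))^\perp$ with the compressed action $A_j:=QM_{p_j}Q$. Since $M_{p_j}$ leaves $H_\rho^2(\theta(\bD^d))$ invariant, $Q(p_j x)=A_j(Qx)$ for $x\in L^2$, so $Q\colon L^2\to\operatorname{ran}Q$ is a surjective module map with kernel $H_\rho^2(\theta(\bD^d))$, giving the exact sequence
$$0 \longrightarrow H_\rho^2(\theta(\bD^d)) \xrightarrow{\ \iota\ } L^2 \xrightarrow{\ Q\ } \operatorname{ran}Q \longrightarrow 0$$
with all objects contractive, hence in $\mathfrak{C}$. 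The same invariance computation yields the intertwining $A_j H_{\tilde\Phi}=H_{\tilde\Phi}T_{p_j}$, so that $H_{\tilde\Phi}\colon H_\rho^2(\theta(\bD^d))\to\operatorname{ran}Q$ is a module map, and it is bounded by \cref{Failure-Nehari}.

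\textbf{Step 2: the pullback and its exactness in $\mathfrak{C}$.} Set
$$\cK=\bigl\{(x,v)\in L^2\oplus H_\rho^2(\theta(\bD^d)):Qx=H_{\tilde\Phi}v\bigr\},$$
a closed subspace of the contractive module $L^2\oplus H_\rho^2(\theta(\bD^d))$. Using $Q(p_j x)=A_j(Qx)$ together with the intertwining of $H_{\tilde\Phi}$, the coordinate action $(x,v)\mapsto(p_j x,p_j v)$ preserves $\cK$, so $\cK$ is a submodule of a contractive module and is therefore itself contractive, hence an object of $\mathfrak{C}$. The maps $v'\mapsto(v',0)$ and $(x,v)\mapsto v$ then present $0\to H_\rho^2(\theta(\bD^d))\to\cK\to H_\rho^2(\theta(\bD^d))\to 0$ as a short exact sequence in $\mathfrak{C}\subseteq\mathfrak{H}$.

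\textbf{Step 3: splitting forces a Nehari correction.} A module splitting is a bounded module map $s\colon H_\rho^2(\theta(\bD^d))\to\cK$ with $s(v)=(Sv,v)$, and membership $(Sv,v)\in\cK$ says precisely that $S\colon H_\rho^2(\theta(\bD^d))\to L^2$ is a bounded module map with $QS=H_{\tilde\Phi}$. Any bounded module map $S$ of this kind is multiplication by $\psi:=S(1)$, and the key analytic claim is that $\psi\in L^\infty(\partial\theta(\bD^d),\mu_{\rho,\theta})$; transported through $\Gamma_\rho$ and $\Gamma_\rho^\prime$ to $\bT^d$ this becomes the assertion that a bounded multiplier of $R_\rho^G(H^2(\bD^d))$ into $R_\rho^G(L^2(\bT^d))$ has an essentially bounded $G$-invariant symbol, which one obtains from an outer/Szeg\H{o}-type factorization pushed down from the polydisc. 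Granting this, $QS=H_{\tilde\Phi}$ reads $QM_{\psi-\tilde\Phi}|_{H_\rho^2(\theta(\bD^d))}=0$, that is $(\psi-\tilde\Phi)H_\rho^2(\theta(\bD^d))\subseteq H_\rho^2(\theta(\bD^d))$; evaluating on the constant $1\in H_\rho^2(\theta(\bD^d))$ gives $g:=\psi-\tilde\Phi\in H_\rho^2(\theta(\bD^d))$, the boundary value of a holomorphic function on $\theta(\bD^d)$. Then $\tilde\Phi+g=\psi\in L^\infty(\partial\theta(\bD^d),\mu_{\rho,\theta})$, contradicting \cref{Failure-Nehari}. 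Hence the sequence does not split, and $H_\rho^2(\theta(\bD^d))$ is projective in neither $\mathfrak{C}$ nor $\mathfrak{H}$.

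\textbf{Main obstacle.} I expect the only genuinely delicate point to be the identification in Step 3 of bounded module maps $H_\rho^2(\theta(\bD^d))\to L^2$ with multiplications by $L^\infty$ symbols, since this is where the function-theoretic input (a factorization on the quotient domain, carried over from $\bT^d$ through $\Gamma_\rho$, $\Gamma_\rho^\prime$ and the measure $\mu_{\rho,\theta}$) is needed; the remainder is the formal homological bookkeeping of the pullback together with the two intertwining identities of Step 1.
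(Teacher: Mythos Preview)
Your strategy is essentially the paper's own: the paper applies the long exact sequence of \cref{long-exact} to the short exact sequence $0\to H_\rho^2\to L^2\to L^2/H_\rho^2\to 0$ and argues that if $\operatorname{Ext}(H_\rho^2,H_\rho^2)=0$ then the induced map $\pi_*\colon \operatorname{Hom}(H_\rho^2,L^2)\to\operatorname{Hom}(H_\rho^2,L^2/H_\rho^2)$ is onto. Since the connecting homomorphism $\delta$ in that long exact sequence is \emph{defined} by pullback, your explicit pullback $\cK$ along $H_{\tilde\Phi}$ is precisely the extension $\delta(H_{\tilde\Phi})$, and your ``splitting $\Rightarrow$ lift $S$'' is exactly the statement that $H_{\tilde\Phi}\in\operatorname{im}\pi_*$. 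So Steps~1--2 and the homological part of Step~3 match the paper; your version has the virtue of making the non-splitting sequence explicit and of showing by hand that $\cK$ is contractive (hence in $\mathfrak C$).

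The genuine gap is where you say it is: the claim in Step~3 that a bounded module map $S\colon H_\rho^2\to L^2$ is $M_\psi$ with $\psi\in L^\infty$. Your suggested route, an ``outer/Szeg\H{o}-type factorization pushed down from the polydisc,'' is unlikely to succeed, because there is no workable inner--outer factorization on $\bD^d$ for $d\ge 2$ that would let you promote a bounded multiplier of $H^2$ (or of $R_\rho^G H^2$) into $L^2$ to an $L^\infty$ symbol. The paper instead proves this identification as \cref{Hom-lemma1}, using two ingredients: (i) the Brown--Halmos characterization \cref{Brown-Halmos} to upgrade the Toeplitz compression $P_{H_\rho^2}S=T_\psi$ to some $T_\varphi$ with $\varphi\in L^\infty$, and (ii) the special orthonormal basis $\{t_\lambda\}$ together with the fact that $p_d^N t_\lambda\in H_\rho^2$ for $N$ large, which lets one test against \emph{all} $t_\lambda$ (not just those in $\Lambda_+$) and conclude $\psi=\varphi$ and $S=M_\psi$. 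Once you plug this in for your ``Granting this,'' your argument is complete and coincides with the paper's.
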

The proof will require several intermediate steps. Expectedly, an appropriate notion of Toeplitz operators will pop up. But first we need a suitable orthonormal basis for $R_\rho ^G H^2(\bD^d)$. 

Recall from \eqref{P-rho} the orthogonal projection $\mathbb{P}_\rho : L^2(\bT^d) \to L^2( \bT^d )$ defined as
\begin{align*}
\mathbb{P}_\rho (\psi) = \frac{1}{|G|} \sum_{\sigma \in G} \rho(\sigma^{-1}) \sigma(\psi), \text{ for } \psi \in L^2(\bT^d).
\end{align*}
We denote the independent variables on $\bT^d$ by $\boldsymbol{\zeta} = (\zeta_1, \dots, \zeta_d)$ and their conjugates by $\overline{\boldsymbol{\zeta}}= (\bar{\zeta}_1, \dots, \bar{\zeta}_d)$.
So, the set
\begin{align}\label{TheOrthoSet}
\left\lbrace \mathbb{P}_\rho (\boldsymbol{\zeta}^{\boldsymbol{\alpha}}) : \boldsymbol{\alpha} \in \mathbb Z^d  \right \rbrace
\end{align}
spans $R_\rho ^G L^2(\bT^d) = \mathbb{P}_\rho (L^2(\bT^d))$. Using the structure of the elements of the group $G$ and the orthogonality of $\{\boldsymbol{\zeta}^{\boldsymbol{\alpha}} : \boldsymbol{\alpha} \in \mathbb{Z}^d\}$, one can verify that, if $\boldsymbol{\alpha}= (\alpha_1, \dots, \alpha_d)$ and $\boldsymbol{\beta}=(\beta_1,\dots, \beta_d)$ are such that $\boldsymbol{\alpha} \triangle \boldsymbol{\beta} \neq\emptyset$ (here $\triangle$ denotes the symmetric difference of $\boldsymbol{\alpha}$ and $\boldsymbol{\beta}$ considered as sets), then  
$$\langle \mathbb{P}_\rho (\boldsymbol{\zeta}^{\boldsymbol{\alpha}}), \mathbb{P}_\rho (\boldsymbol{\zeta}^{\boldsymbol{\beta}})\rangle =0.$$
If on the other hand, $\boldsymbol{\alpha} \triangle \boldsymbol{\beta}=\emptyset$, then $\boldsymbol{\beta}$ can be obtained from $\boldsymbol{\alpha}$ by a permutation. In this case, it follows from the definition of $\mathbb P_\rho$ that $\mathbb P_\rho (\boldsymbol{\zeta}^{\boldsymbol{\alpha}})$ is a (unimodular) constant multiple of $\mathbb P_\rho (\boldsymbol{\zeta}^{\boldsymbol{\beta}})$. Thus the set \eqref{TheOrthoSet} with two elements identified when one is a constant multiple of the other, is an orthogonal basis of $R_\rho ^G L^2(\bT^d)$. Each element of this orthogonal basis is a Laurent polynomial. Furthermore $P_{R_\rho ^G H^2(\bT^d)}\mathbb{P}_\rho (\boldsymbol{\zeta}^{\boldsymbol{\alpha}})$ is zero precisely when $\boldsymbol{\alpha}$ has a negative coordinate. This way we arrive at an orthonormal basis $\{ e_\lambda : \lambda \in \Lambda \subset \mathbb Z^d\}$ for $R_\rho ^G L^2(\bT^d)$ such that $\{ e_\lambda : \lambda \in\Lambda_+= \Lambda\cap\mathbb Z_+^d\}$ is an orthonormal basis for $R_\rho ^G H^2(\bT^d)$. Note that one advantage (which will be used) of the basis elements being Laurent polynomials is that given a $\lambda\in\Lambda$, there is an $N$ large enough so that $\theta_d^Ne_\lambda$ belongs to $R_\rho ^G H^2(\bT^d)$. We shall denote the avatar of this basis for the space $L^2(\partial\theta(\bD^d),\mu_{\rho, \theta})$ via the unitary $\Gamma_\rho'$ by $\{t_\lambda:=\Gamma_\rho'^* e_\lambda:\lambda\in\Lambda\}$.


\begin{definition}\label{D:Toeplitz}
Let $m,t,d$ with $t$ dividing $m$,  the group $G = G(m,t,d)$ and the basic polynomial $\theta$ be as above. 
For $\rho \in \widehat{G}_1$ and a symbol $\varphi \in L^2(\partial \theta(\bD^d), \mu_{\rho, \theta})$, define the linear transformation $T_{\varphi}$ on $\bC[\boldsymbol{p}]$, the space of holomorphic polynomials in variables $p_1,p_2,\dots,p_d$ (the coordinate functions in $\theta(\bD^d)$) as
\begin{align*}
T_\varphi (g)= \operatorname{P}_{H_\rho^2(\theta(\bD^d))} (\varphi g), \text{ for all } g \mbox{ in } \bC[\boldsymbol{p}]. 
\end{align*}
If $T_\varphi$ defines a bounded linear operator on $\bC[\boldsymbol{p}]$, then it extends uniquely to a bounded operator on $H_\rho^2(\theta(\bD^d))$. In that case we say that $T_\varphi$ is a bounded \textit{Toeplitz operator}.
\end{definition}
Examples include the coordinate multipliers $T_{p_j}$ on $H_\rho^2 (\theta(\bD^d))$. Note that the tuple $(T_{p_1},T_{p_2}, \dots, T_{p_d})$ is a $\overline{\theta(\bD^d)}$-isometry because it is the restriction of the $\overline{\theta(\bD^d)}$-unitary $(M_{p_1},M_{p_2}, \dots, M_{p_d})$ acting on $L^2 (\partial \theta(\bD^d), \mu_{\rho, \theta})$. It is noteworthy that $T_{p_d}$ and hence $T_{\theta_d}$, (avatar of $T_{p_d}$ on the space $R_\rho^G H^2(\bD^d)$ via the unitary $\Gamma_\rho$) are pure isometries. Indeed, using the form \eqref{RudinThetas} of $\theta_d$ we see that for $f, g \in R_\rho^G H^2(\bD^d)$,
\begin{align*}
\left \langle T_{\theta_d}^{*n} f, g  \right\rangle &= \left \langle f, {\theta_d}^{n} g  \right\rangle = \left \langle f, (z_1\dots z_d)^{\frac{mn}{t}} g  \right\rangle = \left \langle (\overline{z_1\dots z_d})^{\frac{mn}{t}} f,  g  \right\rangle.                                               
\end{align*}Since the operator $T_{(z_1\dots z_d)^{\frac{m}{t}}}$ is a pure isometry on $H^2(\bD^d)$ , $\| T_{\theta_d}^{*n} f\| \rightarrow 0$ as $n\to \infty$. A further consequence of the form of the coordinate function $\theta_d$ is that if $(T_1,\dots, T_d)$ is any $\overline{\theta(\bD^d)}$-unitary acting on Hilbert space $\mathcal{H}$, then $T_d$ is unitary. This is because $T_d$ is a normal operator (by definition) and its spectrum $\sigma(T_d)$ is contained in $\{\theta_d(\boldsymbol{z}): \boldsymbol{z} \in \bT^d \}$. But $\theta_d(\boldsymbol{z})= (z_1\dots z_d)^{\frac{m}{t}} \in \bT$ whenever $\boldsymbol{z}\in \bT^d$. These observations will be used later in this section.

  Inspired by the recent work \cite{BDSIMRN} and the classic \cite{BrownHalmos}, the characterizing property of those symbols $\varphi \in L^2(\partial \theta(\bD^d), \mu_{\rho, \theta})$ that define bounded Toeplitz operators is called the \textit{Brown-Halmos relations}. The proof is omitted because it appeared recently in \cite{GSR}.

\begin{thm} \label{Brown-Halmos}
For a symbol $\varphi$ in $L^2(\partial \theta(\bD^d), \mu_{\rho, \theta})$ if $T_\varphi$ is a bounded Toeplitz operator, then the following relations hold:
\begin{align}\label{B-H-relation}
T_{p_{d-j}}^* T_\varphi T_{p_d}^t = T_\varphi T_{p_j} \text{ for } 1\leq j < d \ \text{ and } T_{p_d}^* T_\varphi T_{p_d} = T_\varphi.
\end{align}
(Here $t$ is the parameter corresponding to the group $G(m,t,d)$.) Conversely, if $T_\varphi$ defines a bounded Toeplitz operator on $H_\rho^2(\theta(\bD^d))$ for a symbol $\varphi$ in $L^2(\partial \theta(\bD^d), \mu_{\rho, \theta})$, then there exists a $\psi \in L^\infty (\partial \theta(\bD^d), \mu_{\rho, \theta})$ such that $T_\varphi = P_{H_\rho^2(\theta(\bD^d))} M_\psi|_{H_\rho^2(\theta(\bD^d))}$.
\end{thm}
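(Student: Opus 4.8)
The plan is to prove the two assertions separately: the forward implication is a direct computation resting on two pointwise identities on the \v Silov boundary, while the converse is a Brown--Halmos type dilation argument that reconstructs the bounded symbol $\psi$ from the operator. For the \emph{forward direction}, write $w_k=z_k^m$, so that $\theta_i=E_i(w_1,\dots,w_d)$ for $i<d$ and $\theta_d^t=E_d(w)=w_1\cdots w_d$, where $E_i$ are the elementary symmetric polynomials. On $\bT^d$ one has $\bar w_k=w_k^{-1}$, and the classical duality $E_i(w^{-1})=E_{d-i}(w)/E_d(w)$ yields
$$\theta_d^t\,\overline{\theta_{d-j}}=\theta_j \quad (1\le j\le d-1), \qquad |\theta_d|=1.$$
Pushing these forward by $\theta$ gives $p_d^t\,\overline{p_{d-j}}=p_j$ and $|p_d|=1$ on $\partial\theta(\bD^d)$. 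For holomorphic polynomials $g,h$, using that $p_{d-j}h,\,p_dh\in H_\rho^2(\theta(\bD^d))$ so that the Szeg\"o projection $P$ may be dropped, one computes
$$\langle T_{p_{d-j}}^*T_\varphi T_{p_d}^t g,h\rangle=\int_{\partial\theta(\bD^d)}\varphi\,p_d^t\,\overline{p_{d-j}}\,g\,\bar h\,d\mu_{\rho,\theta}=\int_{\partial\theta(\bD^d)}\varphi\,p_j\,g\,\bar h\,d\mu_{\rho,\theta}=\langle T_\varphi T_{p_j}g,h\rangle,$$
and likewise $\langle T_{p_d}^*T_\varphi T_{p_d}g,h\rangle=\int\varphi|p_d|^2 g\bar h\,d\mu_{\rho,\theta}=\langle T_\varphi g,h\rangle$; density of the polynomials in $H_\rho^2(\theta(\bD^d))$ completes this half.

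\emph{Converse direction.} Let $U=M_{p_d}$, a unitary on $L^2(\partial\theta(\bD^d),\mu_{\rho,\theta})$ whose restriction to $H_\rho^2(\theta(\bD^d))$ is the pure isometry $T_{p_d}$, and let $P$ be the Szeg\"o projection. Put $B_n=U^{*n}T_\varphi P\,U^n$, so that $\|B_n\|\le\|T_\varphi\|$. The structural fact recorded above, that each basis vector satisfies $p_d^N t_\lambda\in H_\rho^2(\theta(\bD^d))$ for $N$ large, is exactly what makes the matrix entries stabilise: for $n$ large, $P\,U^n t_\lambda=p_d^n t_\lambda$, and iterating $T_{p_d}^*T_\varphi T_{p_d}=T_\varphi$ shows $\langle B_n t_\lambda,t_\mu\rangle$ is independent of $n$. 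With the uniform norm bound, this produces a weak-operator limit $B=\operatorname{WOT}\text{-}\lim B_n$ with $\|B\|\le\|T_\varphi\|$. Since $U^*B_nU=B_{n+1}$, passing to the limit gives $U^*BU=B$, i.e.\ $B$ commutes with $M_{p_d}$.

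\emph{From commutation to a multiplication operator.} The crux is to promote this to commutation with every $M_{p_j}$. Feeding $p_d^{\,n-t}t_\lambda$ into the first Brown--Halmos relation, using the boundary identity $\overline{p_j}=p_d^{-t}p_{d-j}$, and shifting exponents once more via $T_{p_d}^*T_\varphi T_{p_d}=T_\varphi$, one verifies that for large $n$
$$\langle B_nM_{p_j}t_\lambda,t_\mu\rangle=\langle T_\varphi(p_j p_d^n t_\lambda),p_d^n t_\mu\rangle=\langle T_\varphi(p_d^n t_\lambda),p_{d-j}p_d^{\,n-t}t_\mu\rangle=\langle M_{p_j}B_n t_\lambda,t_\mu\rangle,$$
so $B$ commutes with all of $M_{p_1},\dots,M_{p_d}$. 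Because $\overline{p_j}=p_d^{-t}p_{d-j}$ on the boundary, the von Neumann algebra generated by $\{M_{p_j}\}$ contains every $M_{\overline{p_j}}$, hence all $M_h$ with $h\in C(\partial\theta(\bD^d))$ (the $p_j$ separate the $G$-orbits, i.e.\ the points of $\partial\theta(\bD^d)$, so Stone--Weierstrass applies), and therefore coincides with the maximal abelian algebra $\{M_\psi:\psi\in L^\infty(\partial\theta(\bD^d),\mu_{\rho,\theta})\}$. As $B$ lies in its commutant, $B=M_\psi$ for some $\psi\in L^\infty$ with $\|\psi\|_\infty=\|B\|\le\|T_\varphi\|$; comparing entries for indices in $\Lambda_+$ gives $T_\varphi=P\,M_\psi|_{H_\rho^2(\theta(\bD^d))}$.

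\emph{Main obstacle.} The forward half is routine once the arithmetic identity $\theta_d^t\overline{\theta_{d-j}}=\theta_j$ is in hand. The real difficulty is the converse, and within it the step showing that $B$ is a multiplication operator: commutation with $M_{p_d}$ is automatic from $U^*B_nU=B_{n+1}$, but commutation with the remaining $M_{p_j}$ genuinely needs the full system of Brown--Halmos relations coupled with $\overline{p_j}=p_d^{-t}p_{d-j}$, and the final identification relies on $\{M_{p_j}\}$ generating a maximal abelian algebra. The technical heart throughout is controlling the interaction of $P$ with the $M_{p_j}$, which is sidestepped by working in the large-$n$ regime where $p_d^n t_\lambda$ genuinely belongs to $H_\rho^2(\theta(\bD^d))$.
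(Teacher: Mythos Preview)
The paper does not actually supply a proof of this theorem: it explicitly omits it, citing the recent preprint \cite{GSR}. So there is no in-paper argument to compare against. Your proof is correct and follows the classical Brown--Halmos paradigm (as in \cite{BrownHalmos} and its symmetrized bidisc analogue \cite{BDSIMRN}): the forward direction rests on the key boundary identity $p_d^t\,\overline{p_{d-j}}=p_j$ (which you derive cleanly from $E_i(w^{-1})=E_{d-i}(w)/E_d(w)$), and the converse dilates $T_\varphi$ along the pure isometry $T_{p_d}$, exploits the stabilisation of matrix entries forced by $T_{p_d}^*T_\varphi T_{p_d}=T_\varphi$, and then identifies the WOT limit as a multiplication operator via the maximal abelian von Neumann algebra generated by $\{M_{p_j}\}$. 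Your identification of the main obstacle---promoting commutation with $M_{p_d}$ to commutation with all $M_{p_j}$ using the remaining Brown--Halmos relations together with $\overline{p_j}=p_d^{-t}p_{d-j}$---is exactly right, and your handling of it is sound.

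One cosmetic remark: in the displayed chain for step~4 you suppress the intermediate step $\langle T_\varphi(p_d^{n+t}t_\lambda),\,p_{d-j}p_d^{\,n}t_\mu\rangle$ obtained directly from the first relation, before invoking the second relation $t$ times to shift the exponent down. Writing that intermediate line would make the passage more transparent to a reader. Otherwise nothing is missing.
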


Let $X$ and $Y$ be two Hilbert modules over $\mathcal A(\theta(\bD^d))$. We shall denote by $\operatorname{Hom}(X,Y)$ the group of all module maps from $X$ into $Y$. \cref{Brown-Halmos} allows us to identify certain homomorphism groups as described in the results below.

\begin{thm}\label{Hom-lemma1}
Consider the Hilbert modules $H_\rho ^2(\theta(\bD^d))$ and $ L^2(\partial \theta(\bD^d),\mu_{\rho, \theta})$ over $\cA(\theta(\bD^d))$. For every module map 
$T$ from $H_\rho ^2(\theta(\bD^d))$ into $L^2(\partial \theta(\bD^d), \mu_{\rho, \theta})$, there exists a $\psi$ in the algebra $L^\infty \left(\partial \theta(\bD^d),\mu_{\rho, \theta} \right)$ such that $T= M_\psi|_{H_\rho ^2(\theta(\bD^d))}$, i.e., 
$$
\operatorname{Hom}\left(H_\rho ^2(\theta(\bD^d)), L^2(\partial \theta(\bD^d), \mu_{\rho, \theta}) \right) = L^\infty \left(\partial \theta(\bD^d),\mu_{\rho, \theta}\right).
$$
\end{thm}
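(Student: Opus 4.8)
The plan is to prove the two inclusions $L^\infty(\partial\theta(\bD^d),\mu_{\rho,\theta}) \subseteq \operatorname{Hom}\big(H_\rho^2(\theta(\bD^d)),L^2(\partial\theta(\bD^d),\mu_{\rho,\theta})\big)$ and its reverse, with the correspondence given by $\psi\leftrightarrow M_\psi|_{H_\rho^2(\theta(\bD^d))}$. The forward inclusion is routine: for $\psi\in L^\infty$ the operator $M_\psi|_{H_\rho^2(\theta(\bD^d))}$ is bounded by $\|\psi\|_\infty$ and, since multiplication operators commute, $M_\psi(g\cdot h)=g\cdot M_\psi(h)$ for every $g\in\cA(\theta(\bD^d))$ and $h\in H_\rho^2(\theta(\bD^d))$, so it is a module map into $L^2(\partial\theta(\bD^d),\mu_{\rho,\theta})$.

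For the reverse inclusion, given a module map $T$, I would set $\psi:=T(1)$, noting that $1\in H_\rho^2(\theta(\bD^d))$ because $\ell_\rho\,(1\circ\theta)=\ell_\rho\in R_\rho^G H^2(\bD^d)$. Applying the module property with $f=g$ a holomorphic polynomial and $h=1$ gives $T(g)=g\cdot T(1)=\psi g$, the product being the pointwise one in $L^2$. Density of $\bC[\boldsymbol p]$ in $H_\rho^2(\theta(\bD^d))$ together with boundedness of $T$ then upgrades this, via a routine subsequence-a.e.\ argument, to $T(h)=\psi h$ for all $h\in H_\rho^2(\theta(\bD^d))$; thus $T=M_\psi|_{H_\rho^2(\theta(\bD^d))}$, provided we can show $\psi\in L^\infty$.

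The crux is promoting $\psi\in L^2$ to $\psi\in L^\infty$, and this is exactly where the special form of $\theta_d$ enters. Since $M_{p_d}$ is a \emph{unitary} on $L^2(\partial\theta(\bD^d),\mu_{\rho,\theta})$ while $T_{p_d}$ is only a pure isometry on $H_\rho^2(\theta(\bD^d))$, and since the Laurent-polynomial basis $\{t_\lambda\}$ has the property (recorded just before \cref{D:Toeplitz}) that $p_d^N t_\lambda\in H_\rho^2(\theta(\bD^d))$ for $N$ large, the increasing union $D:=\bigcup_{N\ge0}p_d^{-N}H_\rho^2(\theta(\bD^d))$ contains every $t_\lambda$ and is therefore dense in $L^2$. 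I would then define $\widetilde T$ on $D$ by $\widetilde T(p_d^{-N}h):=p_d^{-N}T(h)$ for $h\in H_\rho^2(\theta(\bD^d))$. Well-definedness follows from the module identity $T(p_d^{k}h)=p_d^{k}T(h)$ (valid because $p_d^{k}\in\cA(\theta(\bD^d))$), and unitarity of $M_{p_d}$ gives $\|\widetilde T(p_d^{-N}h)\|=\|T(h)\|\le\|T\|\,\|h\|=\|T\|\,\|p_d^{-N}h\|$, so $\widetilde T$ extends to a bounded operator on all of $L^2$ with $\|\widetilde T\|\le\|T\|$. On $D$ one has $\widetilde T(u)=\psi u$, and a final a.e.-limit argument identifies $\widetilde T$ with $M_\psi$ on $L^2$.

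It then remains to invoke the standard fact that a bounded multiplication operator $M_\psi$ on $L^2(\partial\theta(\bD^d),\mu_{\rho,\theta})$ forces $\psi\in L^\infty$: testing against the normalized indicator $\mathbbm 1_E/\sqrt{\mu_{\rho,\theta}(E)}$ of a set $E\subseteq\{|\psi|>\|M_\psi\|+\varepsilon\}$ of finite positive measure (available since $\mu_{\rho,\theta}$ is finite) would force $\|M_\psi\,u\|\ge\|M_\psi\|+\varepsilon$, a contradiction unless $\mu_{\rho,\theta}(\{|\psi|>\|M_\psi\|\})=0$. This yields $\psi\in L^\infty$ with $\|\psi\|_\infty\le\|T\|$ and completes the identification. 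I expect the main obstacle to be the density/extension step: the whole argument hinges on recognizing that $M_{p_d}$ is unitary on $L^2$ together with the Laurent-basis fact, and this is the one place where the proof genuinely uses the explicit shape $\theta_d=(z_1\cdots z_d)^{m/t}$ rather than generic behavior of the coordinate functions.
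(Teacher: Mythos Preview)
Your proof is correct and takes a genuinely different route from the paper. Both arguments set $\psi=T(\mathbbm 1)$, observe $T(g)=\psi g$ on polynomials, and exploit the unitarity of $M_{p_d}$ on $L^2$ together with the Laurent-basis fact $p_d^N t_\lambda\in H_\rho^2$. The divergence is in how $\psi\in L^\infty$ is obtained. The paper compresses $T$ to $H_\rho^2$ to get a bounded Toeplitz operator $T_\psi=P_{H_\rho^2}T$, then invokes the Brown--Halmos theorem (\cref{Brown-Halmos}, quoted from \cite{GSR}) to produce $\varphi\in L^\infty$ with $T_\psi=T_\varphi$, and finally uses the $p_d^N$-trick on inner products $\langle Tf,t_\lambda\rangle$ to identify $\varphi=\psi$ and $T=M_\psi$. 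You instead bypass Brown--Halmos entirely: you extend $T$ directly to a bounded operator $\widetilde T$ on all of $L^2$ via $\widetilde T(p_d^{-N}h)=p_d^{-N}T(h)$ on the dense subspace $\bigcup_N p_d^{-N}H_\rho^2$, identify $\widetilde T$ with pointwise multiplication by $\psi$ through an a.e.\ limit, and then appeal to the elementary fact that a bounded multiplication operator on $L^2$ of a finite measure space has an $L^\infty$ symbol. Your approach is more self-contained and avoids the external reference; the paper's approach has the advantage of tying the result into the Toeplitz-operator framework developed in the same section.
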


\begin{proof}
Let $\psi$ in $L^2(\partial \theta(\bD^d), \mu_{\rho, \theta})$ be such that $\psi = T(\mathbbm{1})$ where $\mathbbm{1}$ is the constant $1$ function. Since $T$ is a module map, for every polynomial $g \in \cA(\theta(\bD^d))$, we have $T(g)=\psi g$, and therefore
 \begin{align} \label{rec-eq}
P_{H_\rho^2(\theta(\bD^d))}T (f) = P_{H_\rho^2(\theta(\bD^d))}(\psi f) \quad \forall f \in H_\rho^2(\theta(\bD^d)).
\end{align}
This shows that $\psi$ defines a bounded Toeplitz operator $T_\psi$ and in fact $T_\psi=P_{H_\rho^2(\theta(\bD^d))}T$. Invoke Theorem \cref{Brown-Halmos} to get a $\varphi \in L^\infty (\partial \theta(\bD^d), \mu_{\rho, \theta})$ such that 
\begin{align}\label{KeyEqns}
  P_{H_\rho^2(\theta(\bD^d))}T =P_{H_\rho^2(\theta(\bD^d))}M_{\psi}|_{H_\rho^2(\theta(\bD^d))}= P_{H_\rho^2(\theta(\bD^d))}M_{\varphi}|_{H_\rho^2(\theta(\bD^d))}. 
\end{align} Recall that the orthonormal basis $\{t_\lambda = \Gamma_\rho'^* e_\lambda : \lambda \in \Lambda \}$ for $L^2(\partial \theta(\bD^d), \mu_{\rho, \theta})$. We shall show that $\varphi=\psi$ and $T=M_\psi|_{H_\rho^2(\theta(\bD^d))}$. From \eqref{KeyEqns} we have for every $f\in H_\rho^2(\theta(\bD^d))$ and $\lambda \in \Lambda_+$,
 \begin{align*}
\langle Tf,t_\lambda \rangle=\left \langle \psi f, t_\lambda \right \rangle = \left \langle \varphi f, t_\lambda \right \rangle.
\end{align*}All that remains to do is to establish the same equalities for every $\lambda$ in $\Lambda$. Fix $\lambda \in \Lambda$. There is an $N$ large enough so that $p_d^Nt_\lambda$ is in $H_\rho^2(\theta(\bD^d))$. Thus applying \eqref{KeyEqns} again we see that
 \begin{align*}
\langle Tp_d^Nf,p_d^Nt_\lambda \rangle=\left \langle \psi p_d^Nf, p_d^Nt_\lambda \right \rangle = \left \langle \varphi p_d^Nf, p_d^Nt_\lambda \right \rangle.
\end{align*}Now apply the fact that $T$ is a module map to get $Tp_d^Nf=p_d^N Tf$ and then since $M_{p_d}$ is a unitary on $L^2(\partial \theta(\bD^d),\mu_{\rho, \theta})$, the above equalities are the same as
 \begin{align*}
\langle Tf,t_\lambda \rangle=\left \langle \psi f, t_\lambda \right \rangle = \left \langle \varphi f, t_\lambda \right \rangle.
\end{align*}Here $\lambda\in\Lambda$ and $f\in H_\rho^2(\theta(\bD^d))$ are arbitrary. This not only shows that $\varphi=\psi$ (by taking $f=\mathbbm 1$ in the above conclusion) but also it derives that $T= M_\psi|_{H_\rho ^2(\theta(\bD^d))}$.


Conversely, for any $\psi \in L^\infty \left(\partial \theta(\bD^d), \mu_{\rho, \theta}\right)$, $T= M_{\psi}|_{H_\rho^2(\theta(\bD^d))}$ defines a bounded operator from $H_\rho^2(\theta(\bD^d)) $ to $L^2(\partial \theta(\bD^d),\mu_{\rho, \theta}) $ and clearly it is a module map over $\cA(\theta(\bD^d))$. Hence the map $T\mapsto T(\mathbbm 1)$ defines a group isomorphism from

\noindent $\operatorname{Hom}\left(H_\rho^2(\theta(\bD^d)),L^2(\partial \theta(\bD^d), \mu_{\rho, \theta}) \right)$ onto  $L^\infty \left(\partial \theta(\bD^d),\mu_{\rho, \theta}\right)$.
\end{proof}

For notational simplicity we write $H_\rho^2$ and $L_\rho^2$ to denote $H_\rho^2(\theta(\bD^d))$ and $L^2(\partial \theta(\bD^d), \mu_{\rho, \theta}) $ respectively for the result stated below.

\begin{thm}\label{Hom-lemma2}
The group $\operatorname{Hom}\left(H_\rho^2, L_\rho^2 / H_\rho^2 \right)$ is isometrically isomorphic to $\operatorname{Hank}(\partial \theta(\bD^d))$ (those symbols in ${H_\rho^2}^\perp$ which define bounded big Hankel operators) via the mapping $T\mapsto T(\mathbbm 1)$.
\end{thm}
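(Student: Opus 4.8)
The plan is to realize the quotient module $L_\rho^2/H_\rho^2$ concretely as the orthogonal complement $(H_\rho^2)^\perp$ sitting inside $L_\rho^2$, and then to observe that a module map out of $H_\rho^2$ is completely determined by its value at $\mathbbm 1$, where it is forced to coincide with a big Hankel operator. First I would record that $H_\rho^2$ is a submodule of $L_\rho^2$ (it is invariant under each coordinate multiplier $M_{p_j}$, hence under $M_f$ for every $f\in\cA(\theta(\bD^d))$), so the quotient module is meaningful, and that the canonical Hilbert-space unitary $q:L_\rho^2/H_\rho^2\to (H_\rho^2)^\perp$ induced by $I-P_{H_\rho^2}$ is in fact a module isomorphism. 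The only point to verify is that $q$ intertwines the actions, that is, $q(f\cdot[\psi])=(I-P_{H_\rho^2})\bigl(f\,q([\psi])\bigr)$ for $f\in\cA(\theta(\bD^d))$; this follows from the decomposition $\psi=P_{H_\rho^2}\psi+(I-P_{H_\rho^2})\psi$ together with the submodule property $f\,P_{H_\rho^2}\psi\in H_\rho^2$, which is annihilated by $I-P_{H_\rho^2}$.

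Next, given a module map $T:H_\rho^2\to L_\rho^2/H_\rho^2$, I would set $\varphi:=q(T(\mathbbm 1))\in (H_\rho^2)^\perp$. For a holomorphic polynomial $f$ the module property gives $T(f)=T(f\cdot\mathbbm 1)=f\cdot T(\mathbbm 1)$, and applying the intertwining of $q$ yields $q(T(f))=(I-P_{H_\rho^2})(\varphi f)=H_\varphi(f)$. Since $\bC[\boldsymbol p]$ is dense in $H_\rho^2$ and $q\circ T$ is bounded, $H_\varphi$ extends to the bounded operator $q\circ T$; in particular $\varphi\in\operatorname{Hank}(\partial\theta(\bD^d))$ and $\|H_\varphi\|=\|q\circ T\|=\|T\|$, the last equality because $q$ is a unitary. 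Thus the assignment $T\mapsto\varphi=q(T(\mathbbm 1))$ lands in $\operatorname{Hank}(\partial\theta(\bD^d))$ and is isometric; it is plainly additive, and it is injective because $\varphi=0$ forces $q\circ T$ to vanish on a dense set, whence $T=0$.

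For surjectivity, starting from an arbitrary $\varphi\in\operatorname{Hank}(\partial\theta(\bD^d))$ I would define $T:=q^{-1}\circ H_\varphi$ and check it is a module map by the same bookkeeping: for $f\in\cA(\theta(\bD^d))$ and $g\in H_\rho^2$ one has $H_\varphi(fg)=(I-P_{H_\rho^2})(\varphi f g)=(I-P_{H_\rho^2})\bigl(f\,(I-P_{H_\rho^2})(\varphi g)\bigr)=f\cdot H_\varphi(g)$, again using $f\,P_{H_\rho^2}(\varphi g)\in H_\rho^2$, so that $T(fg)=f\cdot T(g)$; moreover $q(T(\mathbbm 1))=H_\varphi(\mathbbm 1)=(I-P_{H_\rho^2})\varphi=\varphi$ since $\varphi\in (H_\rho^2)^\perp$. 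Combining this with the previous paragraph, $T\mapsto T(\mathbbm 1)$ is an additive bijection onto $\operatorname{Hank}(\partial\theta(\bD^d))$ that preserves norms, which is the assertion.

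I do not anticipate a deep obstacle; the one point I would be most careful about is the interchange between the quotient-module action and the compressed multiplication on $(H_\rho^2)^\perp$, namely the repeated use of the operator identity $(I-P_{H_\rho^2})M_f=(I-P_{H_\rho^2})M_f(I-P_{H_\rho^2})$ on $L_\rho^2$. This identity is exactly what makes both $q$ and $T=q^{-1}\circ H_\varphi$ module maps, and it rests squarely on $H_\rho^2$ being a submodule of $L_\rho^2$. Once this is in place, boundedness of $H_\varphi$ (equivalently $\varphi\in\operatorname{Hank}(\partial\theta(\bD^d))$) matches boundedness of $T$, and the density of polynomials upgrades the agreement $q\circ T=H_\varphi$ from $\bC[\boldsymbol p]$ to all of $H_\rho^2$.
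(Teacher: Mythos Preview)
Your proof is correct and follows essentially the same route as the paper: identify $L_\rho^2/H_\rho^2$ with $(H_\rho^2)^\perp$, use the module property to see that $T$ is determined on polynomials by $T(\mathbbm 1)$, match $q\circ T$ with the Hankel operator $H_\varphi$ to get the isometry, and construct the inverse directly from a bounded Hankel symbol. You are somewhat more explicit than the paper about the intertwining identity $(I-P_{H_\rho^2})M_f=(I-P_{H_\rho^2})M_f(I-P_{H_\rho^2})$ that underpins the module isomorphism $q$, but the substance is the same.
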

\begin{proof} 
Let $T\in \operatorname{Hom}\left(H_\rho ^2, L_\rho^2 /H_\rho^2  \right)$. Note that $L_\rho ^2 /H_\rho ^2 \cong (H_\rho ^2 )^\perp$. For any holomorphic polynomial $f$, $T(f)=fT(\mathbbm{1})$ and $T(\mathbbm{1}) \in L_\rho ^2/H_\rho ^2 $ can be uniquely identified with some $\psi \in (H_\rho^2)^\perp$. 
From this we can define, 
$$\Psi :\operatorname{Hom}\left(H_\rho^2 , L_\rho ^2 /H_\rho ^2 \right) \to \operatorname{Hank}(\partial \theta(\bD^d)) \ \text{by}\  \Psi(T)=\psi .$$ Since $T(\mathbbm{1})= \psi + H_\rho^2 $, we have  
$$ f\cdot T(\mathbbm{1}) = \psi f + H_\rho^2 = P_{H_\rho^2}^\perp (\psi f) + H_\rho^2                                       $$
for every holomorphic polynomial $f$ and hence 
$$ \|P_{H_\rho^2}^\perp (\psi f)\|_2 =  \|P_{H_\rho^2}^\perp (\psi f) + H_\rho^2 \|_{\text{quotient norm}} = \|T(f)\| \leq \|T\| \|f \|_2$$
So, $H_{\psi}$ extends as a bounded linear operator from $H_\rho^2$ to $(H_\rho^2)^\perp$ and hence $\psi \in \operatorname{Hank}(\partial \theta(\bD^d))$. This shows the map $\Psi $ is well defined. Also, $\|H_\psi \| \leq \|T\|$. 

Further, for any $\psi \in \operatorname{Hank}(\partial \theta(\bD^d))$ we have $T\in \operatorname{Hom}\left(H_\rho ^2, L_\rho^2 /H_\rho^2  \right)$ defined as
$$ T(f)= \psi f + H_\rho^2 $$
for each polynomial $f\in H_\rho^2$. Note that, 
$$\|T(f)\| = \|P_{H_\rho^2}^\perp (\psi f) + H_\rho^2 \|_{\text{quotient norm}} \leq \|H_\psi\|\|f \| $$
and therefore $\|T\| \leq \|H_\psi\|$. Thus $\Psi$ is an isometric isomorphism from $\operatorname{Hom}\left(H_\rho ^2, L_\rho^2 /H_\rho^2  \right)$ to $\operatorname{Hank}(\partial \theta(\bD^d))$ endowed with the norm $\|\psi\|_{\operatorname{Hank}(\partial \theta(\bD^d))}=\|H_\psi\| $ for each $\psi \in \operatorname{Hank}(\partial \theta(\bD^d))$.
\end{proof}

The extension group can be understood well through the following theorem from \cite{Car-Cla1}.

Suppose $\mathcal{H}$ and $\mathcal{K}$ are $ \cA(\theta(\bD^d))$ Hilbert modules. Let $\mathfrak{A}= \mathfrak{A}(\mathcal{K},\mathcal{H})$ be the space of all {\em co-cycles}, i.e., continuous bilinear functions $\sigma: \cA(\theta(\bD^d))  \times \mathcal{K} \to \mathcal{H}$ which satisfy the condition 
\begin{align}\label{cocycle}
f \cdot\sigma(g,k)+ \sigma(f, g\cdot k)= \sigma(fg,k)
\end{align}
for $f,g\in \cA(\theta(\bD^d))$ and $k\in \mathcal{K}$. Examples of co-cycles include bilinear functions of the form
\begin{align}\label{coboundary}
\sigma(f,k)= f\cdot L(k)- L(f\cdot k)
\end{align} for some bounded linear operator $L:\mathcal{K} \to \mathcal{H}$. In general, a co-cycle may not arise from a bounded linear operator as above; those who do are called {\em co-boundaries}. We denote by $\mathfrak{B}=\mathfrak{B}(\mathcal{K},\mathcal{H})$ the subspace of $\mathfrak A$ consisting of co-boundaries. 
\begin{thm}[See Theorem 2.2.2 in \cite{Car-Cla1}]\label{cohom}
Let $\mathcal{H}$ and $\mathcal{K}$ be $ \cA(\theta(\bD^d))$ Hilbert modules, and $\mathfrak{A}$, $\mathfrak{B}$ be the spaces of co-cycles and co-boundaries, respectively. Then $$ \operatorname{Ext}_{\mathfrak{H}}(\mathcal{K}, \mathcal{H})= \mathfrak{A}/\mathfrak{B}.$$
\end{thm}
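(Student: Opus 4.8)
The plan is to realize the isomorphism by hand, building a bijection between cohomology classes of cocycles and equivalence classes of extensions and then checking that it respects the additive structures. First I would go from cocycles to extensions. Given $\sigma \in \mathfrak{A}$, endow the Hilbert-space direct sum $\mathcal{H}\oplus\mathcal{K}$ with the twisted action
\[
f \cdot (h,k) = \big( f\cdot h + \sigma(f,k),\ f\cdot k \big), \qquad f \in \cA(\theta(\bD^d)).
\]
The cocycle identity \eqref{cocycle} is precisely the associativity requirement $f\cdot(g\cdot(h,k))=(fg)\cdot(h,k)$, and continuity of $\sigma$ makes each $T_f$ bounded, so this is a genuine Hilbert module $\cJ_\sigma$. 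With $i(h)=(h,0)$ and $\pi(h,k)=k$ (note $\sigma(f,0)=0$ by bilinearity, so $i$ is a module map), one gets a short exact sequence $E_\sigma: 0\to\mathcal{H}\xrightarrow{i}\cJ_\sigma\xrightarrow{\pi}\mathcal{K}\to 0$, and I set $\Phi(\sigma)=[E_\sigma]$.

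Next I would prove surjectivity. Starting from an arbitrary extension $E:0\to\mathcal{H}\xrightarrow{\alpha}\cJ\xrightarrow{\beta}\mathcal{K}\to 0$, the exactness makes $\alpha$ a topological isomorphism onto the closed subspace $\operatorname{ran}\alpha=\ker\beta$, and the open mapping theorem (or passing to the orthogonal complement of $\ker\beta$) yields a bounded linear, not necessarily module, section $q:\mathcal{K}\to\cJ$ with $\beta q=\operatorname{id}_{\mathcal{K}}$. Since $\beta$ is a module map, $\beta\big(f\cdot q(k)-q(f\cdot k)\big)=0$, so this element lies in $\operatorname{ran}\alpha$ and I may define $\sigma_q(f,k)=\alpha^{-1}\big(f\cdot q(k)-q(f\cdot k)\big)$. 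Using that $\alpha$ is a module map, a direct computation verifies the cocycle identity, and $(h,k)\mapsto\alpha(h)+q(k)$ is a module isomorphism $\cJ_{\sigma_q}\to\cJ$ realizing $E_{\sigma_q}\sim E$. Replacing $q$ by another bounded section $q'$ changes $\sigma_q$ by the coboundary $f\cdot L(k)-L(f\cdot k)$ with $L=\alpha^{-1}(q'-q)$, so the class of $\sigma_q$ in $\mathfrak{A}/\mathfrak{B}$ depends only on $E$.

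Then I would identify the kernel of $\Phi$ with $\mathfrak{B}$. If $\sigma(f,k)=f\cdot L(k)-L(f\cdot k)$ is a coboundary, the map $\gamma(h,k)=(h-L(k),k)$ is a module isomorphism from $\cJ_\sigma$ onto the trivial module $\mathcal{H}\oplus\mathcal{K}$ making the defining diagram commute, so $[E_\sigma]$ is the zero class. Conversely, if $E_\sigma$ is equivalent to the split sequence, the splitting furnishes a bounded \emph{module} section, from which the required $L$ with $\sigma=\partial L$ is read off. Combined with the previous step, this shows that $\Phi$ descends to a bijection $\overline{\Phi}:\mathfrak{A}/\mathfrak{B}\to\operatorname{Ext}_{\mathfrak{H}}(\mathcal{K},\mathcal{H})$.

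Finally I would check that $\overline{\Phi}$ is a homomorphism, i.e.\ that the Baer sum of $[E_{\sigma_1}]$ and $[E_{\sigma_2}]$ equals $[E_{\sigma_1+\sigma_2}]$; this is where the genuine bookkeeping lives. The Baer sum is formed by pulling back along the diagonal $\mathcal{K}\to\mathcal{K}\oplus\mathcal{K}$ and pushing out along the addition map $\mathcal{H}\oplus\mathcal{H}\to\mathcal{H}$, and I would track a chosen section through both operations to see that the associated cocycle is exactly $\sigma_1+\sigma_2$. I expect this compatibility with the Baer sum to be the main obstacle, since it requires unwinding the pullback and pushout constructions in the Hilbert-module category and verifying that every intermediate map stays bounded and module-linear. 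Once additivity is established, $\overline{\Phi}$ is the asserted group isomorphism $\operatorname{Ext}_{\mathfrak{H}}(\mathcal{K},\mathcal{H})=\mathfrak{A}/\mathfrak{B}$.
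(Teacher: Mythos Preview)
Your proposal is correct and follows the standard route for identifying $\operatorname{Ext}$ with cocycles modulo coboundaries. The paper itself does not give a proof of this theorem: it simply cites \cite[Theorem~2.2.2]{Car-Cla1} and remarks that the result holds for Hilbert modules over any continuous function algebra, so there is no in-paper argument to compare against beyond observing that your construction is precisely the classical one carried out in that reference.
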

See \cite[Theorem 2.2.2]{Car-Cla1} for a proof of \cref{cohom}, where the same conclusion is proved for Hilbert modules over any continuous function algebra $\cA$. Thus, in view of \cref{L:Projective}, a Hilbert module $\mathcal K$ is projective if and only if given any Hilbert module $\cH$, every co-cycle $\sigma:\mathcal A(\theta(\bD^d))\times\cK\to\cH$ is a co-boundary. This will prove useful in the result below, which shows that a unitary Hilbert module over $\cA(\theta(\bD^d))$ is almost a projective Hilbert module in the cramped category. 
\begin{proposition}
Let $\mathcal{K}$ be a $\overline{\theta(\bD^d)}$-unitary Hilbert module over $\cA(\theta(\bD^d))$. Then $$ \operatorname{Ext}_{\mathfrak{C}}\left(\mathcal{K}, \cH\right) = 0$$for every pure cramped Hilbert module $\cH$ over $\cA(\theta(\bD^d))$.
\end{proposition}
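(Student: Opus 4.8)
The plan is to pass to the cohomological picture. By \cref{cohom} and the projectivity criterion recorded just after it, it suffices to prove that every continuous co-cycle $\sigma:\cA(\theta(\bD^d))\times\cK\to\cH$ is a co-boundary; the bounded operator $L$ implementing it then furnishes a bounded module splitting of any extension, and this splitting lives in $\mathfrak{C}$ because $\cH$ and $\cK$ (hence $\cH\oplus\cK$) are cramped. Since the extension group is invariant under module isomorphisms and $\cH$ is similar to a pure contractive module, I would first replace $\cH$ by that pure contractive representative and leave $\cK$ untouched, a $\overline{\theta(\bD^d)}$-unitary module being already contractive. After this reduction the two distinguished operators are $T:=T_{p_d}$ on $\cH$, which is a pure contraction (so $T^{*n}\to 0$ strongly), and $U:=T_{p_d}$ on $\cK$, which is unitary by the observation made above that $\theta_d=(z_1\cdots z_d)^{m/t}$ is unimodular on $\bT^d$. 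The crucial numerical fact is $\|p_d^{\,n}\|_{\cA(\theta(\bD^d))}=\sup_{\overline{\theta(\bD^d)}}|p_d|^n\le 1$, because $|p_d|\le 1$ on $\overline{\theta(\bD^d)}$.

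The heart of the argument is the construction of $L$ for the single generator $p_d$. The naive Neumann series $\sum_{n\ge0}T^n\sigma(p_d,U^{-(n+1)}\,\cdot\,)$ formally solves the Sylvester equation $TL-LU=\sigma(p_d,\cdot)$ but does not converge, since $\|T^n\|\not\to0$ for a pure contraction. I would instead fix a Banach limit $\operatorname{LIM}$ and define $L:\cK\to\cH$ weakly by $\langle L(k),h\rangle=-\operatorname{LIM}_n\langle\sigma(p_d^{\,n},U^{-n}k),h\rangle$. The sequence $\{\sigma(p_d^{\,n},U^{-n}k)\}_n$ is bounded in $\cH$ by $\|\sigma\|\,\|p_d^{\,n}\|_{\cA}\,\|k\|\le\|\sigma\|\,\|k\|$, where both $\|p_d^{\,n}\|_{\cA}\le1$ and the unitarity of $U$ are used, so $L$ is a well-defined bounded operator with $\|L\|\le\|\sigma\|$. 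To verify the co-boundary identity $\sigma(p_d,k)=TL(k)-L(Uk)$ I would expand $\sigma(p_d^{\,n+1},U^{-n}k)=T\sigma(p_d^{\,n},U^{-n}k)+\sigma(p_d,k)$ from the co-cycle relation together with $p_d^{\,n}\cdot U^{-n}k=k$ on $\cK$, and then use the shift-invariance of the Banach limit to cancel the $T\sigma(p_d^{\,n},U^{-n}k)$ terms, leaving exactly $\sigma(p_d,k)$.

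It remains to promote this from $p_d$ to all of $\cA(\theta(\bD^d))$. I would set $\delta(f,k):=\sigma(f,k)-\bigl(f\cdot L(k)-L(f\cdot k)\bigr)$, a continuous co-cycle that, by the previous step, satisfies $\delta(p_d,\cdot)=0$. Applying the co-cycle relation to the two factorizations of $p_dg=gp_d$ yields the intertwining $\delta(g,Uk)=T\delta(g,k)$ for every $g\in\cA(\theta(\bD^d))$, and iterating (using that $U$ is invertible) gives $\delta(g,k)=T^{n}\delta(g,U^{-n}k)$ for all $n$. Since $\|\delta(g,U^{-n}k)\|\le\|\delta\|\,\|g\|_{\cA}\,\|k\|$ stays bounded while $T^{*n}\to0$ strongly, pairing against an arbitrary $h\in\cH$ forces $\langle\delta(g,k),h\rangle=\langle\delta(g,U^{-n}k),T^{*n}h\rangle\to0$; hence $\delta\equiv0$ and $\sigma$ is a co-boundary. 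The main obstacle, and the point where both hypotheses are indispensable, is the middle step: purity of $T_{p_d}$ alone does not make the defining series converge, and it is precisely the interplay of $\|p_d^{\,n}\|_\infty\le1$, the unitarity of $T_{p_d}$ on $\cK$, and the shift-invariance of a Banach limit that produces a bounded $L$, after which purity re-enters decisively to annihilate $\delta$.
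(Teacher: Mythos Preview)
Your proposal is correct and follows essentially the same route as the paper: define $L$ via a translation-invariant Banach limit applied to $\sigma(p_d^{\,n},U^{-n}k)$, verify the Sylvester identity for the single generator $p_d$ using the co-cycle relation and shift-invariance, and then kill the defect on all of $\cA(\theta(\bD^d))$ by the intertwining $T^n\delta(g,U^{-n}k)=\delta(g,k)$ together with purity of $T_{p_d}$. The only cosmetic differences are your explicit similarity reduction of $\cH$ to a pure contractive representative and your packaging of the defect as a co-cycle $\delta$ rather than as the operator $k\mapsto L(h\cdot k)-h\cdot L(k)-\sigma(h,k)$; these are the same computation.
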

\begin{proof}
Consider a co-cycle $\sigma: \cA(\theta(\bD^d)) \times \cK \to \cH $. Let $U_{p_j}$ be the multiplication by the co-ordinate function $p_j$ on $\cK$ for $j=1,\dots,d$. Then by our assumption, $\left( U_{p_1},\dots, U_{p_d} \right) $ is a $\overline{\theta(\bD^d)}$-unitary on $\cK$ (refer to Definition \ref{D:TheUsualHeroes}). Also, we have the $d$-tuple of co-ordinate multipliers $\left(T_{p_1},\dots, T_{p_d} \right) $ on $\cH$ such that $T_{p_d}$ is a pure contraction.
Note that for every $f\in \cK$ and $n\geq1$, \begin{align*}
\|\sigma (p_d ^n, U_{p_d}^{*n}f)\| \leq \|\sigma \| \|p_d ^n\|_{\overline{\theta(\bD^d)},\infty} \|U_{p_d}^{*n}f\| = \|\sigma \| \|f\|.
\end{align*}
So, $\{\sigma (p_d^n, U_{p_d}^{*n}f) \}_n$ is a norm bounded sequence for each $f \in \cK$. Consider a translation invariant Banach limit $\operatorname{LIM}$ (say) on $\ell_\infty (\mathbb{N})$, the space of all bounded sequences of complex numbers. Define a bounded linear operator $L: \cK \rightarrow H$ by 
$$\langle Lf, g \rangle = \operatorname{LIM} \left\lbrace \langle \sigma (p_d^n, U_{p_d}^{*n}f), g \rangle  \right\rbrace_n, $$
for $f \in \cK$ and $g\in \cH$. We claim that
\begin{align} \label{claim1}
(LU_{p_d} - T_{p_d} L)f = \sigma (p_d, f)  \ \text{for}\  f\in \cK. 
\end{align}
Indeed, for $f\in \cK$ and $g \in \cH $
\begin{align*}
&\langle (T_{p_d}L - LU_{p_d})f, g \rangle \\
&=\langle T_{p_d} Lf, g \rangle - \langle L U_{p_d} f, g \rangle \\
&=\operatorname{LIM} \left \lbrace \langle p_d \sigma (p_d^n, U_{p_d}^{*n}f), g \rangle \right \rbrace_n  - \operatorname{LIM} \left \lbrace \langle \sigma(p_d^n, U_{p_d}^{*n}U_{p_d} f), g \rangle \right \rbrace_n \\
&= \operatorname{LIM} \left \lbrace \langle p_d \sigma (p_d^n, U_{p_d}^{*n}f), g \rangle \right \rbrace_n - \operatorname{LIM} \left \lbrace \langle p_d \sigma (p_d^{n-1}, U_{p_d}^{*(n-1)}f), g \rangle - \langle \sigma (p_d, p_d^{n-1} U_{p_d}^{*(n-1)}f), g \rangle \right \rbrace_n \\
&= \operatorname{LIM} \left \lbrace \langle p_d \sigma (p_d^n, U_{p_d}^{*n}f), g \rangle \right \rbrace_n - \operatorname{LIM} \left \lbrace \langle p_d\sigma(p_d^{n-1}, U_{p_d}^{*(n-1)}f), g \rangle \right \rbrace_n - \langle \sigma (p_d , f), g \rangle \\
&= - \langle \sigma (p_d ,f), g \rangle \ (\text{ by translation invariance of}\ \operatorname{LIM}).
\end{align*}
This proves \eqref{claim1}. Now for $h\in \cA(\theta(\bD^d))$, we define a linear operator on $\cK$ by
$$Tf= L(h\cdot f) - h\cdot Lf -\sigma(h, f) \ \text{for}\ f\in \cK.$$
The goal is to show that $T$ is the zero operator. Towards that we compute
\begin{align*}
&(T_{p_d}T- T U_{p_d})f \\
&= T_{p_d}(L(h\cdot f) - h\cdot Lf -\sigma (h, f) ) - L(p_d h\cdot f) + h \cdot L(p_d \cdot f) + \sigma(h, p_d \cdot f) \\
&=h\cdot (L(p_d \cdot f) - p_d \cdot Lf) + (T_{p_d} L -LU_{p_d})(h\cdot f) + \sigma (h, p_d\cdot f) - p_d \cdot \sigma (h,f)\\
&= h \cdot \sigma(p_d,f) - \sigma (p_d,h\cdot f)+ \sigma (h, p_d \cdot f) - p_d \cdot \sigma (h,f) \ \text{ (by \eqref{claim1}) }\\
&= \sigma (p_d \cdot h, f)-\sigma (p_d \cdot h, f) =0.
\end{align*} 
Therefore we have $T_{p_d} T = T U_{p_d}$. This implies that 
$$ U_{p_d}^{*n}T^* = T^* T_{p_d}^{*n}.$$ Now since $T_{p_d}$ is a pure contraction and $U_{p_d}$ is a unitary operator, 
\begin{align*}
\|T^*g\|=\|U_{p_d}^{*n}T^*g \| = \|T^* T_{p_d}^{*n} g\| \leq \|T^*\| \|T_{p_d}^{*n}g\| \rightarrow 0.
\end{align*}
Consequently for every $f\in \cK$ and $h\in \cA(\theta(\bD^d))$,
\begin{align*}
\sigma (h,f)= L(h\cdot f) - h\cdot L(f)
\end{align*}
showing that an arbitrary co-cycle $\sigma $ is a co-boundary, i.e., $\operatorname{Ext}(\cK, \cH)= \{0\}$ whenever $\cH$ is a pure cramped Hilbert module over $\cA(\theta(\bD^d))$.
\end{proof}

 
Before we prove \cref{Thm:NonProj}, we recall a result in homological algebra that shows how one can get a long exact sequence from a short exact sequence. A proof of this can be found in \cite{Car-Cla1}. The statement involves certain maps induced by a Hilbert module map. Let $\cH,\cK$ and $\cE$ be any Hilbert modules over $\cA(\theta(\bD^d))$, and $\alpha : \cH \to \cK$ be a module map. Then $\alpha$ induces a natural map $\alpha_* : \textup{Hom}_{\mathfrak{H}}(\cE, \cH) \to \textup{Hom}_{\mathfrak{H}}(\cE, \cK)$ such that $\alpha_* (T) = \alpha \circ T$ for each $T \in \textup{Hom}_{\mathfrak{H}}(\cE, \cH)$. Moreover, $\alpha$ induces another map which, with a slight abuse of notation, we again denote by $\alpha_*: \textup{Ext}_{\mathfrak{H}}(\cE, \cH) \to \textup{Ext}_{\mathfrak{H}}(\cE, \cK)$ defined in the following way. Let $E: 0 \longrightarrow \mathcal{H} \xlongrightarrow{\beta} \cJ \xlongrightarrow{\gamma} \mathcal{E} \longrightarrow 0 $ be a representative of a equivalence class $[E]$ in $\textup{Ext}_{\mathfrak{H}}(\cE, \cH)$ then a representative $\alpha E$ of $\alpha_*([E])$ is obtained by the diagram below:
 
\[
  \begin{tikzcd}
    E: 0 \arrow{r} & \mathcal{H} \arrow{r}{\beta}  \arrow{d}{\alpha} & \cJ \arrow{r}{\gamma} \arrow[d, dashed,"\psi "]  & \mathcal{E} \arrow{r} \arrow[d, equal] & 0 \\
    \alpha E: 0 \arrow{r} & \mathcal{K} \arrow[r, dashed, "\beta' "] & \cJ' \arrow[r, dashed, "\gamma' "]& \cE \arrow{r} & 0
  \end{tikzcd}
\]
Here, with $\overline W$ as the closure in $\cK\oplus\cJ$ of the subspace $W:=\{(\alpha (h),-\beta(h)):h\in\cH\}$, $\cJ'=(\cK\oplus\cJ)/{\overline W}$, the maps $\beta'$ and $\psi$ are the natural inclusions and $\gamma'$ is the map induced by composition of the projection of $\cK \oplus \cJ$ onto $\cJ$ followed by $\gamma$. What remains to check is that $\cJ'$ is a Hilbert module and that the maps $\psi,\gamma'$ and $\beta'$ are module maps. This would amount to what is referred to as pushout of the top-left corner of the diagram. The details can be found in \cite[Proposition 2.1.4]{Car-Cla1}, where the pushouts and the pullbacks are shown to exist in the category of Hilbert modules over any function algebra.

Furthermore, for $T \in \textup{Hom}_{\mathfrak{H}}(\cE, \cH)$ and $[E] \in \textup{Ext}_\mathfrak{H}(\cK, \cH)$ with $E: 0 \longrightarrow \mathcal{H} \xlongrightarrow{\alpha} \cJ \xlongrightarrow{\beta} \mathcal{K} \longrightarrow 0 $, we define $[ET]$ to be the equivalence class of the short exact sequence $ET$ which is the top row of the diagram:
\[
  \begin{tikzcd}
    ET: 0 \arrow{r} & \mathcal{H} \arrow[r, dashed, "\alpha' "]  \arrow[d, equal] & \cJ' \arrow[r, dashed, "\beta'"] \arrow[d, dashed,"\psi"]  & \mathcal{E} \arrow{r} \arrow{d}{T} & 0 \\
    E: 0 \arrow{r} & \mathcal{H} \arrow{r}{\alpha}  & \cJ \arrow{r}{\beta}   & \mathcal{K} \arrow{r}  & 0 
  \end{tikzcd}
\]
Here $\cJ' = \{(j, e) \in \cJ \oplus \cE : \beta(j)= T(e)\}$ and the maps $\psi, \beta'$ form the pullback of the bottom right corner of the diagram, and $\alpha'(h)= (\alpha(h), 0)$ for each $h \in \cH$. See \cite[Proposition 2.1.4]{Car-Cla1} for more details.


\begin{thm}[See Proposition 2.1.5 of \cite{Car-Cla1}]\label{long-exact}
Let $E: 0 \longrightarrow \mathcal{H} \xlongrightarrow{\alpha} \cJ \xlongrightarrow{\beta} \mathcal{K} \longrightarrow 0 $ be a short exact sequence where $\cH,\cJ$ and $\cK$ are Hilbert modules in $\mathfrak{H}$. Let $\cE$ be an object in $\mathfrak{H}$. Then we have the following exact sequence
\[
  \begin{tikzcd}
     0 \arrow{r} & \operatorname{Hom}_{\mathfrak{H}}(\cE,\mathcal{H}) \arrow{r}{\alpha_{*}} & \operatorname{Hom}_{\mathfrak{H}}(\cE,J) \arrow{r}{\beta_{*}} & \operatorname{Hom}_{\mathfrak{H}}(\cE,\mathcal{K}) \arrow{d}{\delta} \\
 & \operatorname{Ext}_{\mathfrak{H}}(\cE,\mathcal{K}) & \arrow{l}{\beta_*} \operatorname{Ext}_{\mathfrak{H}}(\cE,J) & \arrow{l}{\alpha_*} \operatorname{Ext}_{\mathfrak{H}}(\cE,\mathcal{H})
  \end{tikzcd}
\] 
where $\delta$ is the connecting homomorphism and is given by $\delta(T) = [ET]$ for each $T \in \textup{Hom}_\mathfrak{H}(\cE, \cK)$. 
\end{thm}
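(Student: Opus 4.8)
The plan is to reduce everything to the concrete cocycle/coboundary picture of $\operatorname{Ext}$ furnished by \cref{cohom} and then verify exactness node by node by a diagram chase. Writing $\mathfrak A(\cE,\,\cdot\,)$ and $\mathfrak B(\cE,\,\cdot\,)$ for the cocycles and coboundaries, the induced maps on $\operatorname{Ext}$ become post-composition, $\alpha_*[\sigma]=[\alpha\circ\sigma]$ and $\beta_*[\tau]=[\beta\circ\tau]$, and the first step is to check that this agrees with the pushout description given before the theorem. The connecting map then admits the explicit cocycle formula $\delta(T)=[\sigma_T]$, where $\sigma_T(f,e)=\alpha^{-1}\big(f\cdot T'(e)-T'(f\cdot e)\big)$ and $T':=s\circ T$ is a bounded (non-module) lift of $T$ through $\beta$ built from a bounded linear section $s$ of $\beta$. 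I would record at the outset that this matches the pullback definition $\delta(T)=[ET]$ by computing the cocycle of the extension $ET$ relative to the section $e\mapsto(sT(e),e)$ of $\beta'$, which reproduces exactly $\sigma_T$.

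With these formulas in hand, exactness at the two $\operatorname{Hom}$ nodes is quick: $\alpha_*$ is injective on $\operatorname{Hom}(\cE,\cH)$ because $\alpha$ is injective, and $\ker\beta_*=\operatorname{im}\alpha_*$ at $\operatorname{Hom}(\cE,\cJ)$ because $\operatorname{ran}S\subseteq\ker\beta=\operatorname{ran}\alpha$ forces $S=\alpha\circ(\alpha^{-1}S)$ with $\alpha^{-1}S$ a bounded module map. For exactness at $\operatorname{Hom}(\cE,\cK)$ I would use that a module map $S$ lifting $T$ is an admissible choice of $T'$, so $\sigma_T$ becomes the zero cocycle, giving $\operatorname{im}\beta_*\subseteq\ker\delta$; conversely, if $\sigma_T=f\cdot L(e)-L(f\cdot e)$ is a coboundary then $S:=T'-\alpha\circ L$ is a module map with $\beta\circ S=T$. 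For exactness at $\operatorname{Ext}(\cE,\cH)$, the inclusion $\operatorname{im}\delta\subseteq\ker\alpha_*$ is immediate since $\alpha\circ\sigma_T$ is precisely the coboundary of the bounded linear map $T'$; and if $\alpha\circ\sigma$ is the coboundary of some bounded $M\colon\cE\to\cJ$, then $T:=\beta\circ M$ is a module map (because $\beta\alpha=0$) with $\delta(T)=[\sigma]$. Finally, exactness at $\operatorname{Ext}(\cE,\cJ)$ follows from $\beta_*\alpha_*[\sigma]=[\beta\alpha\sigma]=0$, together with the reverse inclusion obtained by correcting a cocycle $\tau$ whose image $\beta\circ\tau$ is the coboundary of $N$ by the coboundary of $s\circ N$, so that the corrected cocycle $\tau'$ lands in $\ker\beta=\operatorname{ran}\alpha$ and hence equals $\alpha\circ\sigma$ with $\sigma:=\alpha^{-1}\circ\tau'$.

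The genuinely non-formal part, and the step I expect to be the main obstacle, is the analytic bookkeeping that makes this chase legitimate in the Hilbert-module category rather than in abstract modules. Concretely, I must justify that $\alpha$ has closed range equal to $\ker\beta$, so that $\alpha^{-1}$ is a bounded module map on its range by the open mapping theorem; that the surjection $\beta$ admits a bounded linear section $s$, which is available because $\ker\beta$ is a closed and hence orthogonally complemented subspace of the Hilbert space $\cJ$; and that every object produced in the chase — the corrected cocycle $\tau'$ and the operators $S$, $T$, $M$, $L$ — is continuous while the cocycle identity \eqref{cocycle} and the coboundary form \eqref{coboundary} survive each manipulation. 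Once these boundedness and closed-range facts are secured, each of the exactness verifications is a short computation, and the existence of the pushouts and pullbacks used to define $\delta$ and the induced maps is already guaranteed by the cited \cite[Proposition 2.1.4]{Car-Cla1}.
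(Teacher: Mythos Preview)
The paper does not supply its own proof of this statement; it simply quotes Proposition 2.1.5 of \cite{Car-Cla1} and refers the reader there. Your proposal therefore cannot be compared against an in-paper argument, but it is a correct and self-contained sketch of the standard proof: translating $\operatorname{Ext}$ to cocycles modulo coboundaries via \cref{cohom}, writing the connecting map explicitly through a bounded linear section of $\beta$, and then carrying out the usual diagram chase with the Hilbert-space ingredients (closed range of $\alpha$, open mapping theorem, orthogonal complementation) supplying the needed boundedness. This is essentially how the cited source proceeds, so your outline is in line with what the paper defers to.
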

 A similar theorem holds for the cramped category $\mathfrak{C}$ as well. 
 

\textbf{Proof of Theorem \ref{Thm:NonProj}.}
%
We prove that the Hardy space $H^2_\rho(\theta(\bD^d))$ is not a projective module in any of the categories $\mathfrak H$ or $\mathfrak{C}$. To make the proof notationally less clutter, we write $H^2_\rho$ and $L^2_\rho$ instead of $H^2_\rho(\theta(\bD^d))$ and $L^2(\partial \theta(\bD^d), \mu_{\rho, \theta})$, respectively.  The strategy is to show that in both the categories $\mathfrak{H}$ and $\mathfrak{C}$, there exists a short exact sequence in $\mathcal{S}(H_\rho^2, H_\rho ^2) $ which does not split. Hence $H_\rho^2 $ is not a projective object in $\mathfrak{C}$ as well as in $\mathfrak{H}$. Since the computation is exactly the same for the two categories, we shall not dwell on the category under consideration.
\begin{thm}\label{Hom-thm}
$\operatorname{Ext}_{\mathfrak{H}}\left(H_\rho ^2(\theta(\bD^d)), H_\rho ^2(\theta(\bD^d)) \right) $ and $\operatorname{Ext}_{\mathfrak{C}}\left(H_\rho ^2(\theta(\bD^d)) , H_\rho ^2(\theta(\bD^d)) \right)$ are both non-zero.
\end{thm}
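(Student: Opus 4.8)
The plan is to produce a nonzero class in $\operatorname{Ext}$ by running the long exact sequence of \cref{long-exact} on the canonical short exact sequence attached to the Szeg\"o projection, and to locate the obstruction precisely at the failure of Nehari's theorem. First I would fix the short exact sequence
$$E: 0 \longrightarrow H_\rho^2 \xlongrightarrow{\ i\ } L_\rho^2 \xlongrightarrow{\ \pi\ } L_\rho^2/H_\rho^2 \longrightarrow 0,$$
where $i$ is the inclusion and $\pi$ the quotient map. All three modules are contractive (indeed $H_\rho^2$ is $\overline{\theta(\bD^d)}$-isometric, $L_\rho^2$ is $\overline{\theta(\bD^d)}$-unitary, and a quotient of a contractive module is contractive), so $E$ lives simultaneously in $\mathfrak H$ and in $\mathfrak C$, and one computation will serve both categories.

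Applying \cref{long-exact} with the test module $\cE = H_\rho^2$ yields the exact row
$$\cdots \longrightarrow \operatorname{Hom}(H_\rho^2, L_\rho^2) \xlongrightarrow{\ \pi_*\ } \operatorname{Hom}(H_\rho^2, L_\rho^2/H_\rho^2) \xlongrightarrow{\ \delta\ } \operatorname{Ext}_{\mathfrak H}(H_\rho^2, H_\rho^2) \longrightarrow \cdots,$$
and exactness gives $\ker\delta = \operatorname{Image}\pi_*$. Hence $\operatorname{Ext}_{\mathfrak H}(H_\rho^2, H_\rho^2)\neq 0$ as soon as $\pi_*$ fails to be surjective. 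Next I would invoke \cref{Hom-lemma1} and \cref{Hom-lemma2} to identify $\operatorname{Hom}(H_\rho^2, L_\rho^2)$ with $L^\infty(\partial\theta(\bD^d),\mu_{\rho,\theta})$ and $\operatorname{Hom}(H_\rho^2, L_\rho^2/H_\rho^2)$ with $\operatorname{Hank}(\partial\theta(\bD^d))$, both via the evaluation $T\mapsto T(\mathbbm 1)$. Tracing a map $T = M_\psi|_{H_\rho^2}$ (with $\psi = T(\mathbbm 1)\in L^\infty$) through $\pi_*$ gives $\pi\circ T(\mathbbm 1) = \psi + H_\rho^2$, which under the second identification is $P_{H_\rho^2}^\perp\psi$. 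Thus, under these isomorphisms, $\pi_*$ becomes the restrict-and-project map
$$L^\infty(\partial\theta(\bD^d),\mu_{\rho,\theta}) \longrightarrow \operatorname{Hank}(\partial\theta(\bD^d)), \qquad \psi \longmapsto P_{H_\rho^2}^\perp\psi.$$

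The crux is then to show this map is not onto, and this is exactly what the failure of Nehari's theorem supplies. I would take the symbol $\tilde\Phi$ furnished by \cref{Failure-Nehari}, whose big Hankel operator is bounded; its class $P_{H_\rho^2}^\perp\tilde\Phi$ is a genuine element of $\operatorname{Hank}(\partial\theta(\bD^d))$. If it lay in $\operatorname{Image}\pi_*$, there would be $\psi\in L^\infty$ with $P_{H_\rho^2}^\perp\psi = P_{H_\rho^2}^\perp\tilde\Phi$, i.e.\ $\psi - \tilde\Phi \in H_\rho^2$; writing $\psi = \tilde\Phi + g$ with $g\in H_\rho^2$ the boundary value of a holomorphic function on $\theta(\bD^d)$, this would assert that $\tilde\Phi + g \in L^\infty(\partial\theta(\bD^d),\mu_{\rho,\theta})$ for some holomorphic $g$, contradicting \cref{Failure-Nehari}. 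Therefore $P_{H_\rho^2}^\perp\tilde\Phi \notin \operatorname{Image}\pi_* = \ker\delta$, so $\delta\!\left(P_{H_\rho^2}^\perp\tilde\Phi\right)$ is a nonzero element of $\operatorname{Ext}_{\mathfrak H}(H_\rho^2, H_\rho^2)$. Running the cramped analogue of \cref{long-exact} verbatim gives $\operatorname{Ext}_{\mathfrak C}(H_\rho^2, H_\rho^2)\neq 0$ as well. The step I expect to be the main obstacle is the faithful bookkeeping of the two evaluation isomorphisms, so that $\pi_*$ really is the map $\psi\mapsto P_{H_\rho^2}^\perp\psi$ and that ``lying in $\operatorname{Image}\pi_*$'' translates exactly into the Nehari solvability condition; once this dictionary is pinned down, the conclusion is a formal consequence of exactness.
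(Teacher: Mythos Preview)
Your proposal is correct and follows essentially the same route as the paper: both apply \cref{long-exact} to the short exact sequence $0\to H_\rho^2\to L_\rho^2\to L_\rho^2/H_\rho^2\to 0$, identify the Hom groups via \cref{Hom-lemma1} and \cref{Hom-lemma2}, and then use \cref{Failure-Nehari} to show that $\pi_*$ is not onto. The only cosmetic difference is that the paper phrases the last step as a contradiction (assume $\operatorname{Ext}=0$, deduce $\pi_*$ surjective, contradict), whereas you argue directly by exhibiting an element outside $\ker\delta$.
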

%
\begin{proof}
Consider the short exact sequence
\[
  \begin{tikzcd}
 0 \arrow{r} & H_\rho^2 \arrow{r}{i} & L_\rho^2 \arrow{r}{\pi} & L_\rho ^2/ H_\rho^2 \arrow{r} & 0,
\end{tikzcd}
\]
where $i$ is the inclusion and $\pi$ is the quotient map. Now invoke \cref{long-exact} to get the following exact sequence:
\[
  \begin{tikzcd}
 \operatorname{Hom}\left(H_\rho ^2, H_\rho^2 \right) \arrow{r}{i_*} & \operatorname{Hom} \left(H_\rho^2, L_\rho ^2 \right) \arrow{r}{\pi_*} & \operatorname{Hom}\left( H_\rho ^2, L_\rho ^2/H_\rho ^2 \right)  \arrow{d}{\delta} \\
 \operatorname{Ext}_{\mathfrak{C}}\left(H_\rho^2 ,L_\rho ^2 / H_\rho^2 \right) & \arrow{l}{i_*} \operatorname{Ext}_{\mathfrak{C}}\left(H_\rho ^2,L_\rho^2 \right) &\arrow{l}{\pi_*} \operatorname{Ext}_{\mathfrak{C}}\left(H_\rho^2, H_\rho^2 \right).    
\end{tikzcd}
\]
If $\operatorname{Ext}_{\mathfrak{C}}\left(H_\rho^2, H_\rho^2 \right)= 0$ then, the exactness of the above sequence implies that 
$$ \pi_*: \operatorname{Hom}\left(H_\rho^2, L_\rho ^2 \right) \to \operatorname{Hom} \left(H_\rho ^2, L_\rho^2/H_\rho^2 \right)$$ is a surjective homomorphism. This contradicts \cref{Failure-Nehari} as follows. Pick $\varphi\in \operatorname{Hank}(\partial \theta(\bD^d))$, the set of those symbols in ${H^2_\rho}^\perp$ that defines a bounded Hankel operator. By \cref{Hom-lemma2}, there exists a homomorphism $T:H_\rho^2\to L^2_\rho/H^2_\rho$ such that $T(\mathbbm{1})=\varphi+ H^2_\rho$. Now we apply surjectivity of $\pi_*$ and   \cref{Hom-lemma1} to get $\psi\in L^\infty (\partial \theta(\bD^d), \mu_{\rho, \theta})$ such that $\pi\circ M_\psi|_{H^2_\rho}=\pi_*(M_\psi|_{H^2_\rho})=T$. Applying this equality to the constant function $\mathbbm 1$ we have $\psi+ H^2_\rho=\varphi+H^2_\rho$, which is same as saying $\varphi-\psi \in H^2_\rho$ or equivalently $H_\varphi=H_\psi$. This contradicts \cref{Failure-Nehari}.
\end{proof}


\section{The Normal Category}\label{normal_cat}
\subsection{Normal Hilbert modules}
 We introduce  the category of {\em normal} Hilbert $\cA(\theta(\bD^d))$-modules where we shall find a plenty of projective objects. Our motivation arises from \cite{Guo-Studia}. 
\begin{definition}
A Hilbert $\cA(\theta(\bD^d))$-module $\cH$ is said to be normal if for each $h\in \cH$, the action of $\cA(\theta(\bD^d))$ on $h$,

$$ \cA(\theta(\bD^d)) \ni f \mapsto f\cdot h \in \cH $$
is ($wk^* - wk$)-continuous, i.e., continuous from the weak$^*$ topology of $L^\infty(\partial \theta(\bD^d), \mu_{\rho, \theta})$ restricted to $\cA(\theta(\bD^d))$ to the weak topology on the Hilbert space $\cH$.
\end{definition}
The {\em normal category}, denoted by $\mathfrak{N}$, consists of normal Hilbert $\cA(\theta(\bD^d))$-modules as objects and Hilbert module maps as morphisms. We denote the category of all normal Hilbert modules over the algebra $H^\infty(\theta(\bD^d))$  by $\mathfrak{N}_{\infty}$.

For normal Hilbert modules $\cN_1$ and $\cN_2$, the  extension group $\operatorname{Ext_{\mathfrak{N}}}(\cN_1, \cN_2)$ is defined in a way similar to what was done for the category $\mathfrak{H}$ . A  characterization  of $\operatorname{Ext_{\mathfrak{N}}}(N_1, N_2)$ akin to \cref{cohom} holds. To be more precise, 
$$\operatorname{Ext_{\mathfrak{N}}}(\cN_1, \cN_2) = \mathfrak{A}_{\mathfrak{N}} /\mathfrak{B}_{\mathfrak{N}} $$ 
where $\mathfrak{A}_{\mathfrak{N}}$ is the set of all co-cycles $\sigma : \cA(\theta(\bD^d)) \times \cN_1 \rightarrow \cN_2$ such that for every $h \in N_1$,
the map $ f \mapsto \sigma(f, h)$ is ($wk^* - wk$)-continuous from $\cA(\theta(\bD^d))$  to $\cN_2$ and $\mathfrak{B}_{\mathfrak{N}}$ is the collection of co-boundaries in $\mathfrak{A}_{\mathfrak{N}}$. 

\begin{example}
The Hilbert $\cA(\theta(\bD^d))$-module $L_\rho^2(\partial \theta(\bD^d))$ is a prime example of a normal Hilbert module. To see that, let $h \in L_\rho^2(\partial \theta(\bD^d))$. Suppose $\{f_\lambda \}$ is a net in $\cA(\theta(\bD^d))$ converging to $f$ in the in weak$^*$ topology. Then for every $\psi \in L_\rho^1(\partial \theta(\bD^d))$,
\begin{align}\label{weak}
\int_{\partial \theta(\bD^d)} f_\lambda \psi d\mu_{\rho, \theta} \ \rightarrow \int_{\partial \theta(\bD^d)} f \psi d\mu_{\rho, \theta}.
\end{align}
Now, for $g \in L_\rho^2(\partial \theta(\bD^d))$, by \eqref{weak}, we have
\begin{align*}
\langle f_\lambda h , g \rangle = \int_{\partial \theta(\bD^d)} f_\lambda h \bar{g} d\mu_{\rho, \theta} \ \rightarrow \int_{\partial \theta(\bD^d)} fh \bar{g} d\mu_{\rho, \theta} \ = \langle fh, g \rangle.
\end{align*}
\end{example}
\subsection{The Carath\'eodory approximation}
A {\em rational inner function} on the domain $\theta(\bD^d)$ is a rational function which has its poles off $\overline{\theta(\bD^d)}$ and is continuous as well as unimodular on the boundary. More general inner functions have been considered in the literature. However, we can prove the following approximation theorem with the nice rational inner functions as described above. 
\begin{thm}[Carath\'eodory approximation] \label{Cara-approx}
Let $f$ be in $H^\infty(\theta(\bD^d))$ with $\|f\|_\infty \leq 1$. Then $f$ can be approximated uniformly on compacta in $\theta(\bD^d)$ by a sequence of rational inner functions in $\cA(\theta(\bD^d))$.
\end{thm}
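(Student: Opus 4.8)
The plan is to pull the problem back to the polydisc $\bD^d$, solve it $G$-equivariantly there, and push the approximants forward through $\theta$. First I would record the precise correspondence: by the analytic Chevalley--Shephard--Todd theorem a function $\psi$ is rational inner on $\theta(\bD^d)$ (rational, poles off $\overline{\theta(\bD^d)}$, unimodular on $\theta(\bT^d)$) if and only if $\psi\circ\theta$ is a $G$-invariant rational inner function on $\bD^d$; indeed a $G$-invariant rational function is automatically rational in $\theta_1,\dots,\theta_d$, and $\theta$ carries $\bT^d$ onto $\partial\theta(\bD^d)$ preserving moduli (as in \cref{Radial}). Since $\theta$ is a proper surjection, a sequence converges uniformly on compacta of $\bD^d$ if and only if the descended sequence converges uniformly on compacta of $\theta(\bD^d)$. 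Thus it suffices, given $f$ in the unit ball of $H^\infty(\theta(\bD^d))$, to approximate the $G$-invariant lift $F:=f\circ\theta$, which lies in the unit ball of $H^\infty(\bD^d)$, uniformly on compacta by $G$-invariant rational inner functions on $\bD^d$.

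Next I would make two harmless reductions. Using the dilations $\delta_r$ from \cref{Hart-L2} (equivalently $F_r(\boldsymbol z)=F(r\boldsymbol z)$), I replace $f$ by $f_r$: by homogeneity $f_r$ is holomorphic on a neighbourhood of $\overline{\theta(\bD^d)}$, with $\|f_r\|_\infty\le\|f\|_\infty\le 1$ and in fact $\|f_r\|_{\infty,\overline{\theta(\bD^d)}}<1$ unless $f$ is a unimodular constant (maximum principle), while $f_r\to f$ uniformly on compacta. A diagonal argument then lets me assume from the outset that $f$ extends holomorphically past $\overline{\theta(\bD^d)}$ with $\|f\|_{\infty,\overline{\theta(\bD^d)}}<1$. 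Invoking Oka--Weil on the polynomially convex set $\overline{\theta(\bD^d)}$ and then averaging over $G$ (each $Q\circ\sigma$ is again close to the $G$-invariant $F$, and the $G$-average does not increase the sup-norm), I may further assume $F=Q$ is a $G$-invariant polynomial with $\|Q\|_{\overline{\bD^d}}<1$. A useful simplification of the group is the substitution $w_i=z_i^m$: since every element of $G(m,t,d)$ sends $z_j$ to (an $m$-th root of unity)$\cdot z_{\tilde\sigma(j)}$, it merely permutes $(w_1,\dots,w_d)$, so $G$-invariance becomes symmetry in the $w_i$ refined only by the $t$-th-root structure of $\theta_d$ (note $E_d(w)=\theta_d^{\,t}$); this isolates the genuine difficulty as the symmetric-group case.

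The heart of the matter, and the step I expect to be the main obstacle, is manufacturing the approximating inner functions while keeping them $G$-invariant. The naive symmetrizations both fail: the $G$-average of a rational inner function is no longer unimodular on the boundary, whereas the $G$-product $\prod_{\sigma\in G}(B\circ\sigma)$ is inner but converges to $F^{|G|}$ rather than to $F$. My plan is therefore \emph{not} to symmetrize a finished inner function, but to run the classical one-variable Carath\'eodory / Schur--Carath\'eodory--Fej\'er construction $G$-equivariantly. Concretely, the Taylor data of $F$ at the origin is $G$-invariant, so the finite Carath\'eodory--Fej\'er extremal problems that produce the Blaschke-type matching functions have $G$-invariant data; the $G$-action preserves the affine set of extremal solutions, and I would argue that a $G$-invariant extremal solution can be selected (by uniqueness where the problem is nondegenerate, and by symmetrizing within the solution set otherwise), yielding finite rational inner functions that are themselves $G$-invariant and match $F$ to higher and higher order. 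Proving that an extremal interpolant can be chosen $G$-invariant \emph{and} inner simultaneously is precisely where the symmetric structure must be exploited, and is the crux; once it is in hand, normal-families and maximum-principle estimates give convergence uniformly on compacta.

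Finally I would descend: each $G$-invariant rational inner approximant equals $\psi_n\circ\theta$ for a rational inner $\psi_n\in\cA(\theta(\bD^d))$, and uniform-on-compacta convergence transfers through the proper map $\theta$, so $\psi_n\to f$ uniformly on compacta of $\theta(\bD^d)$. Splicing this with the dilation reduction through a diagonal subsequence completes the proof. The only ingredient requiring genuinely new input beyond classical one-variable theory and the algebraic facts already assembled in the paper is the equivariant solvability of the extremal interpolation problem described above.
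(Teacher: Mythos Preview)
Your overall architecture — pull back through $\theta$, produce $G$-invariant rational inner approximants to $F=f\circ\theta$ on $\bD^d$, then descend — is exactly what the paper does, and your reductions (dilation, polynomial approximation, $G$-averaging) are fine. The divergence, and the gap, is in the ``crux'' step.

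You propose to obtain the $G$-invariant inner approximants by solving Carath\'eodory--Fej\'er extremal problems equivariantly. This is problematic in $\bD^d$: unlike the one-variable case, extremal solutions to finite CF interpolation problems on the polydisc are \emph{not} in general rational inner functions, so there is no guarantee that the ``Blaschke-type matching functions'' you invoke actually exist; and even when inner solutions do exist, your selection argument (``symmetrize within the solution set'') fails because averaging over $G$ destroys the unimodularity on $\bT^d$ that you need. So as written the heart of the argument is not justified.

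The paper sidesteps all of this by going through the \emph{explicit construction} in Rudin's proof of Carath\'eodory approximation on $\bD^d$ (Theorem~5.5.1 of \cite{Rudin}). That proof builds the approximating rational inner function by a formula out of two ingredients: a polynomial $P$ close to $F$ with $\|P\|_\infty<1$, and a monomial $M$ of sufficiently high multidegree. The observation is simply that both can be chosen $G$-invariant — $P$ because the Taylor (or Ces\`aro) truncations of the $G$-invariant $F$ are $G$-invariant, and $M$ by taking, say, $(z_1\cdots z_d)^{mk}$ for large $k$ — and then the explicit formula automatically produces a $G$-invariant rational inner function. No extremal problem, no selection, no symmetrization of a finished inner function. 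Replace your third paragraph with this and the proof goes through.
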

For the proof, see \cite{BK}. In brief, it is enough to approximate the $H^\infty(\bD^d)$ function $f \circ \theta$ by $G$-invariant rational inner functions in $\cA(\bD^d)$.
Towards that end, we use the well-known Carath\'eodory approximation for $\bD^d$, \cite[Theorem 5.5.1]{Rudin} and choose the polynomial $P$ and the monomial $M$ (of sufficiently large degree) as in the proof of Theorem 5.5.1 of \cite{Rudin} so that $P$ and $M$ are $G$-invariant. Then the proof is immediate. See \cite{BK} for the structure of rational inner functions in $\cA(\theta(\bD^d))$. 

Let $\{t_\lambda:=\Gamma_\rho'^* e_\lambda:\lambda\in\Lambda\}$ be the orthonormal basis for $L^2(\partial \theta(\bD^d), \mu_{\rho, \theta})$
as in the discussion preceding Definition \ref{D:Toeplitz}. Let us consider the following subspace of $H^\infty (\theta(\bD^d))$
$$
\widetilde{H^\infty (\theta(\bD^d))}:=\{\varphi\in L^\infty(\partial\theta(\bD^d), \mu_{\rho, \theta}): \langle \varphi,t_\lambda\rangle=0 \mbox{ for }\lambda\in \Lambda\setminus\Lambda_+\}.
$$ We pause to note the following result.
\begin{lemma}\label{L:HRadial}
For every $\varphi$ in $\widetilde{H^\infty (\theta(\bD^d))}$, there exists a $\psi$ in $H^\infty(\theta(\bD^d))$ such that
\begin{align}\label{HRadial}
\lim_{r\to 1-}\psi\circ\theta(r\bm\zeta)=\varphi\circ\theta(\bm\zeta)\mbox{ for } \bm\zeta\in\bT^d \mbox{ $\nu$ almost everywhere}
\end{align}and $\|\varphi\|_{L^\infty(\partial\theta(\bD^d), \mu_{\rho, \theta})}=\|\psi\|_{H^\infty(\theta(\bD^d))}$.
\end{lemma}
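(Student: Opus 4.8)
The plan is to realize $\varphi$ as the boundary value of a Hardy-space function and then use the Poisson--Szeg\"o kernel to push the essential-sup bound from the boundary into the interior, reducing the whole statement to \cref{Radial} together with the reproducing property of $S_{\rho,\theta}$.

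First I would identify $\varphi$ with a member of $H_\rho^2(\theta(\bD^d))$. Since $\varphi\in L^\infty(\partial\theta(\bD^d),\mu_{\rho,\theta})\subset L^2(\partial\theta(\bD^d),\mu_{\rho,\theta})$ and $\langle\varphi,t_\lambda\rangle=0$ for every $\lambda\in\Lambda\setminus\Lambda_+$, the function $\varphi$ lies in the closed span of $\{t_\lambda:\lambda\in\Lambda_+\}$, which is exactly the image of $H_\rho^2(\theta(\bD^d))$ under the isometric radial-limit embedding of \cref{Radial}. Hence there is a unique $\psi\in H_\rho^2(\theta(\bD^d))$, holomorphic on $\theta(\bD^d)$, with $\psi^*=\varphi$. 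By the explicit description of the radial limit in \cref{Radial}, this already yields \eqref{HRadial}, namely $\lim_{r\to1-}\psi\circ\theta(r\bm\zeta)=\varphi\circ\theta(\bm\zeta)$ for $\nu$-a.e.\ $\bm\zeta\in\bT^d$.

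The substance is to prove $\psi\in H^\infty(\theta(\bD^d))$ with $\|\psi\|_\infty\le\|\varphi\|_{L^\infty}$, and for this I would show that the Poisson--Szeg\"o extension reproduces $\psi$, i.e. $\tilde{\varphi}(\boldsymbol p)=\psi(\boldsymbol p)$ for all $\boldsymbol p\in\theta(\bD^d)$. Writing $\mathcal P_\rho(\boldsymbol p,\boldsymbol\zeta)=|s_{\boldsymbol p}(\boldsymbol\zeta)|^2/S_{\rho,\theta}(\boldsymbol p,\boldsymbol p)$ with $s_{\boldsymbol p}=S_{\rho,\theta}(\cdot,\boldsymbol p)$, one has $\tilde{\varphi}(\boldsymbol p)=S_{\rho,\theta}(\boldsymbol p,\boldsymbol p)^{-1}\int_{\partial\theta(\bD^d)}\varphi\, s_{\boldsymbol p}\,\overline{s_{\boldsymbol p}}\,d\mu_{\rho,\theta}$. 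The key structural input is that for a fixed interior point $\boldsymbol p$ the kernel $s_{\boldsymbol p}$ extends continuously to $\overline{\theta(\bD^d)}$, so $s_{\boldsymbol p}\in H^\infty(\theta(\bD^d))$; consequently $\psi s_{\boldsymbol p}\in H_\rho^2(\theta(\bD^d))$ with boundary value $\varphi\, s_{\boldsymbol p}^*$, and the reproducing property gives $\int_{\partial\theta(\bD^d)}\varphi\, s_{\boldsymbol p}\,\overline{s_{\boldsymbol p}}\,d\mu_{\rho,\theta}=\langle \psi s_{\boldsymbol p},s_{\boldsymbol p}\rangle=(\psi s_{\boldsymbol p})(\boldsymbol p)=\psi(\boldsymbol p)\,S_{\rho,\theta}(\boldsymbol p,\boldsymbol p)$, whence $\tilde{\varphi}=\psi$. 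Since $\int_{\partial\theta(\bD^d)}\mathcal P_\rho(\boldsymbol p,\cdot)\,d\mu_{\rho,\theta}=\|\hat{s}_{\boldsymbol p}\|^2=1$, we conclude $|\psi(\boldsymbol p)|=|\tilde{\varphi}(\boldsymbol p)|\le\|\varphi\|_{L^\infty}\int_{\partial\theta(\bD^d)}\mathcal P_\rho(\boldsymbol p,\cdot)\,d\mu_{\rho,\theta}=\|\varphi\|_{L^\infty}$, so $\psi\in H^\infty(\theta(\bD^d))$ with $\|\psi\|_\infty\le\|\varphi\|_{L^\infty}$.

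The reverse inequality and the norm equality then come for free from \cref{Radial}: once $\psi\in H^\infty(\theta(\bD^d))$, that lemma asserts the radial-limit map is an isometry onto the $L^\infty$-subspace, so $\|\psi\|_{H^\infty(\theta(\bD^d))}=\|\psi^*\|_{L^\infty}=\|\varphi\|_{L^\infty}$. I expect the main obstacle to be the justification of $\tilde{\varphi}=\psi$ \emph{without} assuming $\psi$ bounded in advance; the clean route is precisely the continuity up to the boundary of the Szeg\"o kernel $s_{\boldsymbol p}$ for interior $\boldsymbol p$, which turns $\psi s_{\boldsymbol p}$ into a bona fide Hardy-space function on which the reproducing identity is valid. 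If the boundary continuity of $s_{\boldsymbol p}$ needs extra care, an alternative is to run the reproduction first on the dilates $\delta_r\psi$, which are holomorphic in a neighbourhood of $\overline{\theta(\bD^d)}$ and hence trivially lie in $H^\infty(\theta(\bD^d))$, and then pass to the limit $r\to1-$ using the properties of $\delta_r$ recorded in \cref{Hart-L2}.
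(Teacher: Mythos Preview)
Your argument is correct and takes a genuinely different route from the paper. You work \emph{intrinsically} on the quotient domain: having identified $\varphi$ as the radial-limit boundary value of some $\psi\in H_\rho^2(\theta(\bD^d))$ via \cref{Radial}, you bound $\psi$ pointwise by showing it coincides with the Poisson--Szeg\"o extension $\tilde\varphi$, using the reproducing property together with the fact that $s_{\boldsymbol p}\in H^\infty(\theta(\bD^d))$. The paper instead pulls back through $\theta$: it first establishes the measure-theoretic identity $\|\varphi\|_{L^\infty(\partial\theta(\bD^d),\mu_{\rho,\theta})}=\|\varphi\circ\theta\|_{L^\infty(\bT^d,\nu)}$ via mutual absolute continuity of $\nu$ and $|\ell_\rho|^2d\nu$, then applies the \emph{classical} Poisson extension on $\bD^d$ to $\varphi\circ\theta$ (whose Fourier support lies in $\mathbb Z_+^d$, so the extension is analytic and $G$-invariant), and finally invokes the analytic Chevalley--Shephard--Todd theorem to descend to $\psi\in H^\infty(\theta(\bD^d))$.

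What each approach buys: your route is more conceptual and avoids the detour through $\bD^d$, but it hinges on the boundary regularity $s_{\boldsymbol p}\in\cA(\theta(\bD^d))$; this is available in the paper (the proof of \cref{P: Norm equality} records continuity of $S_{\rho,\theta}$ on $\theta(\Omega)\times\overline{\theta(\Omega)}$, and conjugate symmetry then gives continuity of $s_{\boldsymbol p}$ on $\overline{\theta(\Omega)}$), though you should cite it explicitly rather than leave it as a ``key structural input''. The paper's route is more elementary in its ingredients (only the polydisc Poisson kernel and Chevalley--Shephard--Todd), and as a by-product isolates the norm identity $\|\varphi\|_{L^\infty}=\|\varphi\circ\theta\|_{L^\infty}$, which is of independent use. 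Your fallback via $\delta_r$ is unnecessary once $s_{\boldsymbol p}\in H^\infty$ is secured.
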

\begin{proof}
Given $\varphi$ as in the statement, we first show that 
\begin{align}\label{NormEqual}
\|\varphi\|_{L^\infty(\partial\theta(\bD^d), \mu_{\rho, \theta})} = \|\varphi \circ \theta \|_{L^\infty(\bT^d, \nu)}.
\end{align}That $\varphi\circ\theta$ is measurable follows from the fact that $\mu_{\rho,\theta}$ is the push-forward measure of $|\ell_\rho|^2d\nu$ and that the class of measurable functions with respect to the two measures $|\ell_\rho|^2d\nu$ and $\nu$ on $\bT^d$ are the same. For the norm equality, let $M = \|\varphi\|_{L^\infty(\partial\theta(\bD^d), \mu_{\rho, \theta})}$ and $E \subset \partial\theta(\bD^d)$ such that $|\varphi| \leq M$ on $\partial \theta(\bD^d) \setminus E$. Then $\mu_{\rho, \theta}(E)=0$, i.e., $(|\ell_\rho|^2 d\nu) (\theta^{-1}(E))=0$. Since $\nu(\mathcal{Z}(\ell_\rho)\cap\bT^d)=0$, the measures $\nu$ and $|\ell_\rho|^2d\nu$ are mutually absolutely continuous and therefore $\nu(\theta^{-1}(E)) = 0$ as well.
This implies that $\|\varphi \circ \theta\|_{L^\infty(\bT^d, \nu)}  \leq M$. Suppose $\|\varphi \circ \theta\|_{L^\infty(\bT^d, \nu)} < M$. Then the set
$$
V'=\{\boldsymbol{\zeta}\in\bT^d:\|\varphi \circ \theta\|_{L^\infty(\bT^d, \nu)}<|\varphi \circ \theta(\boldsymbol{\zeta})| \leq M \}
$$ has $\nu$ measure zero, equivalently, $(|\ell_\rho|^2 d\nu)(V')=0$.  Clearly, $V'$ is a $G$-invariant set and so, $\theta^{-1} (\theta(V')) = V'$. Thus $\mu_{\rho, \theta}(\theta(V'))=0$. In other words,
$|\varphi| \leq \|\varphi \circ \theta\|_{L^\infty(\bT^d, \nu)} < M$ on $\partial \theta(\bD^d)$ except for a $\mu_{\rho, \theta}$ measure zero set. This is a contradiction as $M = \|\varphi\|_{L^\infty(\partial\theta(\bD^d), \mu_{\rho, \theta})}$.

Now to obtain the analytic function as in the statement, we proceed as follows. We apply the Poisson extension to the $L^\infty(\bT^d,\nu)$ function 
\begin{align*}
\varphi \circ \theta = \sum_{\lambda \in \Lambda_+} \left \langle \varphi, t_\lambda \right \rangle t_\lambda \circ \theta \text{ on } \bT^d
\end{align*}to get an analytic function on $\bD^d$ as follows
\begin{align} \label{Eq: Poisson extn}
\cP_{\bD^d}[\varphi \circ \theta ](\boldsymbol{z})&= \sum_{\lambda \in \Lambda_+} \left \langle \varphi, t_\lambda \right \rangle t_\lambda \circ \theta (\boldsymbol{z}).
\end{align}
This is possible, see \cite[Chapter 2]{Rudin}. It is easy to check that $\cP_{\bD^d}[\varphi \circ \theta]$ is a $G$-invariant function in $H^\infty(\bD^d)$ and hence by the analytic version of the Chevalley-Shephard-Todd theorem, there exists $\psi \in H^\infty(\theta(\bD^d))$ such that
$$
\psi \circ \theta(\boldsymbol{z})=\cP_{\bD^d}[\varphi \circ \theta](\boldsymbol{z}) =\sum_{\lambda \in \Lambda_+} \left \langle \varphi, t_\lambda \right \rangle t_\lambda \circ \theta (\boldsymbol{z}) \text{ for } \boldsymbol{z}\in \bD^d.
$$
Therefore the radial limit of the $G$-invariant $H^\infty(\bD^d)$ function $\psi \circ \theta $ is 
\begin{align}\label{Eq:Function}
(\psi \circ \theta )^* = \sum_{\lambda \in \Lambda_+} \left \langle \varphi, t_\lambda \right \rangle t_\lambda \circ \theta = \varphi \circ \theta.
\end{align}
This proves \eqref{HRadial}. As for the norm equality as stated in Lemma \ref{L:HRadial} we note that
$$
\|\varphi\|_{L^\infty(\partial\theta(\bD^d), \mu_{\rho, \theta})} = \|\varphi \circ \theta \|_{L^\infty(\bT^d, \nu)} = \|(\psi \circ \theta )^*\|_{L^\infty(\bT^d, \nu)} = \|\psi \circ \theta\|_{H^\infty(\bD^d)} = \|\psi\|_{H^\infty(\theta(\bD^d))}
$$
by using \eqref{NormEqual} and \eqref{Eq:Function}.
\end{proof}
The Carath\'eodory approximation leads to some results related to the Banach algebra $H^\infty(\theta(\bD^d))$ which will be useful for the proof of the main result of this section. In the following, we use the weak$^*$ topology on $H^\infty(\theta(\bD^d))$ as induced from $L^\infty(\partial \theta (\mathbb D^d), \mu_{\rho, \theta})$ by virtue of \cref{Radial}. 
\begin{proposition} \label{wk-density} \leavevmode
\begin{enumerate}
\item[(i)] The closed norm-unit ball $(\cA(\theta(\bD^d)))_1$ of $\cA(\theta(\bD^d))$  is weak$^*$ dense in the closed norm-unit ball $(H^\infty(\theta(\bD^d)))_1$ of $H^\infty(\theta(\bD^d))$.
\item[(ii)] The subspace $H^\infty (\theta(\bD^d))$ is weak$^*$ closed in $L^\infty(\partial \theta(\bD^d), \mu_{\rho, \theta})$.
\item[(iii)] The closed ball $(H^\infty(\theta(\bD^d)))_1$ is weak$^*$ compact and weak$^*$ metrizable.
\end{enumerate}
\end{proposition}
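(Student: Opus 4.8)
The plan is to prove the three assertions in the order (ii), (iii), (i), since (iii) rests on (ii) while (i) is essentially self-contained. Throughout I regard $H^\infty(\theta(\bD^d))$ and $\cA(\theta(\bD^d))$ as sitting inside $L^\infty(\partial\theta(\bD^d),\mu_{\rho,\theta})$ through the boundary-value embedding of \cref{Radial}, and I use the orthonormal basis $\{t_\lambda:\lambda\in\Lambda\}$ of $L^2(\partial\theta(\bD^d),\mu_{\rho,\theta})$ with $\{t_\lambda:\lambda\in\Lambda_+\}$ spanning $H_\rho^2(\theta(\bD^d))$.

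For (ii), I would first invoke \cref{L:HRadial} to identify the image of $H^\infty(\theta(\bD^d))$ with the space $\widetilde{H^\infty(\theta(\bD^d))}$: the lemma exhibits every element of $\widetilde{H^\infty(\theta(\bD^d))}$ as the boundary value of an $H^\infty$ function, while conversely any $\psi\in H^\infty(\theta(\bD^d))\subset H_\rho^2(\theta(\bD^d))$ has boundary values orthogonal to $t_\lambda$ for $\lambda\in\Lambda\setminus\Lambda_+$. Hence
\[
H^\infty(\theta(\bD^d))=\widetilde{H^\infty(\theta(\bD^d))}=\bigcap_{\lambda\in\Lambda\setminus\Lambda_+}\ker\Phi_\lambda,\qquad \Phi_\lambda(\varphi):=\int_{\partial\theta(\bD^d)}\varphi\,\overline{t_\lambda}\,d\mu_{\rho,\theta}.
\]
Since $\overline{t_\lambda}\in L^1(\partial\theta(\bD^d),\mu_{\rho,\theta})$, each $\Phi_\lambda$ is weak$^*$ continuous, so each $\ker\Phi_\lambda$ is weak$^*$ closed and therefore so is their intersection. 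This settles (ii) without any appeal to the Krein--Smulian theorem.

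Part (iii) is then formal. The base $\partial\theta(\bD^d)$ is a compact metric space and $\mu_{\rho,\theta}$ a finite regular Borel measure, so $L^1(\partial\theta(\bD^d),\mu_{\rho,\theta})$ is separable; consequently the weak$^*$ topology is metrizable on norm-bounded subsets of the dual $L^\infty(\partial\theta(\bD^d),\mu_{\rho,\theta})$, in particular on $(H^\infty(\theta(\bD^d)))_1$. For compactness, Banach--Alaoglu gives that the closed unit ball of $L^\infty$ is weak$^*$ compact, and $(H^\infty(\theta(\bD^d)))_1$ is the intersection of that ball with the weak$^*$ closed subspace $H^\infty(\theta(\bD^d))$ from (ii); being weak$^*$ closed inside a weak$^*$ compact set, it is weak$^*$ compact.

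The real content is (i), and I expect the passage from interior convergence to boundary weak$^*$ convergence to be the main obstacle. Given $f$ in the unit ball of $H^\infty(\theta(\bD^d))$, \cref{Cara-approx} supplies rational inner functions $g_n\in\cA(\theta(\bD^d))$, each with $\|g_n\|_\infty=1$, converging to $f$ uniformly on compact subsets of $\theta(\bD^d)$. Since $\|g_n\|_\infty\le1$ and the $\mathbb C$-span of $\{\overline{t_\lambda}:\lambda\in\Lambda\}$ is dense in $L^1$ (the conjugate basis is again orthonormal in $L^2$, and $L^2\subset L^1$ densely), it suffices to prove $\langle g_n,t_\lambda\rangle\to\langle f,t_\lambda\rangle$ for every $\lambda$. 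For $\lambda\in\Lambda\setminus\Lambda_+$ both sides vanish since $g_n,f\in H_\rho^2(\theta(\bD^d))$. For $\lambda\in\Lambda_+$ I would run the dilation argument of \cref{Hart-L2}, working in $H_\rho^2(\theta(\bD^d))$: using that $\delta_r$ is self-adjoint with $\delta_r\to\operatorname{Id}$ strongly, write
\[
\langle g_n,t_\lambda\rangle-\langle f,t_\lambda\rangle=\big(\langle\delta_r g_n,t_\lambda\rangle-\langle\delta_r f,t_\lambda\rangle\big)+\langle g_n,(\operatorname{Id}-\delta_r)t_\lambda\rangle-\langle f,(\operatorname{Id}-\delta_r)t_\lambda\rangle.
\]
The last two terms are at most a constant times $\|(\operatorname{Id}-\delta_r)t_\lambda\|$ (because $\|g_n\|,\|f\|\le\mu_{\rho,\theta}(\partial\theta(\bD^d))^{1/2}$ in $H_\rho^2$), hence small uniformly in $n$ once $r$ is near $1$. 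For that fixed $r<1$ the boundary values of $\delta_r g_n$ are the interior evaluations $r^{2m_0}g_n(r^{\boldsymbol m}\cdot)$ taken over the compact set $\{r^{\boldsymbol m}\boldsymbol q:\boldsymbol q\in\partial\theta(\bD^d)\}\subset\theta(\bD^d)$, so the first bracket tends to $0$ as $n\to\infty$ by uniform convergence on compacta. An $\varepsilon/3$ estimate then gives $\langle g_n,t_\lambda\rangle\to\langle f,t_\lambda\rangle$, hence $g_n\to f$ weak$^*$, proving that $(\cA(\theta(\bD^d)))_1$ is weak$^*$ dense in $(H^\infty(\theta(\bD^d)))_1$.
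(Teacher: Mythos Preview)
Your argument is correct, and while it reaches the same destination as the paper, two of the three parts take genuinely different paths.

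For (ii), the paper argues sequentially: it takes $\varphi_j\to\varphi$ weak$^*$ and checks that the Fourier coefficients $\langle\varphi,t_\lambda\rangle$ vanish for $\lambda\notin\Lambda_+$, then invokes \cref{L:HRadial}. Your approach---writing $H^\infty$ as $\bigcap_{\lambda\notin\Lambda_+}\ker\Phi_\lambda$ with each $\Phi_\lambda$ weak$^*$ continuous---is cleaner and in fact more rigorous: the paper's sequential check does not by itself establish weak$^*$ closedness (one would still need Krein--Smulian together with metrizability on bounded sets), whereas your argument gives it outright.

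For (i), the paper pulls everything back to $\bT^d$ via $\theta$: it uses the Cauchy integral formula to show $\int_{\bT^d}(\tilde g_j\circ\theta)\psi\,d\nu\to\int_{\bT^d}(\varphi\circ\theta)\psi\,d\nu$ first for Laurent polynomials $\psi$, then by density for all $\psi\in L^1(\bT^d,\nu)$, and finally pushes forward to $L^1(\partial\theta(\bD^d),\mu_{\rho,\theta})$. Your route stays intrinsic to the quotient domain, recycling the dilation operators $\delta_r$ and their self-adjointness from \cref{Hart-L2} to trade boundary integrals for interior evaluations on the compact set $\theta(r\bT^d)$. Both methods convert compact convergence in the interior into weak$^*$ convergence on the boundary; the paper's is more classical, yours exploits machinery already built in \S\ref{smallH} and avoids the detour through $\bT^d$.

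Part (iii) is handled identically in both.
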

\begin{proof} 

(i) Let $\varphi \in (H^\infty(\theta(\bD^d)))_1$. Apply \cref{Cara-approx}, to get sequence $\{\tilde{g}_j\}$ of rational inner functions in $\cA(\theta(\bD^d))$ such that $\tilde{g}_j$ converges to $\varphi$ uniformly on compacta in $\theta(\bD^d)$. Then $g_j = \tilde{g}_j \circ \theta $ converges to $\varphi \circ \theta$ uniformly on compacta in $\bD^d$.
For any polynomial $\psi$ in $\boldsymbol{\zeta}$ and $\overline{\boldsymbol{\zeta}}$, the Cauchy integral formula yields that 
\begin{align*}
\int_{\bT^d} (\widetilde{g_j} \circ \theta ) \psi d\nu \rightarrow \int_{\bT^d} (\varphi \circ \theta ) \psi d\nu.
\end{align*}
By density of the polynomials in $\boldsymbol{\zeta}$ and $\overline{\boldsymbol{\zeta}}$ in $L^1(\bT^d, \nu)$, for any $\psi \in L^1(\bT^d, \nu)$ we have
 \begin{align}\label{L1-convergence}
\int_{\bT^d} (\widetilde{g_j} \circ \theta) \psi d\nu \rightarrow \int_{\bT^d} (\varphi \circ \theta) \psi d\nu.
\end{align}
Therefore for any $\psi \in L^1(\partial\theta(\bD^d), \mu_{\rho, \theta}) $,
\begin{align*}
\int_{\partial \theta(\bD^d)} \widetilde{g_j} \psi d\mu_{\rho, \theta} &= \int_{\bT^d} (\widetilde{g_j} \circ \theta ) (\psi \circ \theta ) |\ell_\rho|^2 d\nu \\
               & \rightarrow \int_{\bT^d} (\varphi \circ \theta) (\psi \circ \theta) |\ell_\rho|^2 d\nu \ (\text{by}\ \eqref{L1-convergence}) = \int_{\partial \theta(\bD^d)} \varphi \psi d\mu_{\rho, \theta}.
\end{align*}

So, $\widetilde{g_j} \xrightarrow{\text{weak}^*} \varphi $ such that $\widetilde{g_j} \in \cA(\theta(\bD^d))$. This proves the weak$^*$ density of $(\cA(\theta(\bD^d)))_1$ in $(H^\infty(\theta(\bD^d)))_1$.

(ii) To prove the second part, let us consider a sequence $\{\varphi_j\}$ in $H^\infty(\theta(\bD^d))$ such that $\varphi_j \rightarrow \varphi $ in weak$^*$ topology.  Here Lemma \ref{Radial} is used to view the functions $\varphi_j$ as members of $L^\infty(\partial\theta(\bD^d),\mu_{\rho,\theta})$.  The weak$^*$ convergence therefore means that
\begin{align}\label{WTM}
\int_{\partial\theta(\bD^d)} \varphi_j \psi d\mu_{\rho, \theta} \to \int_{\partial\theta(\bD^d)} \varphi \psi d\mu_{\rho, \theta}\quad\mbox{ for every } \psi \in L^1(\partial\theta(\bD^d),\mu_{\rho,\theta}).
\end{align} This ensures that the $L^\infty(\partial\theta(\bD^d),\mu_{\rho,\theta})$ function $\varphi$ has not negative Fourier coefficients when considered as an $L^2(\partial\theta(\bD^d),\mu_{\rho,\theta})$ function. Indeed, recall the orthonormal basis $\{t_\lambda:=\Gamma_\rho'^* e_\lambda:\lambda\in\Lambda\}$ for $L^2(\partial \theta(\bD^d), \mu_{\rho, \theta})$
from the discussion preceding Definition \ref{D:Toeplitz}. Using \eqref{WTM} we note that for every $\lambda\in\Lambda$,
\begin{align*}
\left \langle \varphi, t_\lambda \right \rangle = \int_{\partial \theta(\bD^d)} \varphi \overline{t_\lambda} d\mu_{\rho, \theta} 
                                             = \lim_{j \to \infty} \int_{\partial \theta(\bD^d)} \varphi_j \overline{t_{\lambda}} d\mu_{\rho, \theta} = \lim_{j\to \infty} \langle  \varphi_j , t_\lambda \rangle.
\end{align*}
Since $\langle\varphi_j,t_\lambda\rangle=0$ if $\lambda\in\Lambda\setminus\Lambda_+$, the same is true for $\varphi$ as the computation above shows.


%
%
%
%
%
 Therefore, $\varphi \in \widetilde{H^\infty (\theta(\bD^d))}$. Hence, by \cref{L:HRadial}, we see that $\varphi$ is radial limit of some $\psi \in H^\infty(\theta(\bD^d))$ with $ \|\psi\|_{H^\infty(\theta(\bD^d))} =\|\varphi\|_{L^\infty(\partial \theta(\bD^d), \mu_{\rho, \theta})}$.
 Thus $H^\infty(\theta(\bD^d))$ is weak$^*$ closed.

(iii) To prove the last part, we note that $(H^\infty(\theta(\bD^d)))_1$ is weak$^*$ closed subset of the unit ball of $L^\infty(\partial \theta(\bD^d), \mu_{\rho, \theta})$ which is weak$^*$ compact and hence $(H^\infty(\theta(\bD^d)))_1$ is weak$^*$ compact. A general result in the theory of Banach spaces says that a Banach space $X$ is separable if and only if the closed norm-unit ball of $X^*$ is metrizable. Therefore, $L^1(\partial \theta(\bD^d), \mu_{\rho, \theta})$ being separable, the unit ball of $L^\infty(\partial \theta(\bD^d), \mu_{\rho, \theta})$ is weak$^*$ metrizable and hence so is $(H^\infty(\theta(\bD^d)))_1$.
\end{proof}
The proof of part (i) actually provides us a deeper implication.
\begin{proposition}\label{inner-density}
The rational inner functions in $\cA(\theta(\bD^d))$ generate the Banach algebra $H^\infty(\theta(\bD^d))$ in the restriction of the weak$^*$ topology of $L^\infty(\partial \theta(\bD^d), \mu_{\rho, \theta})$.
\end{proposition}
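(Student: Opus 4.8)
The plan is to extract the desired conclusion directly from the proof of part (i) of \cref{wk-density}, which in fact establishes something stronger than its own statement. Let $\mathcal R$ denote the collection of rational inner functions in $\cA(\theta(\bD^d))$. First I would record the easy containment: every member of $\mathcal R$ is unimodular on $\partial\theta(\bD^d)$ and hence has $H^\infty$-norm equal to $1$, so $\mathcal R\subset(H^\infty(\theta(\bD^d)))_1$. Since part (iii) of \cref{wk-density} shows that $(H^\infty(\theta(\bD^d)))_1$ is weak$^*$ compact, in particular weak$^*$ closed, the weak$^*$ closure of $\mathcal R$ is contained in this unit ball.

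For the reverse containment I would simply reread the argument proving part (i): given $\varphi\in(H^\infty(\theta(\bD^d)))_1$, \cref{Cara-approx} furnishes a sequence $\{\widetilde g_j\}\subset\mathcal R$ converging to $\varphi$ uniformly on compacta of $\theta(\bD^d)$, and the integral computation there (passing through $|\ell_\rho|^2d\nu$ and the density of trigonometric polynomials in $L^1(\bT^d,\nu)$) shows $\widetilde g_j\xrightarrow{\text{wk}^*}\varphi$. Thus $(H^\infty(\theta(\bD^d)))_1$ is contained in the weak$^*$ closure of $\mathcal R$, and combining the two inclusions gives that the weak$^*$ closure of $\mathcal R$ is exactly the closed unit ball of $H^\infty(\theta(\bD^d))$.

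To pass from the unit ball to the whole algebra I would invoke homogeneity: for an arbitrary $\varphi\in H^\infty(\theta(\bD^d))$ with $M=\|\varphi\|_\infty$, the element $\varphi/M$ lies in the unit ball, hence is a weak$^*$ limit of elements of $\mathcal R$, so $\varphi$ is a weak$^*$ limit of the scalar multiples $M\widetilde g_j$. Therefore $\varphi$ belongs to the weak$^*$-closed linear span of $\mathcal R$. Since $\mathcal R\subset H^\infty(\theta(\bD^d))$ and $H^\infty(\theta(\bD^d))$ is itself weak$^*$ closed by part (ii) of \cref{wk-density}, the weak$^*$-closed linear span of $\mathcal R$, and a fortiori the weak$^*$-closed subalgebra it generates, is trapped between $\mathcal R$ and $H^\infty(\theta(\bD^d))$, forcing equality.

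I do not anticipate a genuine obstacle, since all the analytic weight has already been carried by \cref{Cara-approx} and the weak$^*$ computation in part (i) of \cref{wk-density}; the only points requiring care are the normalization step (ensuring that scaling does not leave the relevant class, which is immediate) and the observation that the object being generated sits inside the weak$^*$-closed set $H^\infty(\theta(\bD^d))$, so that the double inclusion closes up. The one conceptual subtlety worth flagging is that uniform-on-compacta convergence of the $\widetilde g_j$ does yield weak$^*$ convergence against every $L^1(\partial\theta(\bD^d),\mu_{\rho,\theta})$ functional; this is precisely the content already verified in the proof of part (i), so I would invoke it rather than reprove it.
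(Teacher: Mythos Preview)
Your proposal is correct and follows exactly the paper's approach: the paper's proof of \cref{inner-density} is the single observation that the approximants $\widetilde g_j$ produced in the proof of part (i) of \cref{wk-density} via \cref{Cara-approx} are already rational inner functions, so that argument in fact yields the stronger conclusion. Your write-up simply makes explicit the easy containment $\mathcal R\subset H^\infty(\theta(\bD^d))$ and the homogeneity step, which the paper leaves implicit.
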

\subsection{The projective objects}
 We produce projective objects via the following theorem related to the extension group.
\begin{thm} \label{NormalExt}
 Suppose $\cN$ is also a Hilbert module over $C(\partial \theta(\bD^d))$. If $\cN$, as a  Hilbert module over $\cA(\theta(\bD^d))$, is normal then for every normal Hilbert module $\cK$ over $\cA(\theta(\bD^d))$ we have
$$\operatorname{Ext}_{\mathfrak{N}} (\cK, \cN)= 0 \text{ and } \operatorname{Ext}_{\mathfrak{N}} (\cN, \cK)= 0 .$$
\end{thm}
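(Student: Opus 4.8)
The plan is to prove both vanishing statements through the cohomological description recorded just after the definition of the normal category, namely $\operatorname{Ext}_{\mathfrak N}(\cK,\cN)=\mathfrak A_{\mathfrak N}/\mathfrak B_{\mathfrak N}$, the normal analogue of \cref{cohom}. In each of the two orderings it then suffices to show that an arbitrary normal co-cycle is a co-boundary: for injectivity one takes $\sigma\colon\cA(\theta(\bD^d))\times\cK\to\cN$ and seeks a bounded $L\colon\cK\to\cN$ with $\sigma(f,k)=f\cdot L(k)-L(f\cdot k)$, while for projectivity one takes $\sigma\colon\cA(\theta(\bD^d))\times\cN\to\cK$ and seeks the analogous $L\colon\cN\to\cK$. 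The whole point is that $\cN$ carries the richer structure of a $C(\partial\theta(\bD^d))$-module, which I intend to exploit through the distinguished coordinate $\theta_d=(z_1\cdots z_d)^{m/t}$.

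The two structural inputs I would isolate first are these. First, because $\theta_d$ is unimodular on the \v Silov boundary $\theta(\bT^d)$ and $\cN$ is a $C(\partial\theta(\bD^d))$-module, the action $V_{p_d}$ of $p_d$ on $\cN$ is a genuine \emph{unitary}, so $V_{p_d}^{-n}$ is available with $\|V_{p_d}^{-n}\|=1$. Second, normality supplies a substitute for purity: since the orthonormal basis $\{t_\lambda\}$ consists of Laurent polynomials, $p_d^n\to 0$ in the weak$^*$ topology of $L^\infty(\partial\theta(\bD^d),\mu_{\rho,\theta})$ (a Riemann--Lebesgue computation after pulling back to $\bT^d$ via $\mu_{\rho,\theta}$), and therefore, using \cref{Radial}, the action $T_{p_d}^n$ on \emph{any} normal module tends to $0$ in the weak operator topology. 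With these in hand I would run the Banach-limit averaging of the earlier Proposition on unitary modules, but now placing the averaging on the $C^*$-module side $\cN$: define $L$ through $\langle Lk,n\rangle=\operatorname{LIM}\{\langle V_{p_d}^{-j}\sigma(p_d^{\,j},k),n\rangle\}_j$ (respectively $\langle Lh,k\rangle=\operatorname{LIM}\{\langle\sigma(p_d^{\,j},V_{p_d}^{-j}h),k\rangle\}_j$). Boundedness is immediate from $\|p_d^{\,j}\|_{\infty,\overline{\theta(\bD^d)}}=1$ together with the unitarity of $V_{p_d}$, and the co-cycle identity plus translation invariance of $\operatorname{LIM}$ realize $\sigma$ as a co-boundary \emph{on the generator} $p_d$, i.e. $V_{p_d}L-LU_{p_d}=\sigma(p_d,\cdot)$.

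The remaining task is to upgrade $p_d$-equivariance of the defect $Tk:=L(f\cdot k)-f\cdot L(k)-\sigma(f,k)$ to full $\cA(\theta(\bD^d))$-equivariance, i.e. to force $T\equiv 0$. For the distinguished direction the intertwining $T_{p_d}T=TV_{p_d}$ is produced exactly as in the earlier Proposition; the non-unimodular coordinates $p_1,\dots,p_{d-1}$ I would then bring under control through the Brown--Halmos relations of \cref{Brown-Halmos}, which express $T_{p_j}$ in terms of $T_{p_d}$, combined with the Carath\'eodory approximation \cref{Cara-approx} and the weak$^*$ density results \cref{wk-density} and \cref{inner-density}: these let me pass from $\cA(\theta(\bD^d))$ to its weak$^*$ closure $H^\infty(\theta(\bD^d))$ and feed on normality of $\cK$ and $\cN$ to propagate the identity to all of the algebra.

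The step I expect to be the genuine obstacle is precisely the vanishing $T\equiv 0$. In the earlier Proposition this was forced by purity of the target, but that tool is \emph{unavailable here}: when $\cK=L^2(\partial\theta(\bD^d),\mu_{\rho,\theta})$ the operator $T_{p_d}$ is unitary, so $T_{p_d}^{*n}$ does not tend strongly to $0$, and the naive estimate $\|T^\ast g\|\le\|T^\ast\|\,\|T_{p_d}^{*n}g\|$ collapses. The substitute I propose is conceptual: a bounded normal co-cycle on $\cA(\theta(\bD^d))$ valued in, or emanating from, a $C(\partial\theta(\bD^d))$-module should be inner because the boundary $C^*$-algebra $C(\partial\theta(\bD^d))$ is amenable, so that after extending the cohomological data to the weak$^*$ closure the derivation splits. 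Concretely, I would realize this splitting by averaging the bounded retraction against the unitary orbit $\{V_{p_d}^{\,n}\}$ on the $\cN$-side and then using the weak-operator nullity of $T_{p_d}^n$ on the normal module to annihilate the defect; making this averaging converge and interact correctly with the remaining coordinates, \emph{symmetrically} for both orderings $(\cK,\cN)$ and $(\cN,\cK)$, is where the analytic work concentrates.
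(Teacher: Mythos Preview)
Your proposal contains a genuine gap, and you in fact locate it yourself: averaging only over the single unitary direction $\{p_d^{\,n}\}$ produces the coboundary identity merely at $p_d$, and you have no working mechanism to propagate it to the remaining generators. The two devices you suggest for this propagation do not do the job. The Brown--Halmos relations of \cref{Brown-Halmos} are statements about the \emph{concrete} Toeplitz operators on $H_\rho^2(\theta(\bD^d))$; there is no reason the module actions on an abstract normal module $\cK$ should satisfy $T_{p_{d-j}}^* T T_{p_d}^t = T T_{p_j}$, so you cannot express $p_1,\dots,p_{d-1}$ through $p_d$ on the level of general normal modules. And replacing strong purity $\|T_{p_d}^{*n}g\|\to 0$ by the weak-operator nullity $T_{p_d}^{\,n}\to 0$ that normality gives is too weak: from $T_{p_d}T=TV_{p_d}$ you obtain $T^*T_{p_d}^{*n}=V_{p_d}^{*n}T^*$, hence $\|T^*g\|=\|T^*T_{p_d}^{*n}g\|$, but weak convergence of $T_{p_d}^{*n}g$ to $0$ does not force this norm to vanish.

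The idea you gesture at in the last paragraph, amenability, is exactly what the paper exploits, but at the right level of generality. Rather than averaging over the cyclic semigroup $\{p_d^{\,n}\}$, the paper averages over the entire \emph{abelian semigroup} $\mathfrak I(\theta(\bD^d))$ of inner functions in $H^\infty(\theta(\bD^d))$, using an invariant mean $\mathbb M$ on $B(\mathfrak I(\theta(\bD^d)))$. The $C(\partial\theta(\bD^d))$-module structure on $\cN$ supplies, for each inner $\psi$, a bounded operator $T_{\bar\psi}^{(\cN)}$ playing the role your $V_{p_d}^{-n}$ played; one then defines $T\in\cB(\cK,\cN)$ via the trace duality $\operatorname{tr}(TC)=\mathbb M\big(\psi\mapsto\operatorname{tr}(T_{\bar\psi}^{(\cN)}\eta(\psi,\cdot)C)\big)$. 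Translation invariance of $\mathbb M$ and the cocycle identity then give $T_\xi^{(\cN)}T-TT_\xi^{(\cK)}=\eta(\xi,\cdot)$ for \emph{every} inner $\xi$ in one stroke, so no separate ``upgrade'' step is needed; \cref{inner-density} then extends this to all of $H^\infty(\theta(\bD^d))$. The second vanishing $\operatorname{Ext}_{\mathfrak N}(\cN,\cK)=0$ is obtained by a category-theoretic duality rather than a symmetric analytic argument. In short: your Banach-limit idea is correct in spirit, but you must average over the full inner semigroup, not a single generator.
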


\begin{proof}
The weak$^*$ density of the unit ball of $\cA(\theta(\bD^d))$ in the unit ball of $H^\infty(\theta(\bD^d))$ (by Proposition \ref{inner-density}) implies that that the module action of $\cA(\theta(\bD^d))$ on a normal Hilbert module can be uniquely extended to $H^\infty(\theta(\bD^d))$ without increasing the module bound. Therefore, it is enough to prove that 
$$\operatorname{Ext}_{{\mathfrak{N}}_\infty} (\cK,\cN)= 0 \text{ and } \operatorname{Ext}_{{\mathfrak{N}}_\infty} (\cN, \cK)= 0,$$
where $\cN$ and $\cK$ are regarded as Hilbert modules over the algebra $H^\infty(\theta(\bD^d))$. To prove the theorem, take a co-cycle $\eta : H^\infty(\theta(\bD^d))\times \cK \rightarrow \cN $ in $\mathfrak{A}_{\mathfrak{N}}$. We need to show that $\eta$ is a co-boundary, i.e., we need to produce a bounded linear operator $T:\cK\to\cN$ such that $\eta(\psi,\cdot)=T_\psi T-TT_\psi$ for every $\psi$ in $H^\infty(\theta(\bD^d))$. Let $\cB_1(\cN, \cK)$ be the class of trace class operators from $\cN$ to $\cK$ and $\cB(\cK, \cN)$  be the set of bounded linear operators from $\cK$ to $\cN$. To produce the desired operator $T$, we shall use the fact that $ \cB(\cK, \cN) \equiv \left( \cB_1(\cN, \cK) \right)^* $ via the map $T \mapsto L_T$ where $L_T$ is the continuous linear functional given by
$$ L_T(C)=\operatorname{tr}(TC) \text{ for }C \in \cB_1(\cN, \cK) .$$ 

Consider the Abelian semi-group $\mathfrak{I}(\theta(\bD^d))$ of inner functions on $\theta(\bD^d)$, i.e., functions that are bounded analytic in $\theta(\bD^d)$ with their boundary values on $\partial \theta(\bD^d)$ unimodular almost everywhere with respect to $\mu_{\rho, \theta}$. Let $B(\mathfrak{I}(\theta(\bD^d)))$ denote the space of all complex-valued bounded functions on $\mathfrak{I}(\theta(\bD^d))$. For $\beta \in \mathfrak{I}(\theta(\bD^d))$ and $f\in B(\mathfrak{I}(\theta(\bD^d)))$, we shall denote by $f_\beta$ the bounded function on $\mathfrak{I}(\theta(\bD^d))$ defined by
$$
f_\beta:\psi\mapsto f(\beta\psi).
$$ We shall use what is referred to as the {\em invariant means} of $B(\mathfrak{I}(\theta(\bD^d)))$. These are bounded linear functionals on $B(\mathfrak{I}(\theta(\bD^d)))$ such that 
$$
\mathbb{M}(f_\beta) = \mathbb{M}(f)\quad\mbox{for every }f\in B(\mathfrak{I}(\theta(\bD^d))) \mbox{ and }\beta \in \mathfrak{I}(\theta(\bD^d)).
$$ 
The existence of invariant means is known from \cite{Day}. Define a linear functional $L$ on $\cB_1(\cN, \cK)$ by
$$
L(C):=\mathbb{M} \left(\psi\mapsto \operatorname{tr}( T_{\bar{\psi}}\ \eta(\psi, \cdot) C )  \right)=\mathbb M\big(\psi\mapsto\operatorname{tr}(T_{\bar{\psi}}\ \eta(\psi, \cdot)C)\big).
$$The linearity and boundedness of the trace function and $\mathbb M$ imply the boundedness of $L$ defined as above. And therefore there is a $T$ in $\cB(\cK,N)$ such that
\begin{align}\label{L}
L(C)=\operatorname{tr}(TC)=\mathbb M\big(\psi\mapsto\operatorname{tr}(T_{\bar{\psi}}\ \eta(\psi, \cdot)C)\big).
\end{align}
 The invariance of $\mathbb M$ will be used in what follows. Note that in \eqref{L} (or in the displayed equation above it), the operator $T_\psi$ is the module operator on $\cN$. In the following computations, however, we shall have use of $T_\psi$ as a module operator on both the Hilbert modules $\cN$ and $\cK$. To increase readability, we use a superscript to distinguish these two instances. For $\xi$ in $\mathfrak{I}(\theta(\bD^d))$, we carry out the following computation where we use the linearity and the invariance under commutation of the trace function.
 \begin{align*}
\operatorname{tr} ( (T_{\xi}^{(\cN)} T - T T_{\xi}^{(\cK)})C)=
 \operatorname{tr} (T_{\xi}^{(\cN)} T C) -  \operatorname{tr} ( T T_{\xi}^{(\cK)} C) = \operatorname{tr}( TC T_{\xi}^{(\cN)}) - \operatorname{tr}(T T_{\xi}^{(\cK)}C).
\end{align*}Now we apply \eqref{L} to get the right hand side equal to
\begin{align}\label{SecondEqn}
\notag&\mathbb{M}\big(\psi\mapsto \operatorname{tr}( T_{\bar{\psi}}^{(\cN)}\eta(\psi, \cdot) CT_{\xi}^{(\cN)})\big) -\mathbb{M}\big(\psi\mapsto \operatorname{tr}( T_{\bar{\psi}}^{(\cN)}\eta(\psi, \cdot)T_{\xi}^{(\cK)}C)\big)\\
= & \; \mathbb{M}\big(\psi\mapsto \operatorname{tr}( T_{\bar{\psi} \xi}^{(\cN)} \eta(\psi, \cdot) C)\big)- \mathbb{M}\big(\psi\mapsto \operatorname{tr}( T_{\bar{\psi}}^{(\cN)}\eta(\psi, \cdot)T_{\xi}^{(\cK)} C )\big),
\end{align} 
where to obtain the equality we again used the invariance of the trace function under commutation. Since $\eta$ is a co-cycle, it has the distribution property 
$$
\eta(\psi \xi, \cdot) = T_{\psi}^{(\cN)} \eta(\xi, \cdot) + \eta(\psi, \cdot) T_{\xi}^{(\cK)}
$$ for every $\psi,\xi$ in $\mathfrak{I}(\theta(\bD^d))$. Upon multiplying on the left of the distribution property by the adjoint of $T_\psi^{(\cN)}$ and rearranging we get
\begin{align*}
T_{\bar{\psi}}^{(\cN)} \eta(\psi, \cdot) T_{\xi}^{(\cK)} = T_{\bar{\psi}}^{(\cN)} \eta(\psi \xi, \cdot) - \eta( \xi, \cdot).
\end{align*}
We plug this in the second term of \eqref{SecondEqn} to get
\begin{align}\label{FinCon}
\notag &\operatorname{tr}(T_{\xi}^{(\cN)} T - T T_{\xi}^{(\cK)} C) \\ \notag
&= \mathbb{M}\big(\psi\mapsto \operatorname{tr}( T_{\bar{\psi} \xi}^{(\cN)} \eta(\psi, \cdot) C)\big)- \mathbb{M}\big(\psi\mapsto \operatorname{tr}(T_{\bar{\psi}}^{(\cN)} \eta(\psi \xi, \cdot) C )\big) +  \mathbb{M}\big(\psi\mapsto \operatorname{tr}( \eta( \xi, \cdot) C )\big) \\
&= \operatorname{tr}(\eta( \xi, \cdot) C ).
\end{align}
The second equality in the above computation is obtained using the translation invariance of $\mathbb{M}$. Indeed, let $f\in B(\mathfrak{I}(\theta(\bD^d)))$ be given by $ f(\psi)= \operatorname{tr}( T_{\bar{\psi} \xi}^{(\cN)} \eta(\psi, \cdot) C)$ where $\xi\in \mathfrak{I}(\theta(\bD^d))$ is fixed. Then note that
$$
f_\xi(\psi)=f(\xi\psi)=\operatorname{tr}( T_{\overline{\xi\psi} \xi}^{(\cN)} \eta(\xi\psi, \cdot) C).
$$Since $\mathbb M(f)=\mathbb M(f_\xi)$, we have
$$
\mathbb{M}\big(\psi\mapsto \operatorname{tr}( T_{\bar{\psi} \xi}^{(\cN)} \eta(\psi, \cdot) C)\big)= \mathbb{M}\big(\psi\mapsto \operatorname{tr}(T_{\bar{\psi}}^{(\cN)} \eta(\psi \xi, \cdot) C )\big).
$$ From \eqref{FinCon} we therefore conclude 
$$
T_{\xi}^{(\cN)} T - T T_{\xi}^{(\cK)} = \eta(\xi, \cdot)
$$ for every inner function $\xi$. The density theorem in \cref{inner-density} ensures the same for every $\xi \in H^\infty(\theta(\bD^d))$. This means $\eta$ is a co-boundary in $\mathfrak{B}_{\mathfrak{N}}$ and hence $\operatorname{Ext}_{\mathfrak{N}} (\cK, \cN)= 0.$

Let $\overline{\mathfrak{N}}$ be the category of normal Hilbert modules over $\overline{\cA(\theta(\bD^d))}$, the set of complex conjugates of functions in $\cA(\theta(\bD^d))$. We define $\cN_*$ to be a object in $\overline{\mathfrak{N}}$ with the module action $\bar{f} \cdot h = T_f^*h$ for $f \in \cA(\theta(\bD^d))$ and $h\in \cN$. Similarly, we define $\cK_*$. Since $\cN$ is a normal Hilbert module over $C(\partial \theta(\bD^d))$, so is $\cN_*$.

Again, by a duality argument of the Category theory we can see that $\operatorname{Ext}_{\mathfrak{N}} (\cN, \cK) \equiv \operatorname{Ext}_{\overline{\mathfrak{N}}} (\cK_*, \cN_*)$ as groups. Therefore, $\operatorname{Ext}_{\overline{\mathfrak{N}}} (\cK_*, \cN_*) =0$ and hence $\operatorname{Ext}_{\mathfrak{N}} (\cN, \cK) = 0$. 
\end{proof}
\begin{corollary}
Every normal Hilbert module over $C(\partial \theta(\bD^d))$ is a projective object in the category of normal Hilbert modules over $\cA(\theta(\bD^d))$.
\end{corollary}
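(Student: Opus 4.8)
The plan is to read this corollary off from \cref{NormalExt} together with the cohomological description of projectivity. First I would recall the characterization established after \cref{cohom}: a Hilbert module $\cP$ is projective in a given category precisely when its extension group $\operatorname{Ext}(\cP, \cH)$ vanishes for every module $\cH$ in that category. As noted in the discussion opening \cref{normal_cat}, the analogue of \cref{cohom} holds verbatim for the normal category $\mathfrak{N}$, so that $\operatorname{Ext}_{\mathfrak{N}}(\cP, \cH) = \mathfrak{A}_{\mathfrak{N}}/\mathfrak{B}_{\mathfrak{N}}$, and the same equivalence between projectivity and the vanishing of $\operatorname{Ext}_{\mathfrak{N}}(\cP, \cdot)$ is available. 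Thus it suffices to exhibit the relevant Ext group as trivial.

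Next I would set $\cP = \cN$, where $\cN$ is the given normal Hilbert module over $C(\partial \theta(\bD^d))$, viewed as a Hilbert module over $\cA(\theta(\bD^d))$ by restriction. The hypotheses of \cref{NormalExt} are then met exactly: $\cN$ carries a $C(\partial \theta(\bD^d))$-module structure and is normal over $\cA(\theta(\bD^d))$. Applying the theorem, and in particular its second conclusion, I obtain $\operatorname{Ext}_{\mathfrak{N}}(\cN, \cK) = 0$ for every normal Hilbert module $\cK$ over $\cA(\theta(\bD^d))$. By the projectivity criterion recalled above, this is precisely the assertion that $\cN$ is a projective object in $\mathfrak{N}$, which is what the corollary claims.

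In truth this corollary is a formal consequence, so there is no genuine obstacle once \cref{NormalExt} is in hand; the only point deserving care is the bookkeeping that matches the two conclusions of \cref{NormalExt} to the correct variance. Projectivity of $\cN$ is governed by $\operatorname{Ext}_{\mathfrak{N}}(\cN, \cdot)$, with $\cN$ in the first slot, which is the second vanishing statement in \cref{NormalExt}; I would be sure to invoke that one. The companion equality $\operatorname{Ext}_{\mathfrak{N}}(\cK, \cN) = 0$ instead expresses injectivity of $\cN$ and is not needed here, although it records the pleasant fact that such modules are simultaneously projective and injective in $\mathfrak{N}$.
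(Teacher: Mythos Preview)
Your argument is correct and is exactly the deduction the paper intends: the corollary is stated without proof immediately after \cref{NormalExt}, as an immediate consequence of the vanishing $\operatorname{Ext}_{\mathfrak{N}}(\cN,\cK)=0$ together with the Ext-characterization of projectivity recalled after \cref{cohom}. Your care in selecting the second conclusion of \cref{NormalExt} (with $\cN$ in the first slot) is precisely the right bookkeeping.
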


\vspace*{5mm}

\noindent \textbf{Funding:} \\
	{\footnotesize{This research is supported by the J C Bose Fellowship  JCB/2021/000041 of SERB, the Prime Minister's Research Fellowship PM/MHRD-21-1274.03 and the DST FIST program-2021 [TPN-700661].}}

\end{document}